\documentclass[12pt, a4paper]{amsart}
\usepackage[utf8]{inputenc}
\usepackage{amsthm}
\usepackage{mathrsfs}  
\usepackage[none]{hyphenat}
\usepackage[english]{babel}
\usepackage{setspace}
\usepackage{longtable}
\usepackage{multicol}
\usepackage{multirow}
\usepackage[a4paper, top=3cm, bottom=4cm,left=3cm,right=3cm]{geometry}
\usepackage{setspace}
\usepackage{comment}
\usepackage{caption}
\usepackage{adjustbox}

\usepackage{pgf,tikz}
\usepackage{microtype}

\usepackage{tikz-cd}
\usepackage{tikz,pgfplots}
\usetikzlibrary{positioning, angles}
\usetikzlibrary{arrows}
\usetikzlibrary{decorations.pathmorphing, decorations.pathreplacing}
\usetikzlibrary{decorations.markings}
\usetikzlibrary{calc,shapes}
\usetikzlibrary{decorations.markings}
\usepackage{mathtools}
\usepackage{lipsum}
\usepackage{comment}
\usepackage{float}    
\usetikzlibrary{shapes.geometric}

\let\phi\varphi

\usepackage{amsmath, amsthm, amssymb, amsfonts, xfrac}
\newtheorem{prop}{\textsc{Proposition}}[section]
\newtheorem{cor}[prop]{\textsc{Corollary}}
\newtheorem{thm}[prop]{\textsc{Theorem}}
\newtheorem*{thm*}{\textsc{Theorem}}
\newtheorem*{prop*}{\textsc{Proposition}}

\theoremstyle{definition}
\newtheorem{defn}[prop]{\textsc{Definition}}
\newtheorem{ex}[prop]{\textsc{Example}}

\newtheorem{remark}[prop]{\textsc{Remark}}

\newtheorem{lemma}[prop]{\textsc{Lemma}}

\newtheorem{question}{Open Question}

\usepackage{graphicx}
\newcommand{\Z}{\mathbb{Z}}
\newcommand{\AC}{\widetilde{\mathcal{AC}}}
\newcommand{\A}{\mathcal{A}^{\Z}}

\newcommand{\Q}{\mathbb{Q}}
\newcommand{\cc}{\mathcal{C}}
\newcommand{\Sy}{\mathcal{S}}
\newcommand{\W}{W}
\newcommand{\qb}{\mathfrak{qb}}

\newcommand{\R}{\mathbb{R}}
\newcommand{\F}{\mathbb{F}}
\newcommand{\G}{\mathcal{G}}
\newcommand{\C}{\widetilde{\mathcal{C}}}

\DeclareMathOperator{\Hom}{Hom}

\DeclareMathOperator{\rank}{rank}

\DeclareMathOperator{\lk}{lk}

\DeclareMathOperator{\genus}{genus}
\DeclareMathOperator{\im}{im}

\DeclareMathOperator{\sign}{sign}

\DeclareMathOperator{\SO}{SO}

\DeclareMathOperator{\Fix}{Fix}
\DeclareMathOperator{\tr}{tr}
\DeclareMathOperator{\id}{id}

\let\int\relax 
\DeclareMathOperator{\int}{int}


\usepackage{hyperref}

\title[Equivariant algebraic concordance of strongly invertible knots]{Equivariant algebraic concordance\\of strongly invertible knots}

\author{Alessio Di Prisa}
\address{Scuola Normale Superiore, Pisa, Italy \vskip.05in}
\email{\url{alessio.diprisa@sns.it}}
\urladdr{\url{https://sites.google.com/view/alessiodiprisa}}
\begin{document}
\maketitle

\begin{abstract}
By considering a particular type of invariant Seifert surfaces we define a homomorphism $\Phi$ from the (topological) equivariant concordance group of directed strongly invertible knots $\C$ to a new equivariant algebraic concordance group $\widetilde{\G}^\Z$.
We prove that $\Phi$ lifts both Miller and Powell's equivariant algebraic concordance homomorphism \cite{miller_powell} and Alfieri and Boyle's equivariant signature \cite{alfieri2021strongly}.
Moreover, we provide a partial result on the isomorphism type of $\widetilde{\G}^\Z$ and obtain a new obstruction to equivariant sliceness, which can be viewed as an equivariant Fox-Milnor condition. We define new equivariant signatures and using these we obtain novel lower bounds on the equivariant slice genus.
Finally, we show that $\Phi$ can obstruct equivariant sliceness for knots with Alexander polynomial one.
\end{abstract}

\section{Introduction}

A knot $K\subset S^3$ is said to be \emph{invertible} if there is an orientation-preserving homeomorphism $\rho$ of $S^3$ such that $\rho(K)=K$ and $\rho$ reverses the orientation on $K$. If such a homeomorphism can be taken to be a locally linear involution, we say that $K$ is \emph{strongly invertible}. Kawauchi \cite[Lemma 1]{kawauchi1979invertibility}proved that the two notions agree for hyperbolic knots, however, there exist examples of invertible knots which are not strongly invertible, see \cite[Section 5]{hartley1980knots}.

Since such an involution for a strongly invertible knot has always a non-empty fixed-point set, we know that it is always conjugate to an element of $\SO(4)$ by the solution of the Smith conjecture \cite{smith}.
As a consequence, we can think of a strongly invertible knot as a knot that is invariant under a $\pi$-rotation around some unknotted axis in $S^3$. The knot intersects the axis in two points, by which the axis is separated into two half-axes.

In \cite{sakuma} Sakuma defined a notion of \emph{direction} on a strongly invertible knot, which consists of an orientation of the axis of the involution together with the choice of one of the half-axes. Using this additional structure he was able to define unambiguously an operation of \emph{equivariant connected sum} between \emph{directed strongly invertible knots}.

Since strongly invertible knots are naturally equipped with an involution, it is natural to ask whether a strongly invertible slice knot is also \emph{equivariantly slice}, i.e.~it bounds a locally flat slice disk in $B^4$ which is invariant under a locally linear extension of the involution.
Similarly to the classical case, this leads to the definition of the \emph{equivariant concordance group} $\C$ as the set of classes of directed strongly invertible knots up to an appropriate definition of \emph{equivariant concordance}.
Recently in \cite{diprisa} we proved that $\C$ is not abelian\footnote{The main result \cite[Theorem 1.1.]{diprisa} of the paper relies on Donaldson's diagonalization theorem, which is inherently a smooth result. However, in \cite[Section 3.1]{diprisa} we give an alternate proof that $\C$ is not abelian, relying on twisted Alexander polynomials, which can be adapted in the topological category.}, which is in stark contrast with the classical concordance group.

Several authors \cite{sakuma,boyle2021equivariant,alfieri2021strongly,dai_mallick_stoffregen,miller_powell,diprisa_framba} have found invariants and obstructions for the equivariant concordance of strongly invertible knots.
In particular, in \cite{dai_mallick_stoffregen} the authors define several invariants for \emph{smooth} equivariant concordance using knot Floer homology. Using the lower bounds on the \emph{smooth equivariant slice genus} $\widetilde{g}_4^s$ provided by these invariants, they construct the first examples of strongly invertible knots with $\widetilde{g}_4^s(K)-g_4^s(K)$ arbitrarily large, where $g_4^s(K)$ is the classical smooth slice genus, answering \cite[Question 1.1]{boyle2021equivariant}.

Miller and Powell, in \cite{miller_powell} introduce a notion of \emph{equivariant algebraic concordance}, by studying the action of the strong inversion on the Blanchfield pairing on the Alexander module of a strongly invertible knot.
In this way, they define a homomorphism
$$\Psi:\C\longrightarrow\AC$$
from the equivariant concordance group to an \emph{equivariant algebraic concordance group} $\AC$ of \emph{equivariant Blanchfield pairings}. From the equivariant Blanchfield pairings they obtain new lower bounds on the equivariant slice genus, and they provide examples of genus one slice knots with arbitrarily large \emph{topological} equivariant slice genus $\widetilde{g}_4$.

In \cite{boyle2021equivariant,alfieri2021strongly} the authors define an equivariant version of the classical knot signature for directed strongly invertible knots, obtaining a group homomorphism $$\widetilde{\sigma}:\C\longrightarrow\Z.$$

In this paper we define a notion of \emph{equivariant algebraic concordance} for directed strongly invertible knots, analogous to Levine's algebraic concordance \cite{Levine1969a,Levine1969b}, by considering a particular \emph{type} of \emph{invariant Seifert surfaces}. In Theorem \ref{en_alg_conc} we construct a homomorphism $\Phi$ from the equivariant concordance group $\C$ to an \emph{equivariant algebraic concordance group} $\widetilde{\G}^\Z$ of \emph{equivariant Seifert systems}.

The homomorphism $\Phi$ is able to detect valuable information about equivariant concordance. For instance, it can be used to obstruct equivariant sliceness of knots with Alexander polynomial one, see Example \ref{ex:alexander_one}.
However, it seems a hard task to give a complete description of the group structure of $\widetilde{\G}^\Z$.
In Section \ref{sect:blanchfield} we introduce a more manageable quotient $\widetilde{\G}^\Z_r$ of this group that we call \emph{reduced equivariant concordance group} and we denote by $\Phi_r: \C\longrightarrow \widetilde{\G}^\Z_r$ the homomorphism obtained by composition.

\pagebreak
The main result of this paper is the following theorem, which sums up the results of Theorems \ref{g_to_ac} and \ref{equivalent_definition}.
\begin{thm}
The homomorphism $\Psi$ and the equivariant signature $\widetilde{\sigma}$ factor through $\widetilde{\G}^\Z_r$, i.e. they fit in the following commutative diagram.
\begin{center}
    \begin{tikzcd}
    &\C\ar[d,"\Phi_r"]\ar[rd,"\Psi"]\ar[ld,swap,"\widetilde{\sigma}"]&\\
    \Z&\widetilde{\G}^\Z_r\ar[l]\ar[r]&\AC
    \end{tikzcd}
\end{center}
\end{thm}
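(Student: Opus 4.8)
The statement combines Theorems~\ref{g_to_ac} and \ref{equivalent_definition}, so the plan is to construct the two horizontal arrows $\widetilde{\G}^\Z_r\to\Z$ and $\widetilde{\G}^\Z_r\to\AC$ and to check that the two resulting triangles commute; the vertical map $\Phi_r$ is already in hand, being $\Phi$ from Theorem~\ref{en_alg_conc} followed by the quotient construction of Section~\ref{sect:blanchfield}. I would work throughout at the level of \emph{equivariant Seifert systems}: a class in $\widetilde{\G}^\Z_r$ is represented by a Seifert matrix $V$ together with the linear-algebraic data recording the induced involution, taken modulo the equivariant analogues of congruence, stabilisation and passage to a metabolic quotient. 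Each of the three maps in the diagram is then defined by plugging such a system into a classical construction, suitably enhanced by the involution data.

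For the arrow to $\AC$ I would use the usual functor from a Seifert matrix $V$ to a Blanchfield pairing on $\coker\bigl(tV-V^{T}\colon\Lambda^{2g}\to\Lambda^{2g}\bigr)$ over $\Lambda=\Q[t^{\pm1}]$, with values in $\Q(t)/\Lambda$, and transport the involution of the Seifert system to an isometry of this pairing, obtaining an equivariant Blanchfield pairing. Three things must be checked. First, \emph{well-definedness}: the relations defining $\widetilde{\G}^\Z_r$ must be sent to the trivial class of $\AC$; the crucial point is that a metaboliser for the equivariant Seifert form induces a metaboliser for the associated equivariant Blanchfield pairing, while equivariant $S$-equivalence does not change the pairing at all. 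Second, \emph{additivity}, which is immediate since equivariant connected sum of Seifert systems corresponds to block sum of matrices and hence to orthogonal sum of equivariant Blanchfield pairings. Third, \emph{compatibility with $\Psi$}: for an invariant Seifert surface $F$ of the distinguished type for a directed strongly invertible knot $K$, there is an isomorphism $H_1(S^3\setminus K;\Lambda)\cong\coker(tV-V^{T})$ carrying the Blanchfield pairing and the $\rho$-action of $K$ to their algebraic counterparts, which forces $\Psi=\bigl(\widetilde{\G}^\Z_r\to\AC\bigr)\circ\Phi_r$.

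For the arrow to $\Z$ I would first define the equivariant signature of an equivariant Seifert system by an explicit formula — a signature built from $V$, $V^{T}$ and the involution matrix after restriction to the relevant eigenspaces, or equivalently from the half-surface pieces cut off by the axis — and verify directly that it is additive and unchanged by the moves generating $\widetilde{\G}^\Z_r$, hence descends to a homomorphism $\widetilde{\G}^\Z_r\to\Z$. It then remains to match this with the Alfieri--Boyle equivariant signature $\widetilde{\sigma}$; by additivity it is enough to check this on a single directed strongly invertible knot, where one compares the algebraic recipe applied to an admissible invariant Seifert surface with Boyle's original definition (both computing, in the end, a $G$-signature of an appropriate branched or cyclic cover). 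This comparison is precisely the content of Theorem~\ref{equivalent_definition}.

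I expect the main difficulty to lie in the well-definedness claims rather than in additivity or commutativity. One needs to know that distinguished invariant Seifert surfaces exist and that any two are related by \emph{equivariant} stabilisations and handle slides, and that the prescribed, bounded way in which such a surface meets the axis makes all of the above constructions canonical; this is the equivariant refinement of the classical uniqueness of Seifert surfaces up to $S$-equivalence. A secondary subtlety, affecting only the identification of the two equivariant signatures, is the careful bookkeeping of orientations and of the chosen half-axis, since the strong inversion reverses the orientation of the knot.
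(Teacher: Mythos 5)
Your proposal follows essentially the same route as the paper: the arrow to $\AC$ is the algebraic Blanchfield construction $A\mapsto\coker(tA-A^t)$ with the transported involution $x\mapsto P\overline{x}$ (Theorem \ref{g_to_ac}), and the arrow to $\Z$ is the signature of the symmetrized form split along the $\pm1$-eigenspaces of the involution, identified with the Alfieri--Boyle $g$-signature of the double branched cover via the lattice isomorphism and the vanishing of the relative Euler number (Definition \ref{equiv_sign_witt} and Theorem \ref{equivalent_definition}). The one superfluous worry is your appeal to uniqueness of invariant Seifert surfaces up to equivariant stabilisation: since $\widetilde{\G}^\Z_r$ is a Witt-type group and well-definedness of the vertical map is already Theorem \ref{en_alg_conc}, the horizontal arrows only need to send equivariantly metabolic forms to trivial classes, which you also check.
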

Moreover, in Theorem \ref{thm:grp_str} we obtain a partial result on the structure of $\widetilde{\G}^\Z$, determining the isomorphism type of $\widetilde{\G}_r^\Q$ which is the defined similarly to $\widetilde{\G}_r^\Z$, by using \emph{rational} equivariant Seifert forms. In particular, we have a natural inclusion $\widetilde{\G}_r^\Z\hookrightarrow\widetilde{\G}_r^\Q$ (see Lemma \ref{lemma:inclusion}) and the following theorem holds.
\begin{thm*}[\textbf{\ref{thm:grp_str}}]
The isomorphism type of the reduced rational equivariant algebraic concordance group is given by
$$
\widetilde{\G}_r^\Q\cong\Z^\infty\oplus\Z/2\Z^\infty\oplus\Z/4\Z^\infty\oplus\Z/8\Z^\infty.
$$
\end{thm*}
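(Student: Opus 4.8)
The plan is to reduce the statement to a Witt-group computation in the spirit of Levine's classification of the rational algebraic concordance group \cite{Levine1969a,Levine1969b}, carrying along the extra involution coming from the strong inversion. Using the constructions of Section~\ref{sect:blanchfield} (together with Lemma~\ref{lemma:inclusion} and the identification of $\widetilde{\G}_r^\Z$ with the reduced Seifert data), I would first identify $\widetilde{\G}_r^\Q$ with the Witt group of \emph{equivariant rational linking forms}: a finite-dimensional torsion $\Q[t^{\pm1}]$-module $M$ with a nonsingular $\Q(t)/\Q[t^{\pm1}]$-valued linking form $\lambda$ that is hermitian for $t\mapsto t^{-1}$, together with an order-two automorphism $s$ of $M$ that is semilinear over $t\mapsto t^{-1}$ and satisfies $\lambda(sx,sy)=\overline{\lambda(x,y)}$ --- equivalently, a nonsingular linking form over the infinite dihedral group ring $\Q[t^{\pm1}]\rtimes\Z/2\Z$ --- the equivalence relation being equivariant metabolicity.

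The second step is a devissage. Since $\Q[t^{\pm1}]$ is a PID, $M$ splits canonically into $\mathfrak p$-primary summands, and both $t\mapsto t^{-1}$ and $s$ permute the maximal ideals; as $\lambda$ is nonsingular the decomposition into orbits is orthogonal, so
\[
\widetilde{\G}_r^\Q\ \cong\ \bigoplus_{\mathcal O} W_{\mathcal O},
\]
the sum over the $\langle t\mapsto t^{-1}\rangle$-orbits $\mathcal O$ of maximal ideals, with $W_{\mathcal O}$ the Witt group of equivariant linking forms supported on $\mathcal O$. A Milnor-type devissage inside each $W_{\mathcal O}$ then reduces it to a Witt group of hermitian forms over the semisimple $\Q$-algebra-with-involution assembled from the residue fields of $\mathcal O$: up to Morita equivalence this algebra is a number field with trivial involution (the orbits $\{t=1\},\{t=-1\}$), or a number field $K$ with nontrivial involution and fixed field $K_0$, in which case the crossed product $K\rtimes\Z/2\Z$ coming from $s$ has trivial cocycle and is isomorphic to $M_2(K_0)$, Morita-reducing to a hermitian Witt group over $K_0$, or a split product $L\times L$ on which the two involutions act so as to Morita-collapse to an ordinary symmetric Witt group over $L$. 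In every case $W_{\mathcal O}$ is a Witt group of $\varepsilon$-hermitian forms over one of a short list of number fields equipped with at most two commuting involutions.

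Each such local Witt group is then computed through its completions. By the Hasse--Minkowski and Milnor exact sequences for Witt groups of number fields, $W_{\mathcal O}$ is a finite direct sum of a free part detected by signatures at the real places and a finite abelian $2$-group of exponent at most $8$ detected by discriminants and Arf/Brown-type invariants at the finite places. The free contributions are the equivariant signatures introduced in this paper, which refine Alfieri--Boyle's $\widetilde{\sigma}$ \cite{alfieri2021strongly}; there are infinitely many orbits carrying one, producing $\Z^\infty$. For the ``knot part'' of a finite piece --- the hermitian datum seen by Miller--Powell's $\Psi$ \cite{miller_powell} --- one recovers exactly Levine's cyclic groups of order $2$ or $4$; the additional involution $s$ refines this by the $\Z/2\Z$ recording the Witt class of $\lambda$ on the $s$-fixed sublattice, and in one distinguished family of orbits this refinement is a \emph{non-split} extension, yielding a cyclic group of order $8$ --- concretely a Gauss-sum/Brown-type invariant valued in $\Z/8\Z$. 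Since each of the four isomorphism types $\Z,\Z/2\Z,\Z/4\Z,\Z/8\Z$ occurs for infinitely many orbits (by elementary density of primes), assembling the above gives $\widetilde{\G}_r^\Q\cong\Z^\infty\oplus\Z/2\Z^\infty\oplus\Z/4\Z^\infty\oplus\Z/8\Z^\infty$. Surjectivity onto each summand is obtained from equivariant connected sums of explicit model equivariant Seifert systems supported on a single orbit, and injectivity is precisely the completeness of the invariants produced by the devissage.

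The step I expect to be the main obstacle is the precise bookkeeping of the two commuting involutions in the local pieces: pinning down which orbit types carry the $\Z/8\Z$-invariant, proving that the relevant extension of Witt groups is genuinely non-split --- so the summand is $\Z/8\Z$ and not $\Z/4\Z\oplus\Z/2\Z$ or $(\Z/2\Z)^{3}$ --- and establishing a uniform exponent bound that rules out $\Z/16\Z$ or larger. This amounts to a hands-on discriminant and second-residue computation for the distinguished family, together with an estimate valid uniformly over all orbits.
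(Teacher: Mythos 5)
Your overall skeleton (orbit/primary decomposition, devissage, reduction to Witt groups of number fields, classical computation of those Witt groups) runs parallel to the paper's, but there are two genuine gaps. The first is your opening step: you identify $\widetilde{\G}_r^\Q$ with a Witt group of equivariant rational Blanchfield-type linking forms, i.e.\ a rationalized version of Miller--Powell's $\AC$. This identification fails, and the paper itself contains the counterexample: by Remark \ref{equiv_signature_direction} the class of $7_4b^+\widetilde{\#}m7_4b^-$ is trivial in $\AC$ (the equivariant Blanchfield pairing cannot see the choice of half-axis, essentially because a submodule is $\rho$-invariant if and only if it is $(-\rho)$-invariant), yet it has nonzero equivariant signature and hence nonzero image in $\widetilde{\G}_r^\Q$. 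So the passage from equivariant Seifert forms to equivariant linking forms is not injective on Witt groups, and Theorem \ref{g_to_ac} only provides a homomorphism $\widetilde{\G}_r^\Z\to\AC$, not an isomorphism. Any proof along your lines would have to replace the linking-form model by something strictly finer (your parenthetical ``linking form over the infinite dihedral group ring'' might be such an object, but it is not equivalent to the structure you actually wrote down, and the subsequent devissage would then take place over a noncommutative ring). The paper avoids this entirely by staying with the Seifert form: it converts an equivariant Seifert form $(V,\theta,\rho)$ into a \emph{symmetric structure} $(V_+,\beta|_{V_+},S)$, where $V_+$ is the $(+1)$-eigenspace of $\rho$, $\beta=\theta+\theta^t$, and $S=T^2$ with $T=(\theta+\theta^t)^{-1}(\theta-\theta^t)$; this is shown to be an isomorphism onto the group $\G_{sym}$ of such structures, after which the primary decomposition (Theorem \ref{thm:p_decomp}), exponent reduction (Proposition \ref{prop:exp_red}) and a trace-form argument (Theorem \ref{thm:p_comp_witt}) give $\widetilde{\G}_r^\Q\cong\bigoplus_p W(\Q[s]/(p(s)))$ over all monic irreducible $p\neq s$.

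The second gap is that the decisive torsion computation --- precisely the part you flag as ``the main obstacle'' --- is left unresolved, and the mechanism you propose for the $\Z/8\Z$ summands is not the right one. You describe the order-$8$ classes as arising from a non-split extension of Levine's $\Z/4\Z$ by a $\Z/2\Z$ recording data on the $s$-fixed sublattice. In fact, after the paper's devissage each local piece is an \emph{ordinary} symmetric Witt group $W(\F)$ of a number field $\F=\Q[s]/(p(s))$ with \emph{trivial} involution (this is exactly what the extra involution buys: it trades Levine's hermitian Witt groups, which have exponent dividing $4$, for symmetric ones), and $W(\F)$ classically contains elements of order $8$ whenever $\F$ has level $4$, e.g.\ for suitable totally imaginary fields; this is the content of the reference to \cite{milnor1973symmetric}. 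Also note that the relevant index set is \emph{all} irreducible $p(s)\neq s$, which via the substitution $\delta$ of Definition \ref{def:poly_transf} includes contributions from non-symmetric irreducible factors $g(t)g(t^{-1})$ of the Alexander polynomial --- a point your orbit bookkeeping gestures at but does not pin down, and which is what ultimately powers Theorem \ref{thm:fox_milnor}. Without repairing step one and actually carrying out the level-$4$ computation, the proposal does not establish the theorem.
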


The same arguments used in the proof of Theorem \ref{thm:grp_str} fail to be easily adapted to study the (unreduced) group $\widetilde{\G}^\Z$. Therefore, we propose the following open problem that we would like to address in the future.

\begin{question}\label{question:grp_str}
Determine the full isomorphism type of the equivariant algebraic concordance group $\widetilde{\G}^\Z$.
\end{question}

As pointed out in Remark \ref{rem:non_surj} the composite map $\C\longrightarrow\widetilde{\G}_r^\Q$ is not surjective, since $\widetilde{\G}^Z_r$ is a proper subgroup. Hence we ask the following question.

\begin{question}
Determine the image of $\C$ in $\widetilde{\G}_r^\Q$. In particular, does there exist a directed strongly invertible knot $K$ whose image has order $8$ in $\widetilde{\G}_r^\Q$?
\end{question}

As a consequence of the investigation on the group structure of $\widetilde{\G}_r^\Q$, in Theorem \ref{thm:fox_milnor} we get the following obstruction to equivariant sliceness, which can be seen as an \emph{equivariant Fox-Milnor condition}.

\begin{thm*}[\textbf{\ref{thm:fox_milnor}}]
Let $K$ be a strongly invertible knot and let $\Delta_K(t)$ be its Alexander polynomial, normalized so that $\Delta_K(t)=\Delta_K(t^{-1})$ and $\Delta_K(1)=1$. If $K$ is equivariantly slice then $\Delta_K(t)$ is a square.
\end{thm*}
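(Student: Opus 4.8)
The plan is to deduce the statement from the homomorphism $\Phi$ together with the analysis of $\widetilde{\G}_r^\Q$ behind Theorem~\ref{thm:grp_str}. Fix an arbitrary direction on $K$; since equivariant sliceness and $\Delta_K$ do not depend on it, it suffices to argue for the resulting directed strongly invertible knot. If $K$ is equivariantly slice it represents $0$ in $\C$, so $\Phi(K)=0$ in $\widetilde{\G}^\Z$, and composing with the projection to $\widetilde{\G}^\Z_r$ and the inclusion $\widetilde{\G}^\Z_r\hookrightarrow\widetilde{\G}^\Q_r$ of Lemma~\ref{lemma:inclusion} we get $\Phi_r(K)=0$ in $\widetilde{\G}^\Q_r$. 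Unwinding this, the rational equivariant Seifert form $(V,\tau)$ of an invariant Seifert surface $F$ of the prescribed type is equivariantly metabolic, possibly after stabilizing by equivariant metabolic forms: there is a $\tau$-invariant subspace $L\subseteq H_1(F;\Q)$ with $V|_L=0$ and $\dim L=\tfrac12\dim H_1(F;\Q)$. The reason $\tau$ preserves $L$ is that an invariant slice disk closes $F$ up to an invariant closed surface bounding an invariant $3$-manifold $\Omega\subset B^4$, whence $L=\ker\big(H_1(F;\Q)\to H_1(\Omega;\Q)\big)$ is $\tau$-invariant; half-dimensionality is the usual half-lives--half-dies argument.

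The core of the proof is then a short computation. A basis adapted to $L$ puts $V$ in block form $\left(\begin{smallmatrix}0 & A\\ B & C\end{smallmatrix}\right)$, and, since $\tau(L)=L$, makes $\tau$ block upper triangular $\left(\begin{smallmatrix}\tau_1 & \tau_2\\ 0 & \tau_4\end{smallmatrix}\right)$ with $\tau_1^2=\tau_4^2=\mathrm{id}$. Feeding this into the defining relation of an equivariant Seifert system — the strong inversion reverses the boundary orientation of $F$, hence conjugates $V$ to $V^T$, a relation the direction data does not affect — produces the off-diagonal identity $B^T=\tau_1^T A\,\tau_4$. Substituting it into the Fox--Milnor factor $p(t):=\det(tA-B^T)$ and pulling the invertible matrices $\tau_1,\tau_4$ out of the determinant gives $p(t)\doteq t^{g}p(t^{-1})$, i.e.\ $p$ is self-conjugate up to units. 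Since $\Delta_K(t)\doteq p(t)\,p(t^{-1})$ by the classical formula for the Alexander polynomial of a metabolic Seifert matrix, we conclude $\Delta_K(t)\doteq p(t)^2$. The same conclusion is available on the Blanchfield side, closer in spirit to Section~\ref{sect:blanchfield}: an invariant slice disk yields a $\tau$-invariant metabolizer $P$ of the Alexander module on which the strong inversion acts $\Z[t,t^{-1}]$-semilinearly for $t\mapsto t^{-1}$, so $P\cong\overline P$, its order ideal is self-conjugate, and $\Delta_K$ is, up to units, the square of one of its generators.

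It remains to pass from ``up to units'' and from a possibly stabilized form to the literal assertion. Equivariant metabolic forms have Alexander polynomial of the shape $r(t)r(t^{-1})$ with $r\doteq\bar r$ by the computation just performed, so cancelling the stabilizing factors in the unique factorization domain $\Q[t,t^{-1}]$ — comparing multiplicities of the symmetric irreducibles and of the conjugate pairs of non-symmetric irreducibles on both sides — shows every irreducible factor of $\Delta_K$ occurs to even total multiplicity; hence $\Delta_K$ is a square in $\Q[t,t^{-1}]$. With the normalization $\Delta_K(1)=1$ the polynomial is primitive and takes a positive value, so a Gauss-lemma argument upgrades this to a square in $\Z[t,t^{-1}]$. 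Equivalently, one can read the claim straight off the proof of Theorem~\ref{thm:grp_str}: were $\Delta_K$ not a square, some symmetric irreducible — or some conjugate pair of non-symmetric irreducibles — would divide it to odd order, producing a nonzero summand of the class of $K$ in $\widetilde{\G}^\Q_r$ and contradicting $\Phi_r(K)=0$.

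The step I expect to be the main obstacle is the first one: making precise, under the paper's conventions, the sense in which equivariant sliceness yields a single genuinely $\tau$-invariant metabolizer. One must check that the invariant slice disk completes to an invariant (locally flat) bounding $3$-manifold, that the ``type'' hypothesis on $F$ is exactly what makes the induced $\tau$ an honest involution satisfying the $V\leftrightarrow V^{T}$ relation used above, and that the equivalence relation defining $\widetilde{\G}^\Z$ can be run equivariantly, so that invariance of the metabolizer survives stabilization rather than holding only up to concordance. Everything afterwards is bookkeeping in a UFD.
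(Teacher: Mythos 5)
Your proposal is correct, but its main argument takes a genuinely different route from the paper's. The paper deduces Theorem \ref{thm:fox_milnor} from the structure theory of Section \ref{sect:grp_str}: it passes to the associated symmetric structure, invokes the splitting $\widetilde{\G}_r^\Q\cong\bigoplus_p W(\Q[s]/(p(s)))$ of Theorem \ref{thm:grp_str}, and argues by contraposition that an irreducible factor $g(t)$ of $\Delta_K(t)$ of odd multiplicity is fatal: if $g$ is symmetric this already violates the classical Fox--Milnor condition, and if not, then $p(s)=\delta(g(t)g(t^{-1}))(s)$ is irreducible and divides $\delta_K(s)$ to odd order, so the projection of $\Phi_r(K)$ to $W(\Q[s]/(p(s)))$ is an odd-rank, hence Witt-nontrivial, form. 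Your core argument is instead a direct block-matrix ``equivariant Fox--Milnor'' computation: the $\rho$-invariance of the metabolizer (supplied by Theorem \ref{en_alg_conc} via Proposition \ref{alg_slice_link}) turns the defining relation $P^TVP=V^T$ into the off-diagonal identity $B^T=\tau_1^TA\tau_4$, and since $\tau_1^2=\tau_4^2=\id$ one gets $p(t)=\det(tA-B^T)=\pm\det(tB^T-A)=\pm(-t)^g p(t^{-1})$, so the classical Fox--Milnor factor is self-conjugate and $\Delta_K\doteq p(t)p(t^{-1})\doteq p(t)^2$; I checked this computation and it is right, and your stabilization and UFD bookkeeping close the remaining gaps. (Your closing ``equivalently'' remark is essentially the paper's actual proof.) The only step left slightly implicit is that the residual unit $t^m$ in $\Delta_K\doteq q(t)^2$ is an even power of $t$; this follows from the symmetry of $\Delta_K$ together with $\Delta_K(-1)\neq 0$, which forces $q$ to be palindromic of even degree. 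Your approach is more elementary and self-contained, producing the self-conjugate factorization directly from the Seifert matrix; the paper's is essentially free once Theorem \ref{thm:grp_str} is established and has the advantage of localizing the obstruction in an explicit Witt summand, which is what feeds the signature-jump bounds of Section \ref{sect:new_equiv_sign}.
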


Theorem \ref{thm:fox_milnor} is especially fascinating due to its intriguing resemblance to a result of Hartley and Kawauchi \cite{hartley1979polynomials} which states that if a knot $K$ is \emph{strongly positive amphichiral} then $\Delta_K(t)$ is a square. While at the moment we are not able to formulate a precise conjecture, it would be interesting to understand better the relation between being equivariantly slice and strongly positive amphichiral for a strongly invertible knot.

The results of Section \ref{sect:grp_str} suggest naturally the definition of new \emph{equivariant signatures} $\{\widetilde{\sigma}_\lambda\}_{\lambda\in\R}$ and \emph{equivariant signature jumps} $\{\widetilde{J}_\lambda\}_{\lambda\in\R}$, which are homomorphisms
$$
\widetilde{\sigma}_\lambda,\;\widetilde{J}_\lambda:\C\longrightarrow\Z
$$
that we introduce in Section \ref{sect:new_equiv_sign}.
In Proposition \ref{prop:levine_tristram} we clarify the relation between $\widetilde{\sigma}_\lambda$, the Levine-Tristram signatures $\sigma_\omega$ proving the following.
\begin{prop*}[\textbf{\ref{prop:levine_tristram}}]
Let $K$ be a directed strongly invertible knot. Then for any $\lambda\leq 0$ and $\omega\in S^1$ such that $\lambda(\omega-1)^2=(\omega+1)^2$ we have
$$
\widetilde{\sigma}_\lambda(K)=\sigma_\omega(K),
$$
\end{prop*}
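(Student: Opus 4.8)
The plan is to compare two quadratic forms attached to the same knot: the equivariant signature $\widetilde{\sigma}_\lambda(K)$, which (by the construction in Section \ref{sect:new_equiv_sign}) is the signature of some Hermitian form built out of an equivariant Seifert matrix $V$ together with the extra data recording the strong inversion, evaluated at the parameter $\lambda\in\R$; and the Levine--Tristram signature $\sigma_\omega(K)$, which is the signature of $(1-\omega)V+(1-\bar\omega)V^{T}$ for a (possibly non-equivariant) Seifert matrix. Since an equivariant Seifert surface is in particular a Seifert surface, one can use \emph{the same} surface $F$ to compute both invariants, so both forms live on $H_1(F;\R)$ (or a subspace thereof in the equivariant setting) and the problem becomes a linear-algebra identity between the two pencils of symmetric/Hermitian matrices, specialized under the stated substitution $\lambda(\omega-1)^2=(\omega+1)^2$.

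First I would recall the precise definition of $\widetilde{\sigma}_\lambda$ from Section \ref{sect:new_equiv_sign} and write down the matrix $M(\lambda)$ whose signature it computes; the equivariant Seifert data comes with an involution, and the form defining $\widetilde{\sigma}_\lambda$ should, after diagonalizing with respect to the $\pm1$-eigenspaces of that involution, decompose into a piece depending on $\lambda$ in the combination $(1+\lambda)$-times-symmetrization and $(1-\lambda)$-times-something, which is exactly the shape that makes the substitution natural. Next I would write down the Levine--Tristram matrix $T(\omega)=(1-\omega)V+(1-\bar\omega)V^{T}$ and rescale it by the positive real number $|1-\omega|^{-2}$ (which does not change its signature): $|1-\omega|^{-2}T(\omega)=V + V^{T} + \tfrac{\omega+\bar\omega-2}{|1-\omega|^2}\,(\text{correction})$, and observe that $\tfrac{(\omega+1)^2}{(\omega-1)^2}$ is real and negative precisely when $\omega\in S^1\setminus\{1\}$, with value $\lambda\le 0$, which matches the hypothesis $\lambda\le 0$. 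Then the identity to verify is that $M(\lambda)$ and the rescaled $T(\omega)$ are congruent (over $\R$) once $\lambda=\tfrac{(\omega+1)^2}{(\omega-1)^2}$; I expect this to follow from an explicit change of basis, or alternatively from the fact that both forms are obtained by evaluating a single $\R[t,t^{-1}]$- (or $\R[x]$-) valued form at the corresponding point of the parameter line, under the rational change of variable $x=\tfrac{(t+1)^2}{(t-1)^2}$ that sends $S^1$ to $(-\infty,0]$.

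The main obstacle will be bookkeeping the role of the strong inversion: $\widetilde{\sigma}_\lambda$ is \emph{not} simply the classical signature of an equivariant Seifert matrix, but involves the interaction of $V$ with the involution-induced structure (the "equivariant Seifert system" of Theorem \ref{en_alg_conc}), so I must check that the extra equivariant terms either cancel or combine into exactly the $\lambda$-dependent correction above — this is where a clean choice of basis adapted to both the Seifert pairing and the involution is essential, and where sign conventions (orientation of the axis, choice of half-axis in the direction data) have to be tracked carefully so that the equality holds on the nose rather than up to a sign or a shift. A secondary point is to confirm that the substitution $\lambda(\omega-1)^2=(\omega+1)^2$ indeed parametrizes, for each $\lambda\le 0$, a genuine point $\omega\in S^1$ (there are two such $\omega$, complex conjugates, giving the same Levine--Tristram signature), and that the degenerate values $\omega=\pm1$, $\lambda=0$ are handled consistently with the conventions used to define $\sigma_\omega$ at roots of the Alexander polynomial; these are routine once the main congruence is in place. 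Finally I would note that, since $\Phi$ and hence $\widetilde{\sigma}_\lambda$ are concordance invariants and $\sigma_\omega$ is as well, it suffices to verify the matrix identity for a single equivariant Seifert surface of $K$, so no independence-of-surface argument is needed beyond what is already established for $\Phi_r$.
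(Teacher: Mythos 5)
You are following the paper's own route: compute both signatures from a single invariant Seifert surface, pass to a basis in which the involution is $P=\mathrm{diag}(\id,-\id)$, and exhibit a real congruence between the Hermitian matrix $A_\lambda$ of Definition \ref{def:sign_and_jumps} and a positive multiple of the Levine--Tristram matrix $(1-\omega)A+(1-\overline{\omega})A^t$. The gap is that the one step which constitutes the proof --- that congruence --- is never carried out: ``I expect this to follow from an explicit change of basis'' is the entire argument at the point where all the content sits, and it is not a formality. In the basis above one has
$$A=\begin{pmatrix}B&C\\-C^t&D\end{pmatrix},\qquad A_\lambda=2\begin{pmatrix}-\lambda B&iC\\-iC^t&D\end{pmatrix}$$
with $B,D$ symmetric, and for $\lambda<0$ the rescaling that normalizes the first block, namely congruence by $\mathrm{diag}\bigl((-\lambda)^{-1/2}\id,\id\bigr)$, produces
$$2\begin{pmatrix}B&i(-\lambda)^{-1/2}C\\-i(-\lambda)^{-1/2}C^t&D\end{pmatrix}=(A+A^t)+\tfrac{i}{\sqrt{-\lambda}}(A-A^t),$$
which is a positive multiple of $(1-\omega)A+(1-\overline{\omega})A^t$ exactly for $\omega=-\frac{1\pm i/\sqrt{-\lambda}}{1\mp i/\sqrt{-\lambda}}$, i.e.\ for $\omega$ satisfying $(\omega-1)^2=\lambda(\omega+1)^2$ --- the \emph{reciprocal} of the substitution you set up. No cleverer congruence can repair this, because $(A+A^t)+i\mu(A-A^t)$ and $(A+A^t)+i\mu^{-1}(A-A^t)$ are Levine--Tristram forms at different points of $S^1$ and in general have different signatures (already for $2\times 2$ blocks with $B=C=D=(1)$ and $\lambda=-4$ the two signatures are $2$ and $0$).

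So the computation you defer is precisely where the choice between $\lambda$ and $1/\lambda$ is decided, and the sanity checks you propose to postpone would have caught it: by Remark \ref{remark:signature_meaning}, $\widetilde{\sigma}_\lambda\to\sigma(K)=\sigma_{-1}(K)$ as $\lambda\to-\infty$ and $\widetilde{\sigma}_\lambda\to 0=\lim_{\omega\to1}\sigma_\omega(K)$ as $\lambda\to0^-$, which pairs $\lambda=-\infty$ with $\omega=-1$ and $\lambda=0$ with $\omega=1$; this is consistent with $(\omega-1)^2=\lambda(\omega+1)^2$ and inconsistent with the relation $\lambda(\omega-1)^2=(\omega+1)^2$ you are working with whenever $\sigma(K)\neq0$. (Your change of variable $x=(t+1)^2/(t-1)^2$ is likewise the reciprocal of the transform $\delta$ of Definition \ref{def:poly_transf}, which is the one whose negative real values parametrize the unitary roots of $\Delta_K$ and hence the jump loci of $\widetilde{\sigma}_\lambda$.) To turn your outline into a proof you must actually perform the block rescaling, record which power of $\sqrt{-\lambda}$ lands on the off-diagonal block, and match it against $\frac{\omega+1}{\omega-1}=i\mu$ for $\omega=-\frac{1+i\mu}{1-i\mu}$; as written, the proposal would ``verify'' whichever relation it started from, and here the relation it starts from is the wrong one.
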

While the equivariant signatures coincide with the Levine-Tristram signatures for $\lambda\leq 0$, for positive values of $\lambda$ we actually get a new invariant of equivariant concordance (see Remark \ref{rem:lin_indep}).
The main result of Section \ref{sect:new_equiv_sign} is Theorem \ref{thm:genus_bound_jump}, which gives a new lower bound on the equivariant slice genus $\widetilde{g}_4$ of a strongly invertible knot.

\begin{thm*}[\textbf{\ref{thm:genus_bound_jump}}]
Given a directed strongly invertible knot $K$, for every $\lambda>0$, $\lambda\neq 1$ we have
$$\widetilde{g}_4(K)\geq\frac{|\widetilde{J}_\lambda(K)|}{4}.$$
\end{thm*}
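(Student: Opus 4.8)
### Proof Proposal

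The plan is to extract the bound from an equivariant Seifert surface realizing the slice genus, combined with the signature-jump bookkeeping that is standard for classical Levine--Tristram signatures, adapted to the equivariant setting. Suppose $K$ is a directed strongly invertible knot with equivariant slice genus $\widetilde g_4(K)=g$. By definition there is a locally flat properly embedded surface $F\subset B^4$ of genus $g$ with $\partial F=K$, invariant under the extension of the involution. First I would produce an \emph{equivariant Seifert system} of ``size'' bounded in terms of $g$: pushing $F$ into $S^3$ (or using the half-axes structure to cut $F$ along the fixed-point set) one obtains an invariant Seifert surface whose equivariant Seifert form $V$ lives on a module whose rank is controlled by $2g$ plus a bounded contribution from the axis, exactly as in the construction behind $\Phi$ in Theorem~\ref{en_alg_conc}. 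The key point, which should already be implicit in the machinery of Sections~\ref{sect:grp_str} and~\ref{sect:new_equiv_sign}, is that $\widetilde\sigma_\lambda$ and hence $\widetilde J_\lambda$ are computed from the signatures of the Hermitian forms $(1-\lambda)V + (\text{twist by }\lambda)V^{T}$ (the precise pencil is the one used to define $\widetilde\sigma_\lambda$), so a metabolizer / small-rank argument gives the bound.

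The core estimate is the following: for $\lambda>0$, $\lambda\neq 1$, the equivariant signature jump $\widetilde J_\lambda(K)$ — being the jump of the function $\mu\mapsto\widetilde\sigma_\mu(K)$ near $\lambda$ — is bounded in absolute value by (a constant times) the rank of the underlying equivariant Seifert form, because each signature $\widetilde\sigma_\mu$ of a Hermitian form of rank $n$ lies in $[-n,n]$ and a jump is a difference of two such. Tracking the exact size of the equivariant Seifert system coming from a genus-$g$ invariant slice surface — here the relevant count yields that the form has rank at most $4g$ after accounting for the doubling inherent in passing from the slice surface in $B^4$ to the two ``halves'' produced by the involution — gives $|\widetilde J_\lambda(K)|\le 4g$, i.e. $\widetilde g_4(K)\ge |\widetilde J_\lambda(K)|/4$. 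Concretely I would: (1) recall that $\widetilde\sigma_\lambda$ vanishes on equivariantly slice knots for all relevant $\lambda$ (this is what makes it a concordance invariant and should follow from the factorization through $\widetilde\G^\Z_r$ together with Theorem~\ref{thm:grp_str}-type arguments); (2) observe that if $K$ bounds a genus-$g$ invariant surface then $K \,\#\, (-K')$ for a suitable companion is ``equivariantly close'' to something with a rank-$4g$ Seifert system, or more directly, that $K$ itself has such a system once we allow stabilization; (3) apply the rank bound to the signature jump.

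The main obstacle I anticipate is the bookkeeping in step~(2): making precise how a genus-$g$ invariant slice surface produces an equivariant Seifert system of rank $\le 4g$, as opposed to the naive $2g$ one would expect from the classical genus bound $g_4 \ge |\sigma_\omega|/2$. The factor of $4$ rather than $2$ is presumably the fingerprint of the equivariant construction, where the Seifert data records both the form and its interaction with the involution on the two half-axes, effectively doubling the relevant dimension; I would need to check carefully against the definition of $\Phi$ and of $\widetilde J_\lambda$ in Sections~\ref{en_alg_conc} and~\ref{sect:new_equiv_sign} to nail the constant. A clean alternative, which might be what the paper does, is to avoid surfaces altogether: show directly that $\widetilde J_\lambda$ descends to $\widetilde\G^\Z$, that on an equivariant Seifert form of rank $n$ one has $|\widetilde J_\lambda|\le n$, and that the invariant slice surface of genus $g$ gives $\Phi(K)$ represented by a form of rank $\le 4g$ (the latter being a dimension count on the invariant surface with its fixed arc). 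Either way, once the rank bound on $|\widetilde J_\lambda|$ is in hand, the inequality $\widetilde g_4(K)\ge |\widetilde J_\lambda(K)|/4$ is immediate; the restriction $\lambda\neq 1$ is forced because $\lambda=1$ is the point where the defining Hermitian pencil degenerates (corresponding to $\omega=-1$ in the Levine--Tristram picture), so the signature jump there is not controlled by the form in the same way.
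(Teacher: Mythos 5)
There is a genuine gap at the heart of your argument: the mechanism you propose for turning a genus-$g$ invariant slice surface into a bound on $|\widetilde{J}_\lambda|$ would fail. You want to produce an equivariant Seifert system for $K$ of rank at most $4g$ and then bound the jump by the rank of that representative. But an (invariant) Seifert surface in $S^3$ can have genus, hence a Seifert form of rank, arbitrarily large compared with $\widetilde{g}_4(K)$ --- already classically, slice knots have unbounded Seifert genus --- so no representative of $\Phi(K)$ of rank $\leq 4g$ is available from the slice surface, and ``allowing stabilization'' only increases the rank. The correct mechanism, and the one the paper uses, is dual to yours: fix an arbitrary invariant Seifert surface $F$ of some type $n$ and show that the genus-$g$ invariant surface in $B^4$ forces a \emph{partial metabolizer} of large rank inside $H_1(F,\Z)$ (Proposition \ref{alg_slice_link}, which rests on the equivariant $3$-manifold produced by Lemma \ref{3manifold}, after the fixed circles on the slice surface are removed via Proposition \ref{remove_fix_pt}). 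This is packaged as the inequality $2\widetilde{g}_4(K)\geq sc(K)\geq ac(\Phi(K)+n\Sy(G))$ of Theorem \ref{lower_bound_genus}, and a Hermitian form of rank $2m$ admitting an isotropic subspace of rank $k$ has all signatures, hence all jumps, bounded in absolute value by $2(m-k)=2\,ac$. The factor $4$ arises from the chain $2\widetilde{g}_4\geq sc\geq ac\geq|\widetilde{J}_\lambda|/2$, where the slice complexity $sc$ counts the fixed circles of the involution on the slice surface (bounded by the genus via Smith theory), not from any doubling in the Seifert data itself.

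A second, smaller error: your explanation of the exclusion $\lambda\neq 1$ is not right. Under the correspondence of Proposition \ref{prop:levine_tristram}, $\omega=-1$ matches $\lambda=0$, and $\lambda=1$ corresponds to no unit root at all, so no degeneration of the Levine--Tristram pencil is involved. The actual reason is that the lower bound of Theorem \ref{lower_bound_genus} involves a minimum over the type $n$ of the invariant Seifert surface, and changing the type stabilizes the associated symmetric structure by copies of $(\beta_G,S_G)=((1),(1))$, which alter only the eigenvalue-$1$ eigenspace; Corollary \ref{cor:indep_jump} shows that $\widetilde{J}_\lambda$ is independent of the type precisely when $\lambda\neq 1$, and this independence is what allows the jump to pass through the minimum over $n$.
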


Theorem \ref{thm:genus_bound_jump} can be used to obtain new examples of strongly invertible knots with $\widetilde{g}_4(K)-g_4(K)$ arbitrarily large (see Remark \ref{remark:big_equiv_genus}), where $g_4(K)$ is the classical (topological) slice genus.

\subsection*{Organization of the paper}
In Section \ref{sect:preliminaries} we briefly recall some notions and results on equivariant concordance and on algebraic concordance, and we introduce the definition of \emph{$n$-butterfly link}, which is a generalization of the \emph{butterfly link} \cite{boyle2021equivariant}.
Section \ref{ext_transv} contains some results on the extension and transversality of equivariant maps, that are used in the next section.
In Section \ref{sect:equiv_alg_conc} we use Proposition \ref{alg_slice_link} to motivate the definition of the equivariant algebraic concordance group $\widetilde{\G}^\Z$.
In Section \ref{sect:equiv_GL} we define a homomorphism from of $\widetilde{\G}^\Z$ to an equivariant version $\widetilde{W}(\Q)$ of the Witt group of $\Q$, and we show that the equivariant signature \cite{alfieri2021strongly} factors through $\widetilde{W}(\Q)$.
Section \ref{sect:grp_str} is dedicated to studying the group structure of $\widetilde{\G}_r^\Q$, which is a simpler variant of $\widetilde{\G}^\Z$. Using these results, we introduce in Section \ref{sect:new_equiv_sign} a new \emph{equivariant signature} function $\widetilde{\sigma}_\lambda$ and we describe how it can be used to obtain lower bounds on the equivariant slice genus.

Finally, in the Appendix \ref{apx:2_bridge} we provide a table of examples of application of Theorem \ref{thm:genus_bound_jump} on a family of $2$-bridge knots with at most $12$ crossings. 

\subsection*{Conventions}
We work in the topological category (see \cite{miller_powell, friedl2019survey} for details) unless otherwise specified. More precisely, we will implicitly consider
\begin{itemize}
    \item maps between manifolds to be continuous,
    \item submanifolds to be (properly) locally flat embedded,
    \item group actions (specifically involutions) on manifolds to be locally linear.
\end{itemize}
Throughout the paper we will refer to and use some results appearing in \cite[Section 4]{boyle2021equivariant} and \cite{diprisa_framba}. While in this paper such results are stated in the smooth category, we want to remark that the proofs can be adapted to work in the topological category.

\section{Preliminaries}\label{sect:preliminaries}

\subsection{Directed strongly invertible knots}\label{SIK}
We recall the definition of \emph{directed strongly invertible knots} and \emph{equivariant concordance group} following \cite{sakuma,boyle2021equivariant}.

Let $(K,\rho)$ be a strongly invertible knot.
By the resolution of the Smith conjecture \cite{smith} we know that $\rho$ acts on $S^3$ as a rotation around the \emph{axis} $\Fix(\rho)$, which is an unknotted $S^1$. Since the restriction of $\rho$ on $K$ is orientation-reversing, the fixed axis intersects $K$ in two points, which separate $\Fix(\rho)$ in two so-called \emph{half-axes}.

\begin{defn}
A \emph{direction} $h$ on a strongly invertible knot $(K,\rho)$ is the choice of one of the half-axes $h$ and an orientation on $\Fix(\rho)$. We say that $(K,\rho,h)$ is a \emph{directed strongly invertible knot}.
\end{defn}

\begin{defn}
We say that two directed strongly invertible knots $(K_i,\rho_i,h_i)$, $i=0,1$ are \emph{equivariantly isotopic} if there exists an orientation-preserving homeomorphism $\varphi:S^3\longrightarrow S^3$ such that:
\begin{itemize}
    \item $\varphi(K_0)=K_1$,
    \item $\varphi\circ\rho_0=\rho_1\circ\varphi$,
    \item $\varphi(h_0)=h_1$, preserving the chosen orientations on $h_0$ and $h_1$.
\end{itemize}
\end{defn}

We will often omit to specify the choice of strong inversion and direction when it is not strictly necessary to specify them.

\begin{remark}
A direction on $(K,\rho)$ induces an ordering on $K\cap\Fix(\rho)$: we say that the \emph{first fixed point} of $K$ is the initial point of the chosen half-axis, while the final point is the \emph{second fixed point}.
\end{remark}

\begin{defn}
Let $K$ and $J$, be two directed strongly invertible knots.
Their \emph{equivariant connected sum} $K\widetilde{\#}J$ is the directed strongly invertible knot obtained by cutting $K$ at its second fixed point and $J$ at its first fixed point, gluing the two knots and axes equivariantly in a way that is compatible with the orientations on the axes, and choosing the half-axis of the sum to be the union of the half-axes of the two components, as depicted in Figure \ref{equivariant_sum}.
\end{defn}

\begin{figure}[ht]
\centering
\begin{tikzpicture}

\node[anchor=south west,inner sep=0] at (0,0){\includegraphics[scale=0.3]{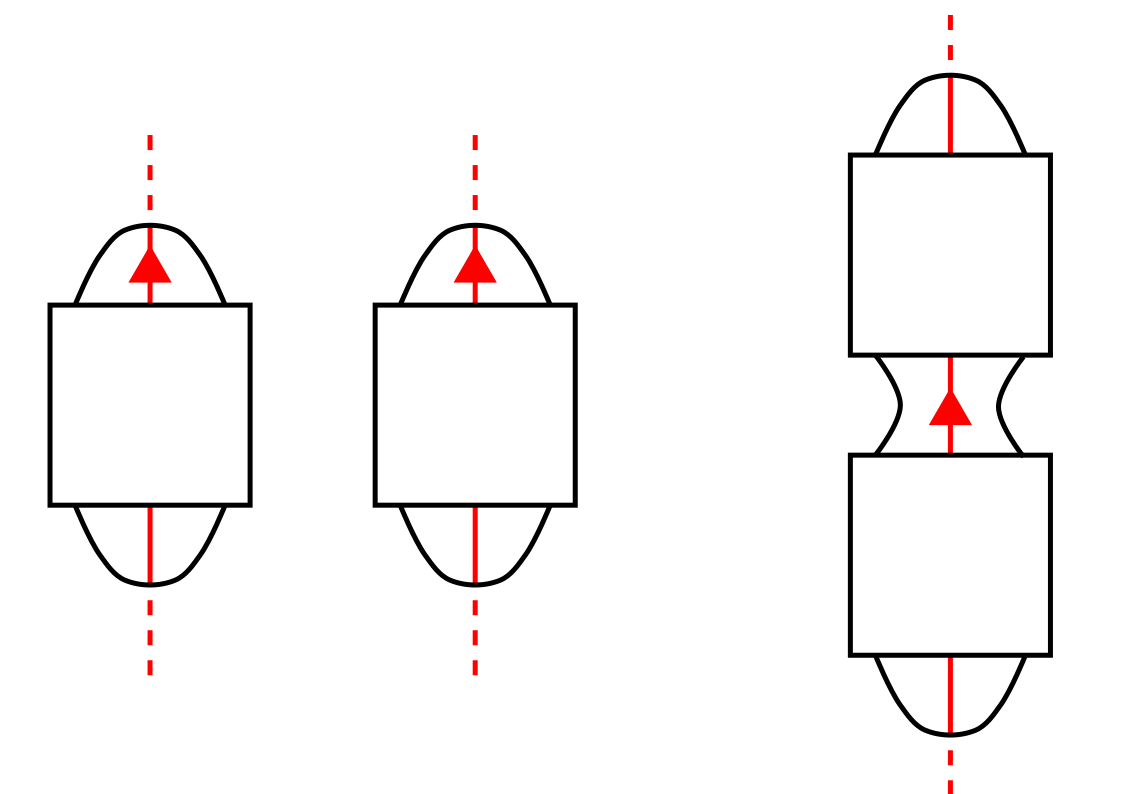}};
\node[label={$K$}] at (1.2,2.7){};
\node[label={$\widetilde{\#}$}] at (2.45,2.7){};

\node[label={$J$}] at (3.8,2.7){};
\node[label={$=$}] at (5.5,2.7){};
\node[label={$J$}] at (7.6,3.9){};
\node[label={$K$}] at (7.6,1.4){};

\end{tikzpicture}
    \caption{The equivariant connected sum of $K$ and $J$. The vertical axis (colored red) is the axis of the strong inversion. The chosen half-axis is the solid one.}
    \label{equivariant_sum}
\end{figure}

\begin{defn}
Let $(K,\rho,h)$ be a directed strongly invertible knot. We define
\begin{itemize}
    \item the \emph{mirror} of $(K,\rho,h)$ by $mK=(mK,\rho,h)$,
    \item the \emph{axis-inverse} of $(K,\rho,h)$ by $iK=(K,\rho,-h)$, where $-h$ is the direction given by the half-axis $h$ with the opposite orientation,
    \item the \emph{antipode} of $(K,\rho,h)$ by $aK=(K,\rho,h')$, where $h'$ is the direction given by the oriented half-axis complementary to $h$.
\end{itemize}
\end{defn}

\begin{defn}\label{def:equiv_slice}
Let $(K,\rho)$ be a strongly invertible knot. We say that $K$ is \emph{equivariantly slice} if there exists a locally flat slice disk $D\subset B^4$ for $K$, invariant with respect to a locally linear involution of $B^4$ extending $\rho$.
We define the \emph{equivariant slice genus} of $(K,\rho)$ as
$$\widetilde{g}_4(K)=\min_{\Sigma}\genus(\Sigma)$$
where $\Sigma$ ranges among the orientable locally flat surfaces in $B^4$ with boundary $K$, invariant under an involution extending $\rho$.
\end{defn}

\begin{defn}\label{def:equiv_conc}
We say that two directed strongly invertible knots $(K_i,\rho_i,h_i)$, $i=0,1$ are \emph{equivariantly concordant} if there exists a locally flat properly embedded annulus $C\cong S^1\times I\subset S^3\times I$, invariant with respect to some locally linear involution $\rho$ of $S^3\times I$ such that:
\begin{itemize}
    \item $\partial (S^3\times I,C)=(S^3,K_0)\sqcup -(S^3,K_1)$,
    \item $\rho$ is in an extension of the strong inversion $\rho_0\sqcup\rho_1$ on $S^3\times 0\sqcup S^3\times 1$,
    \item the orientations of $h_0$ and $-h_1$ induce the same orientation on the annulus (see \cite[Remark 2.12]{miller_powell}), $\Fix(\rho)$, and $h_0$ and $h_1$ are contained in the same component of $\Fix(\rho)\setminus C$.
\end{itemize}
\end{defn}

The operation of equivariant connected sum induces a group structure on the set $\C$ of classes of directed strongly invertible knots up to equivariant concordance. The class of the unknot gives the group identity, while the inverse of $K$ can be represented by $K^{-1}:=m(i(K))$.

Notice that, while the direction is essential to define an equivariant connected sum, $K$ is equivariantly slice if and only if $iK$ or $aK$ is so. Therefore we can consider the mirror, axis-inverse, and antipode as involutive maps from $\C$ to itself.

\subsection{Butterfly links}
In \cite{boyle2021equivariant} Boyle and Issa associate with a directed strongly invertible knot the so-called \emph{butterfly link}. Using this link they construct several invariants. In this section, we recall some of the invariants defined in \cite{boyle2021equivariant}.
Additionally, we introduce the definition of \emph{$n$-butterfly link} of a directly strongly invertible knot, which is important in the following sections. The $n$-butterfly link is a generalization of the butterfly link and it coincides with the definition in \cite{boyle2021equivariant} for $n=0$.

\begin{defn}\label{butterfly_link}
Let $(K,\rho,h)$ be a directed strongly invertible knot. Take an equivariant band $B$, parallel to the preferred half-axis $h$, which attaches to $K$ at the two fixed points.
Perform a band move on $K$ along $B$ such that the result is a $2$-component link. The linking number between the components of such a link depends on the number of twists of $B$ (see for example Figure \ref{bandmove}).
Observe that $\partial B\setminus K$ consists of two arcs parallel to $h$, which we orient as $h$. The arcs lie in different components of the link and we consider on each component the orientation induced from the respective arc.
The \emph{$n$-butterfly link} $L_b^n(K)$, is the $2$-components $2$-periodic link (i.e. the involution $\rho$ exchanges its components) obtained from such a band move on $K$ so that the linking number between its components is $n$.
\end{defn}

\begin{figure}
    \centering
    \includegraphics[scale=0.5]{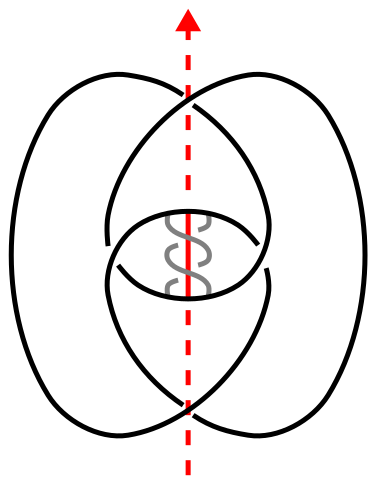}
    \caption{The band move (in grey) that produces the $0$-butterfly link of $4_1^+$.}
    \label{bandmove}
\end{figure}

Recall that a \emph{semi-orientation} on a link $L$ is the choice of an orientation on each component of $L$, up to reversing the orientation on all components simultaneously.

\begin{defn}
Define $\widehat{L}_b^n(K)$ to be the $n$-butterfly link of $K$ endowed with the opposite semi-orientation.
Observe that the semi-orientation on $\widehat{L}_b^n(K)$ makes the band move along $B$ coherent with the unique semi-orientation on $K$.
\end{defn}

With a slight abuse of notation, we will also call $\widehat{L}_b^n(K)$ the $n$-butterfly link of $K$.
\begin{remark}
Notice that the linking number between the components of $\widehat{L}_b^n(K)$, taken with respect to the chosen semi-orientation, is $-n$.
\end{remark}

\begin{defn}
Let $(L_i,\rho_i)$, $i=0,1$, be two $2$-component $2$-periodic links. We say that $(L_0,\rho_0)$ and $(L_1,\rho_1)$ are \emph{equivariantly concordant} if they bound two disjoint properly embedded locally flat annuli in $S^3\times I$, which are invariant with respect to some locally linear involution of $S^3\times I$ extending $\rho_0\sqcup\rho_1$.
\end{defn}

\begin{prop}\label{link_concordance}
Let $(K_i,\rho_i,h_i)$, $i=0,1$, be two equivariantly concordant directed strongly invertible knots. Then, $L_b^n(K_0)$ (resp. $\widehat{L}_b^n(K_0)$) is equivariantly concordant to $L_b^n(K_1)$ (resp. $\widehat{L}_b^n(K_1)$).
\end{prop}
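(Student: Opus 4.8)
The plan is to lift the band move defining the butterfly link from the knots to the concordance, using the involution to keep the resulting cobordism a pair of disjoint invariant annuli. Fix an equivariant concordance $C\subset S^3\times I$ between $K_0$ and $K_1$, invariant under a locally linear involution $\rho$ as in Definition \ref{def:equiv_conc}, and write $p_i,q_i$ for the two fixed points of $K_i$, ordered by the direction.

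First I would analyse $\Fix(\rho)$ near $C$. The restriction $\rho|_C$ is an orientation-reversing involution of the annulus $C$ preserving each boundary circle and having $\{p_0,q_0,p_1,q_1\}$ among its fixed points; ruling out closed fixed components (by Smith theory on the sub-disk or sub-annulus such a component would bound) shows that $\Fix(\rho)\cap C$ is exactly two properly embedded arcs $\delta_1,\delta_2$, and a Smith-inequality count in $S^3\times I$ forces $\Fix(\rho)$ to be an annulus with boundary $\Fix(\rho_0)\sqcup\Fix(\rho_1)$. Since the concordance respects the directions, $\delta_1$ joins $p_0$ to $p_1$ and $\delta_2$ joins $q_0$ to $q_1$, while $h_0$ and $h_1$ lie on the same component $H$ of $\Fix(\rho)\setminus C$; thus $H$ is a rectangle carrying $h_0$ and $h_1$ on two opposite sides and parallel copies of $\delta_1,\delta_2$ on the other two.

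Next I would build the equivariant band. The band $B_i$ defining $L_b^n(K_i)$ is a $\rho_i$-invariant band parallel to $h_i$ with feet at $p_i$ and $q_i$; its two sides parallel to $h_i$ are the $\pm$-pushoffs of $h_i$ into a normal framing of the axis, on whose rank-$2$ normal bundle $\rho_i$ acts as $-\id$. Choosing a normal framing of $H$ in $S^3\times I$ restricting to those of $B_0$ and $B_1$ at the two ends -- possible since $H$ is a disk -- let $\mathcal B^{+}$ be one pushoff of $H$ and $\mathcal B^{-}=\rho(\mathcal B^{+})$ the opposite one (they are opposite because $\rho$ acts as $-\id$ on the normal bundle of $\Fix(\rho)\supset H$). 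For a small enough framing, $\mathcal B^{+}$ and $\mathcal B^{-}$ are disjoint rectangles meeting $C$ only along small bands about $\delta_1$ and $\delta_2$, and restricting at levels $0$ and $1$ to the two halves of $B_0$ and $B_1$. One subtlety is that the framing of $\mathcal B^{\pm}$ at level $1$ is not a priori the one chosen for $L_b^n(K_1)$; but the link it produces is a band-pushoff of $h_1$ whose components have the same linking number as at level $0$ -- linking number being a concordance invariant of $2$-component links -- hence it equals $L_b^n(K_1)$.

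Finally I would perform the surgery. Remove from $C$ a small invariant neighborhood $N$ of $\delta_1\cup\delta_2$, chosen so that $N\cap(S^3\times\{i\})$ is the pair of feet of $B_i$; since $\delta_1,\delta_2$ are parallel essential arcs separating $C$ into two pieces swapped by $\rho$, the closure of $C\setminus N$ is a pair of disks $R^{+},R^{-}$ with $\rho(R^{+})=R^{-}$. Set $A^{\pm}:=R^{\pm}\cup\mathcal B^{\pm}$: this is a disk with a rectangle glued along two boundary arcs, hence an annulus (indeed $\chi(A^{\pm})=1+1-2=0$), whose two boundary circles are the corresponding components of $L_b^n(K_0)$ and of $L_b^n(K_1)$. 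Then $\mathcal A:=A^{+}\sqcup A^{-}$ is a pair of disjoint, properly embedded, locally flat annuli in $S^3\times I$, invariant under $\rho$ (which swaps them), i.e.\ an equivariant concordance from $L_b^n(K_0)$ to $L_b^n(K_1)$. The statement for $\widehat{L}_b^n$ is then immediate, since $\widehat{L}_b^n(K_i)$ is the same $2$-periodic link as $L_b^n(K_i)$ and equivariant link concordance does not refer to the semi-orientation (and, tracking orientations, the semi-orientations match by the direction-compatibility clause of Definition \ref{def:equiv_conc}). I expect the real work to lie in the first two steps -- certifying that $\Fix(\rho)\cap C$ is the two expected arcs and, above all, controlling the twisting of the extended band so that the concordance lands on $L_b^n(K_1)$ with the correct $n$ rather than on a different butterfly link.
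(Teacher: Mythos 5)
Your construction is correct and is essentially the argument the paper invokes: the proof here is deferred to Proposition 2.6 of \cite{diprisa_framba}, which likewise identifies the fixed-point set of the extended involution as an annulus joining the two axes and then performs the band move along (a thickening of) that annulus to produce the two swapped annuli, with the linking number pinned down by concordance invariance exactly as you do. The only point to tighten is the claim that the Smith inequality in $S^3\times I$ by itself forces $\Fix(\rho)$ to be an annulus --- the inequality alone would also allow two disks, one capping each axis --- but this configuration is excluded by the arcs $\delta_1,\delta_2$ you have already produced, which place $\Fix(\rho_0)$ and $\Fix(\rho_1)$ in the same component of $\Fix(\rho)$, after which the Betti-number count does yield the annulus.
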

\begin{proof}
The proof is identical to the proof of Proposition 2.6 in \cite{diprisa_framba}.
\end{proof}

\begin{remark}\label{slice_link}
It follows from the proposition above that if $K$ is equivariantly slice then also $\widehat{L}_b^0(K)$ is \emph{equivariantly slice}, i.e. it bounds two disjoint equivariant disks in $B^4$. On the other hand, since $\widehat{L}_b^0(K)$ is obtained by an equivariant band move from $K$ (which can be seen as a genus $0$ equivariant cobordism) we have that if $\widehat{L}_b(K)$ is equivariantly slice then so is $K$.
\end{remark}

Despite Remark \ref{slice_link}, it is not true in general that if $L_b^0(K)$ is equivariantly concordant to $L_b^0(J)$ then $K$ is equivariantly concordant to $J$.

\begin{defn}[\cite{boyle2021equivariant}]\label{butterfly_homomorphisms}
Let $K$ be a directed strongly invertible knot. Define
\begin{itemize}
    \item $\mathfrak{b}(K)$ to be the knot given by one component of $L_b^0(K)$,
    \item $\qb(K)$ to be the knot $L_b^0(K)/\rho$ in $S^3/\rho\cong S^3$.
\end{itemize}
As proven in \cite{boyle2021equivariant}, we have that $\mathfrak{b}$ induces a group homomorphism
$$
\mathfrak{b}:\C\longrightarrow\mathcal{C},
$$
where $\C$ is the classical (topological) knot concordance group.
\end{defn}

\begin{defn}\label{double}
Given an oriented knot $K$, its \emph{double} $\mathfrak{r}(K)$ is the directed strongly invertible knot given by $K\#r(K)$, with the involution $\rho$ that exchanges $K$ and $r(K)$ (the $\pi$-rotation around the vertical axis in Figure \ref{rK}). The direction on $\mathfrak{r}(K)$ is given as follows: the connected sum can be performed by a suitable band move along some band $B$, in grey in the figure, in such a way that $\Fix(\rho)\cap B$ is the half-axis $h$. We orient $h$ as the portion of $B$ lying on $K$ (note that $h$ is parallel to $B\cap K$).
\end{defn}

\begin{figure}[ht]
\centering
\begin{tikzpicture}

\node[anchor=south west,inner sep=0] at (0,0){\includegraphics[scale=0.3]{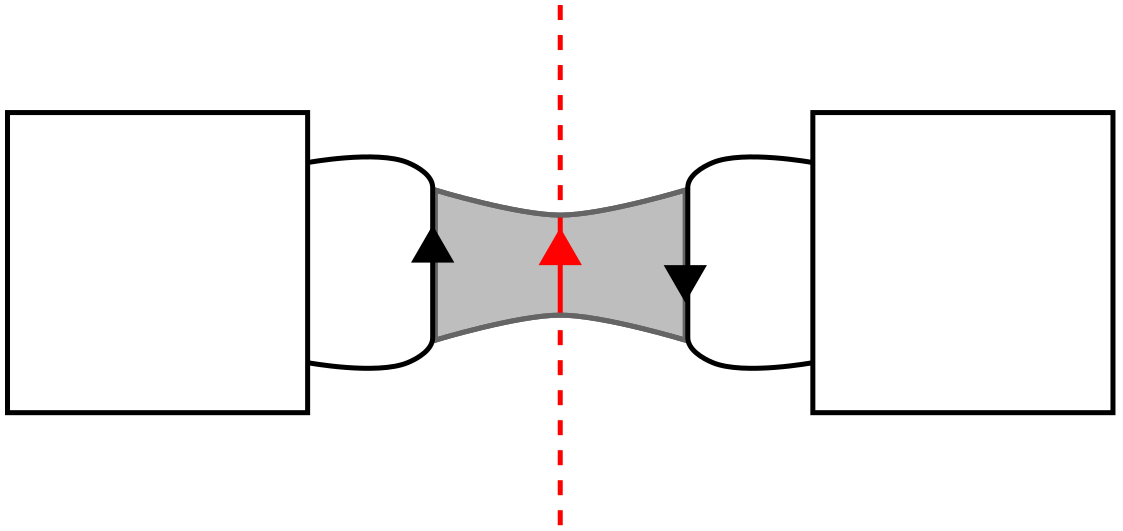}};
\node[label={$K$}] at (1.2,1.7){};
\node[label={$h$}] at (4.1,1.7){};

\node[label={$r(K)$}] at (7.8,1.7){};

\end{tikzpicture}
    \caption{The directed strongly invertible knot $\mathfrak{r}(K)$. The chosen half-axis is the solid one.}
    \label{rK}
\end{figure}

As proven by Boyle and Issa \cite{boyle2021equivariant}), $\mathfrak{r}$ defines a homomorphism
$$\mathfrak{r}:\mathcal{C}\longrightarrow\C.$$

It is immediate from the definitions that given an oriented knot $K$, the $0$-butterfly link of $\mathfrak{r}(K)$ is given by two split copies of $K$ (see again \cite{boyle2021equivariant} for details).
Therefore the composition $\mathfrak{b}\circ\mathfrak{r}:\mathcal{C}\longrightarrow\mathcal{C}$ is the identity homomorphism.

Hence, we get that $\mathfrak{b}$ is surjective, $\mathfrak{r}$ is injective, and that $\mathfrak{r}(\cc)$ is a copy of the classical concordance group, contained in the center of $\C$ (as noted in \cite{sakuma}).
As a consequence, we observe the following corollary.

\begin{cor}\label{split}
The equivariant concordance group splits as
$$\C=\ker(\mathfrak{b})\oplus\mathfrak{r}(\mathcal{C}).$$
\end{cor}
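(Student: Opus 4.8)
The plan is to observe that, thanks to the discussion preceding the statement, the corollary has been reduced to a purely group-theoretic fact. Indeed, we already know that $\mathfrak{b}\colon\C\to\mathcal{C}$ is a surjective homomorphism, that $\mathfrak{r}\colon\mathcal{C}\to\C$ is an injective homomorphism with $\mathfrak{b}\circ\mathfrak{r}=\id_{\mathcal{C}}$, and that $\mathfrak{r}(\mathcal{C})$ is contained in the center of $\C$. So we are in the situation of a short exact sequence of groups
\[
1\longrightarrow\ker(\mathfrak{b})\longrightarrow\C\overset{\mathfrak{b}}{\longrightarrow}\mathcal{C}\longrightarrow 1
\]
that admits a splitting $\mathfrak{r}$ whose image is central, and the claim is that such a sequence yields an internal direct product decomposition $\C=\ker(\mathfrak{b})\oplus\mathfrak{r}(\mathcal{C})$.

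To prove this I would check the three standard conditions for an internal direct product of the subgroups $N:=\ker(\mathfrak{b})$ and $M:=\mathfrak{r}(\mathcal{C})$. First, both are normal in $\C$: $N$ is the kernel of a homomorphism, and $M$ is normal because it is central. Second, $N\cap M$ is trivial: if $x\in M$, write $x=\mathfrak{r}(k)$; if moreover $x\in N$, then $k=\mathfrak{b}(\mathfrak{r}(k))=\mathfrak{b}(x)$ is trivial in $\mathcal{C}$, hence so is $x=\mathfrak{r}(k)$. Third, $N$ and $M$ generate $\C$: given $K\in\C$, the element $K_0:=K\,\widetilde{\#}\,\mathfrak{r}(\mathfrak{b}(K))^{-1}$ satisfies $\mathfrak{b}(K_0)=\mathfrak{b}(K)\cdot\mathfrak{b}(\mathfrak{r}(\mathfrak{b}(K)))^{-1}=\mathfrak{b}(K)\cdot\mathfrak{b}(K)^{-1}$, which is trivial since $\mathfrak{b}\circ\mathfrak{r}=\id_{\mathcal{C}}$; thus $K_0\in N$ and $K=K_0\,\widetilde{\#}\,\mathfrak{r}(\mathfrak{b}(K))$ with $\mathfrak{r}(\mathfrak{b}(K))\in M$. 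From these three conditions the multiplication map $N\times M\to\C$ is a group isomorphism, which is exactly the assertion $\C=\ker(\mathfrak{b})\oplus\mathfrak{r}(\mathcal{C})$.

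I do not expect any genuine difficulty here: the entire content of the corollary is carried by the already-established identity $\mathfrak{b}\circ\mathfrak{r}=\id_{\mathcal{C}}$ together with the centrality of $\mathfrak{r}(\mathcal{C})$. The only point deserving a moment's care is that $\C$ is not abelian, so a priori the two factors need not commute; but this is automatic, since for $n\in N$ and $m\in M$ the commutator $nmn^{-1}m^{-1}$ lies in $N\cap M=\{1\}$, and in any case it is immediate from the centrality of $M$. Consequently the symbol $\oplus$ in the statement is to be read as an internal direct product of groups, with the (possibly non-abelian) factor $\ker(\mathfrak{b})$ and the abelian factor $\mathfrak{r}(\mathcal{C})\cong\mathcal{C}$.
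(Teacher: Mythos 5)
Your proposal is correct and is precisely the standard argument the paper leaves implicit: the corollary is stated "as a consequence" of the facts that $\mathfrak{b}\circ\mathfrak{r}=\id_{\mathcal{C}}$ and that $\mathfrak{r}(\mathcal{C})$ is central, with no further proof given. Your verification of the three internal-direct-product conditions simply spells out what the paper takes as immediate.
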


\subsection{Algebraic concordance}
In \cite{Levine1969a,Levine1969b} Levine defined a surjective homomorphism from the classical concordance group to a Witt group of Seifert forms, called the \emph{algebraic concordance group}, which is given by
\begin{gather*}
    \phi:\mathcal{C}\longrightarrow\G^{\Z}\\
    [K]\longmapsto [\theta_F]
\end{gather*}
where $F$ is a Seifert surface for $K$ and $\theta_F$ is the Seifert form on $H_1(F,\Z)$.

By taking the symmetrization of the Seifert form one obtains a group homomorphism
\begin{gather*}
\G^\Z\longrightarrow\W(\Q)\\
[A]\longmapsto[A+A^t]
\end{gather*}
where $\W(\Q)$ is the Witt group of non-degenerate symmetric forms on finite-dimensional $\Q$-vector spaces.
Denote by $\phi_W:\mathcal{C}\longrightarrow\W(\Q)$ the composition.
Clearly by composing $\phi_W$ with the signature homorphism $\sigma:\W(\Q)\longrightarrow\Z$ one get the knot signature $\sigma(K)$.

Given a (possibly nonorientable) spanning surface $F$ for a link $L$, Gordon and Litherland \cite{Gordon1978} defined a bilinear form
$$\G_F:H_1(F,\Z)\times H_1(F,\Z)\longrightarrow \Z$$
$$(a,b)\longmapsto \lk(\widetilde{a},b)$$
given by the linking number of $b$ with $a$ pushed off $F$ ``in both directions simultaneously''. This form is bilinear and symmetric and if $F$ is oriented it coincides with the symmetrization of the Seifert form.

In \cite{Gordon1978} Gordon and Litherland proved that it is possible to compute the signature of a knot from the Gordon-Litherland form of any spanning surface, by introducing a corrective term. We briefly recall some of the notation used in \cite{Gordon1978} and we observe in Proposition \ref{GL_refined} how the results of Gordon and Litherland allow us to compute not only the signature of a knot $K$ but the whole Witt class $\phi_W(K)$, using any spanning surface.
This fact is presumably known to the experts but we could not find it in the literature.

\begin{defn}\label{euler_number}
Let $F$ be a spanning surface for a knot $K$ and let $K^F$ be a longitude of $K$ which misses $F$.
The \emph{relative Euler number of $F$} is defined as $$e(F)=-\lk(K,K^F),$$
where $K$ and $K^F$ are coherently oriented. 
\end{defn}
Observe that since $K^F$ and $F$ are disjoint, $[K]=0\in H_1(S^3\setminus K^F,\Z/2\Z)$. Hence $e(F)$ is always an even integer.

\begin{defn}
Let $F_1$, $F_2$ be two surfaces in $S^3$ with $\partial F_1=\partial F_2$ and suppose that there exists a $3$-ball $B^3=B^1\times B^2\subset S^3\setminus \partial F_i$ such that
\begin{itemize}
    \item $F_1\cap B^3=\partial B^1\times B^2$,
    \item $F_2\cap B^3=B^2\times\partial B^1$,
    \item $F_1\setminus B^3=F_2\setminus B^3$.
\end{itemize}
In this situation, we say that $F_2$ is obtained from $F_1$ by a \emph{$1$-handle move}.
\end{defn}

\begin{figure}[ht]
\centering
\begin{tikzpicture}

\node[anchor=south west,inner sep=0] at (0,0){\includegraphics[scale=0.3]{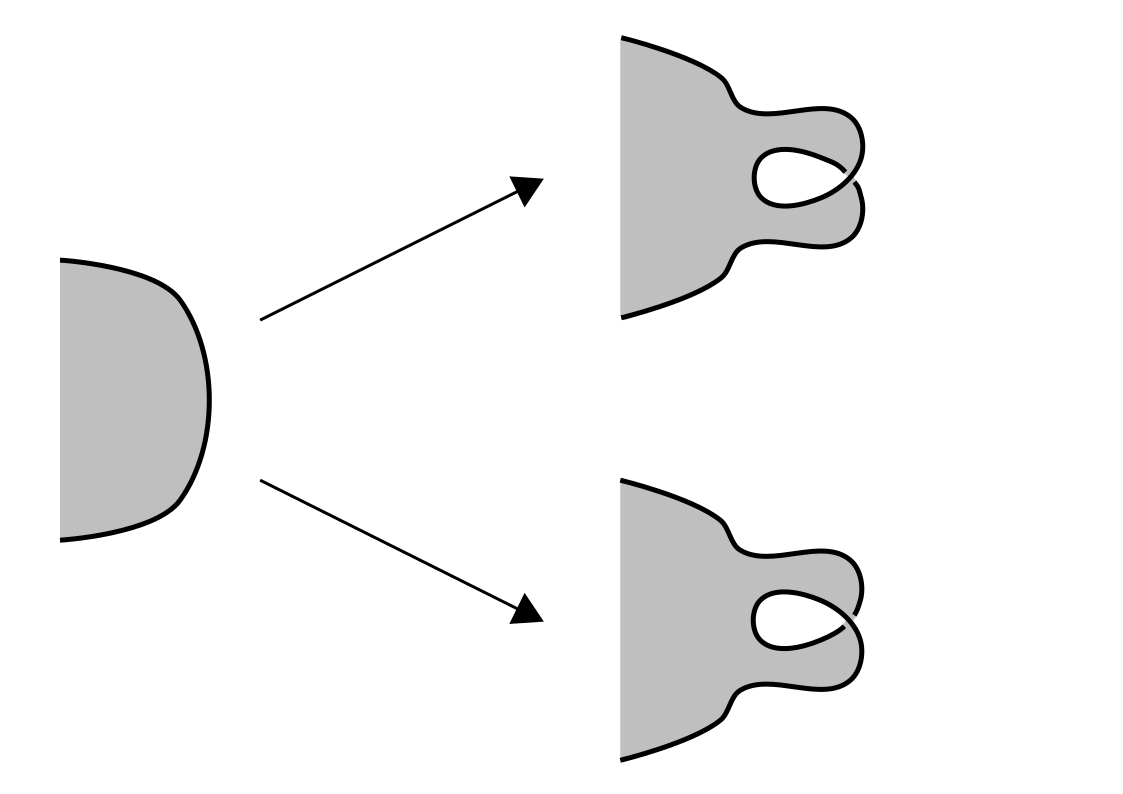}};
\node[label={$\varepsilon=+1$}] at (8,4.5){};
\node[label={$\varepsilon=-1$}] at (8,0.9){};
\node[label={$F$}] at (1,2.7){};
\end{tikzpicture}
    \caption{The addition of a half-twisted band.}
    \label{half_band}
\end{figure}

\begin{prop}\label{GL_refined}
Let $F$ be spanning surface for $K$, and $A$ a matrix representing Gordon-Litherland form $\G_F$ on $H_1(F)$.
Then, the Witt class of $K$ is represented by
$$
\begin{pmatrix}
A&0\\
0&\varepsilon Id
\end{pmatrix},
$$
where $\varepsilon=\sign(e(F))$ and the $Id$ block has size $n\times n$ with $n=|e(F)|/2$.
\end{prop}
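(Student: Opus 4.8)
The plan is to reduce to the well-known formula of Gordon and Litherland relating the signature of $K$ to the signature of the Gordon--Litherland form corrected by the Euler number, and then to upgrade that numerical identity to an identity of Witt classes by a careful bookkeeping of how a spanning surface and its form change under elementary moves. The strategy rests on two facts: (1) any two spanning surfaces for $K$ are related by a finite sequence of ambient isotopies, $1$-handle moves (which preserve $H_1$ up to stabilization by a hyperbolic summand, hence preserve the Witt class of $\G_F$), and additions of half-twisted bands as in Figure~\ref{half_band} (which change $e(F)$ by $\pm2$ and add a $\pm1$ to the form); and (2) for an \emph{oriented} Seifert surface $F$ one has $e(F)=0$, $\G_F=A+A^t$ is precisely the symmetrization of the Seifert form, and hence the right-hand side of the proposition is literally the definition of $\phi_W(K)$.

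First I would fix an oriented Seifert surface $F_0$ for $K$, for which the claimed matrix is $\begin{pmatrix}A_0&0\\0&0\end{pmatrix}$ with $A_0=\theta_{F_0}+\theta_{F_0}^t$ and $e(F_0)=0$; this is the base case and equals $\phi_W(K)$ by definition. Then, given an arbitrary spanning surface $F$, I would invoke the surface-moves calculus (e.g.\ the fact that any two spanning surfaces of a fixed knot become isotopic after adding half-twisted bands and $1$-handles) to write a path from $F_0$ to $F$. Along this path I track the pair $\bigl(\text{Witt class of }\G_F,\ e(F)\bigr)$. A $1$-handle move replaces $H_1(F)$ by $H_1(F)\oplus\Z$ or quotients it, adjoining or removing a hyperbolic plane $\begin{pmatrix}0&1\\1&0\end{pmatrix}$ to the form and leaving $e(F)$ unchanged, so the asserted matrix changes only by a metabolic summand — Witt-trivial — and the claim is preserved. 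Adding a half-twisted band of sign $\varepsilon'=\pm1$ changes $e(F)$ by $-2\varepsilon'$ and changes the form by an orthogonal $\langle \varepsilon'\rangle$ summand; one checks that the block $\begin{pmatrix}A&0\\0&\varepsilon\,Id_n\end{pmatrix}$ transforms exactly as predicted, using that $\langle +1\rangle\oplus\langle -1\rangle$ is metabolic to absorb sign changes when $e(F)$ crosses $0$. Carrying this induction through shows the displayed matrix is Witt-equivalent to $\phi_W(K)$ for every spanning surface, which is the assertion.

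Alternatively — and this may be cleaner to write — I would not move surfaces at all but instead cite directly the refined statements in Gordon--Litherland: they already compute, via the double branched cover and a Mayer--Vietoris/Novikov-additivity argument on the $4$-manifold obtained by pushing $F$ into $B^4$, that the \emph{intersection form} of that $4$-manifold is (after handle cancellation) represented by $\begin{pmatrix}\G_F&0\\0&\varepsilon\,Id\end{pmatrix}$, and that this intersection form is independent of $F$ up to Witt equivalence because different choices of $F$ give $4$-manifolds with the same boundary glued along it. Since for an oriented Seifert surface this intersection form is the symmetrized Seifert form (with no correction, as $e=0$), its Witt class is $\phi_W(K)$ by definition, and the proposition follows. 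In either approach the one genuinely load-bearing input is the \emph{independence of the Witt class from the spanning surface}, i.e.\ that the correction term $\varepsilon\,Id_{|e(F)|/2}$ exactly compensates the non-orientability/twisting; Gordon--Litherland proved the signature-level version of this, and the main work here is observing that their cobordism argument is already at the level of forms, not just their signatures, so no new idea is needed beyond recording it.

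The main obstacle I anticipate is purely expository: making precise the claim that ``any two spanning surfaces are related by $1$-handle moves and half-twisted-band additions'' with good enough control to track the form, or — in the cobordism approach — verifying that the $4$-manifold with boundary the double cover of $S^3$ (i.e.\ $S^3$) built from two different spanning surfaces can be glued to give a \emph{closed} manifold whose intersection form is the difference of the two, so that Witt-triviality of closed-manifold intersection forms (or rather, the relevant nullcobordism) gives the independence. Both are standard in spirit, and since the statement is explicitly flagged as folklore, I would keep the argument short, citing \cite{Gordon1978} for the analytic heart and only spelling out the upgrade from signatures to Witt classes.
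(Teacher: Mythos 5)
Your first argument is essentially identical to the paper's proof: it reduces to an oriented Seifert surface via Gordon--Litherland's moves (isotopy, $1$-handle moves, half-twisted band additions), checks that the first two preserve the Witt class, and tracks the $\langle\varepsilon\rangle$ summands against the change $e(F)\mapsto e(F)-2\varepsilon$, cancelling opposite pairs because $\langle+1\rangle\oplus\langle-1\rangle$ is metabolic. The alternative cobordism route you sketch is not what the paper does, but the primary argument is correct and matches.
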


\begin{proof}
Let $G$ be a Seifert surface for $K$ (in particular $e(G)=0$). By \cite[Theorem 11]{Gordon1978} we can obtain $G$ from $F$ by a finite sequence of the following moves (and their inverses):
\begin{itemize}
    \item ambient isotopy,
    \item $1$-handle moves,
    \item addition of a small half-twisted band at the boundary.
\end{itemize}

It is not difficult to check that the first two moves do not change the Witt class of the Gordon-Litherland form. By attaching a half-twisted band the Gordon-Litherland form and the relative Euler number change as
$$A\longrightarrow\begin{pmatrix}
A&0\\0&\varepsilon
\end{pmatrix},$$
$$e(F)\longmapsto e(F)-2\varepsilon,$$
where $\varepsilon=\pm1$ depends on the twist of the band, as in Figure \ref{half_band}.
Since the matrix $\begin{pmatrix}
1&0\\
0&-1
\end{pmatrix}$ is metabolic, if we attach two bands with opposite half-twists, the overall move leaves the Witt class unchanged.
The conclusion follows by observing that, up to algebraic cancellation, one has to attach $n=|e(F)|/2$ bands with the same half-twist.

\end{proof}

\section{Extension and transversality of equivariant maps}\label{ext_transv}
In this section, we show some results on the extension and transversality of equivariant maps. We use these results to prove Lemma \ref{3manifold}, which is fundamental for the constructions in Section \ref{sect:equiv_alg_conc}.

Let $X$ be a connected manifold with boundary, such that the inclusion of $\partial X$ in $X$ induces an isomorphism $H^1(X,\Z)\longrightarrow H^1(\partial X,\Z)$. Since $S^1$ is a $K(\Z,1)$, every map $\partial X\longrightarrow S^1$ can be extended to a map $X\longrightarrow S^1$, which is unique up to homotopy.

Consider now the $\Z/2\Z$-action on $S^1$ given by
$$\iota:S^1\longrightarrow S^1$$
$$z\longmapsto \overline{z},$$
and suppose that $X$ is endowed with a $\Z/2\Z$-action, generated by $\rho:X\longrightarrow X.$

\begin{lemma}\label{equiv_ext}
Let $(X,\partial X,\rho)$ be as above. Let $f:(\partial X,\rho)\longrightarrow (S^1,\iota)$ be an equivariant map, i.e. $f=\iota\circ f\circ\rho$, and suppose there exists $x_0\in \partial X$ such that $f(x_0)=1$. Then $f$ admits an equivariant extension $F:(X,\rho)\longrightarrow (S^1,\iota)$.
\end{lemma}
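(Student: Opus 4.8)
The plan is to construct the equivariant extension by first producing an arbitrary (non-equivariant) extension, then averaging it against its $\rho$-conjugate to force equivariance. First I would use the hypothesis on $X$: since the inclusion $\partial X \hookrightarrow X$ induces an isomorphism on $H^1(-,\Z)$ and $S^1 = K(\Z,1)$, the map $f : \partial X \to S^1$ extends to some map $g : X \to S^1$, unique up to homotopy. This $g$ need not be equivariant, but $\iota \circ g \circ \rho$ is another extension of $f$ (here we use that $f$ itself \emph{is} equivariant, so $\iota \circ f \circ \rho = f$ on $\partial X$). Now consider the "difference" map: writing the group structure on $S^1$ multiplicatively, set $d = (\iota \circ g \circ \rho) \cdot g^{-1} : X \to S^1$. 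On $\partial X$ this is the constant map $1$, so $d$ represents a class in $H^1(X, \partial X; \Z)$ — or more concretely, $d$ lifts to a map $\widetilde{d} : X \to \R$ vanishing on $\partial X$.

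The key computation is to check how $d$ behaves under $\rho$. Note $\iota$ is multiplicative-inverse on $S^1$ (i.e. $\iota(z) = \overline z = z^{-1}$), so $\iota \circ g \circ \rho = (g\circ\rho)^{-1}$. A direct check gives $d \circ \rho = (g)^{-1} \cdot (g \circ \rho) = d^{-1}$ (using $\rho^2 = \id$), i.e. $d$ is \emph{anti}-equivariant with respect to $\iota$. Then I would define the desired extension by a square-root-averaging trick: take the lift $\widetilde d : X \to \R$ with $\widetilde d|_{\partial X} = 0$, which satisfies $\widetilde d \circ \rho = -\widetilde d$ (the lift of the sign relation, with the correct additive constant $0$ forced by the boundary condition and the basepoint $x_0$), and set
$$
F = \left(e^{\pi i \, \widetilde d}\right) \cdot g : X \longrightarrow S^1,
$$
i.e. we correct $g$ by "half" of the discrepancy $d = e^{2\pi i \widetilde d \,/\,2\pi}\cdots$ — more precisely by $e^{i\widetilde d /2}$ in the appropriate normalization. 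On $\partial X$ this equals $g = f$ since $\widetilde d$ vanishes there. Equivariance $F = \iota \circ F \circ \rho$ then reduces to the identity $\widetilde d \circ \rho = -\widetilde d$ together with $d \circ \rho = d^{-1}$, which I would verify by a short direct substitution.

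The main obstacle is making the "half-averaging" rigorous: one must check that the real-valued lift $\widetilde d$ of $d$ exists globally with the correct equivariance $\widetilde d \circ \rho = -\widetilde d$ on the nose (not merely up to adding an integer), and this is exactly where the basepoint hypothesis $f(x_0) = 1$ enters — it pins down the additive constant so that the lift is genuinely odd under $\rho$ rather than odd-up-to-a-constant. Concretely, $d(x_0) = 1$, so we may choose $\widetilde d(x_0) = 0$; then $\widetilde d \circ \rho$ and $-\widetilde d$ are two lifts of $d^{-1} = d \circ \rho$ agreeing at $x_0$, hence equal by connectedness of $X$. Once this is in place, everything else is a formal manipulation, and the extension $F$ is then automatically continuous and equivariant, completing the proof.
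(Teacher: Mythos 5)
Your overall strategy is the same as the paper's: extend $f$ arbitrarily to $g:X\to S^1$, use the homotopy-uniqueness of extensions (via $H^1(X)\cong H^1(\partial X)$) to see that the discrepancy between $g$ and the second extension $\iota\circ g\circ\rho$ is null-homotopic, and then correct $g$ by ``half'' of that discrepancy, using the basepoint $x_0$ to pin down the choice. Indeed your $F$ satisfies $F^2=g\cdot(\iota\circ g\circ\rho)$, which is exactly the equivariant map $H$ that the paper lifts through the squaring map of $S^1$. However, the identity on which your equivariance check rests is false. Writing $d=(\iota\circ g\circ\rho)\cdot g^{-1}$ pointwise gives $d(x)=g(\rho(x))^{-1}g(x)^{-1}$, which is symmetric under $x\leftrightarrow\rho(x)$; hence $d\circ\rho=d$, \emph{not} $d^{-1}$ (your chain $d\circ\rho=g^{-1}\cdot(g\circ\rho)=d^{-1}$ drops an inverse on the factor $g\circ\rho$, and in any case $g^{-1}\cdot(g\circ\rho)$ is not $d^{-1}=g\cdot(g\circ\rho)$). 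Consequently the lift satisfies $\widetilde{d}\circ\rho=\widetilde{d}+n$ for a constant integer $n$, and applying this relation at $x$ and at $\rho(x)$ and adding yields $2n=0$, so $\widetilde{d}\circ\rho=\widetilde{d}$, not $-\widetilde{d}$. This is not a cosmetic issue: if your claimed identity $\widetilde{d}\circ\rho=-\widetilde{d}$ actually held, then $F=e^{\pi i\widetilde{d}}\cdot g$ would \emph{fail} to be equivariant, since one would get $\iota\circ F\circ\rho=e^{3\pi i\widetilde{d}}\cdot g$, which differs from $F$ by the factor $d$. So the ``short direct substitution'' you defer would not close the argument as written, and the paragraph arguing that $\widetilde{d}\circ\rho$ and $-\widetilde{d}$ are two lifts of the same map is establishing a false statement.

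The construction itself survives once the sign is repaired: with the correct relation one computes $\iota\circ F\circ\rho=e^{-\pi i(\widetilde{d}\circ\rho)}\cdot(\iota\circ g\circ\rho)=e^{-\pi i\widetilde{d}}\cdot d\cdot g=e^{-\pi i\widetilde{d}}e^{2\pi i\widetilde{d}}g=F$, and this is essentially the paper's proof in additive disguise. One further point, which your write-up shares with the paper's: the normalization at $x_0$ only forces $\widetilde{d}$ to vanish on the component of $\partial X$ containing $x_0$; on other boundary components $\widetilde{d}$ is merely an integer, so a priori $F|_{\partial X}=\pm f$ componentwise. This should be addressed (e.g.\ by connectedness of $\partial X$ in the intended application) but is not a defect specific to your argument.
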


\begin{proof}
Let $G:X\longrightarrow S^1$ be a (possibly non equivariant) extension of $f$, and define $H:X\longrightarrow S^1$ as $H=G\cdot (\iota\circ G\circ \rho)$, where $\cdot$ is the group operation on $S^1$. By construction $H$ is equivariant.
Since $f$ is equivariant, we have that $\iota\circ G\circ \rho$ is another extension of $f$, and hence that it is homotopic to $G$. Therefore the induced maps are the same.
$$G_*=(\iota\circ G\circ\rho)_*:H_1(X,\Z)\longrightarrow H_1(S^1,\Z)=\Z.$$
It follows that $H_*=G_*+(\iota\circ G\circ\rho)_*=2G_*$ and then $H$ can be lifted to the two-fold covering.

\begin{center}
    \begin{tikzcd}
    &S^1\ar[d]&z\ar[d]\\
    X\ar[r,"H"]\ar[ru,"F", dashed]&S^1&z^2
    \end{tikzcd}
\end{center}

Choose the lift $F$ such that $F(x_0)=1$. Observe that since $f$ is equivariant, $H_{|\partial X}=f\cdot f$, then $F_{|\partial X}=f$. Therefore we only need to prove that $F$ is equivariant. 
Notice that $F\circ\rho$ is a lift of $H\circ\rho=\iota\circ H$. Therefore we have that $(F\cdot F\cdot (F\circ\rho)\cdot (F\circ\rho))=H\cdot \iota H\equiv 1$ and hence $F\cdot (F\circ\rho)\equiv \pm1$.
Since $F(x_0)=f(x_0)=1$ and $f$ is equivariant, we have that $F(x_0)\cdot F(\rho(x_0))=1$ and since $X$ is connected $F\cdot (F\circ\rho)\equiv 1$, i.e.
$F=\iota\circ F\circ \rho$.
\end{proof}

\begin{lemma}\label{3manifold}
Let $\rho:B^4\longrightarrow B^4$ be an orientation preserving, locally linear involution, with fixed-point set homeomorphic to a $2$-disk $D$.
Let $F\subset S^3$ and $\Sigma\subset B^4$ be two oriented locally flat surfaces with $L=\partial F=\partial\Sigma$.
Suppose that both $F$ and $\Sigma$ are $\rho$-invariant and that $\rho$ reverses their orientations.
If both $F$ and $\Sigma$ are disjoint from $\Fix(\rho)$, then there exists a $\rho$-invariant, oriented, compact, locally flat $3$-manifold $M\subset B^4$, disjoint from $\Fix(\rho)$ and such that $\partial M=F\cup\Sigma$.
\end{lemma}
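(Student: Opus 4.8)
The statement asks for an equivariant 3-manifold $M$ cobounding $F$ and $\Sigma$ in $B^4$. Non-equivariantly, the existence of such an $M$ is standard: since $F \cup \Sigma$ is a closed surface in $B^4$ (or rather a surface pushed slightly so that $F$ lies near the boundary), it is null-homologous mod 2 and in fact bounds because $H_3(B^4, S^3) $ is trivial and $[F \cup \Sigma]$ dies; more concretely one uses that a closed oriented surface in $B^4$ disjoint from a point bounds a 3-chain, realized by a 3-manifold via Poincaré–Lefschetz duality and the fact that degree-one cohomology classes of a 4-manifold are represented by maps to $S^1$, with the 3-manifold appearing as a regular preimage of a point. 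The real content here is making this construction \emph{equivariant} and keeping it \emph{disjoint from $\Fix(\rho) = D$}. So the plan is to run the classical "$M$ = preimage of a regular value of a map to $S^1$" argument, but with an equivariant map, using Lemma \ref{equiv_ext} for the extension and the transversality results alluded to in this section for the regular-value step.

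\textbf{Setup and the class in $H^1$.} First I would form the ambient space. Push $F$ slightly into $B^4$ using a collar of $S^3$, equivariantly (the collar can be chosen $\rho$-invariant since $\rho$ preserves $S^3 = \partial B^4$), so that $F$ and $\Sigma$ together with an annulus $L \times I$ in the collar form a single closed oriented surface $\widehat{F} = F \cup (L\times I) \cup \Sigma$ in $B^4$, invariant under $\rho$, disjoint from $D$, and with $\rho$ reversing its orientation. Let $X = B^4 \setminus \nu(\widehat{F})$, an equivariant tubular neighborhood removed; alternatively work with $X = B^4$ and the pair. The meridian of $\widehat F$ gives a class, and because $\widehat F$ is oriented, there is a well-defined class $\alpha \in H^1(B^4 \setminus \widehat F; \Z)$ dual to $\widehat F$, i.e. sending the oriented meridian to $1$. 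I must check $\rho$ acts on this cohomology appropriately: since $\rho$ preserves $\widehat F$ but \emph{reverses} its orientation (and preserves the orientation of $B^4$), $\rho$ reverses the co-orientation of $\widehat F$, hence $\rho^* \alpha = -\alpha$; equivalently, viewing $\alpha$ as (homotopy class of) a map $g : B^4 \setminus \widehat F \to S^1$, the map $g$ is equivariant with respect to the conjugation action $\iota$ on $S^1$. This is exactly the hypothesis format of Lemma \ref{equiv_ext}. (One also needs the boundary-value condition $f(x_0) = 1$ for some fixed point of the action on the boundary — this can be arranged because $D \cap S^3 = \partial D$ lies in the boundary $3$-sphere and is in the complement of $\widehat F$, and on $\Fix(\rho)$ the equivariant map must take values in $\Fix(\iota) = \{\pm 1\}$, so after possibly composing with a rotation we may normalize to $1$; I would phrase this carefully.)

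\textbf{Equivariant extension, then equivariant transversality.} Apply Lemma \ref{equiv_ext} (after verifying the $H^1$-isomorphism hypothesis for the relevant $X$, which holds because $B^4 \setminus \widehat F$ has the rational/mod-structure making its $H^1$ agree with that of its boundary — or I would instead argue directly that the map defined near $\widehat F$ on the meridional boundary extends equivariantly over the complement, since $H^1$ of $B^4$ minus an invariant surface is carried by the boundary tori) to get an equivariant $F_0 : (B^4 \setminus \nu(\widehat F), \rho) \to (S^1, \iota)$ extending the meridional map. Now take a regular value $p \in S^1 \setminus \{\pm 1\}$ of $F_0$ (after an equivariant perturbation of $F_0$ rel boundary to make it smooth/locally flat transverse — this is where the transversality results of this Section \ref{ext_transv} are invoked), chosen so that $\iota(p) = \bar p \ne p$, and set $M_0 = F_0^{-1}(\{p\})$. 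Because $F_0$ is equivariant and $p \ne \bar p$, we get $\rho(M_0) = \rho(F_0^{-1}(p)) = (F_0\circ\rho)^{-1}(p) = (\iota \circ F_0)^{-1}(p) = F_0^{-1}(\bar p)$, which is a \emph{disjoint} copy; that is not invariant. The fix is the standard one: take the preimage of the \emph{arc} $\gamma$ from $p$ to $\bar p$ not passing through $\pm 1$ — no wait, that is also not invariant. Instead take $M_0 = F_0^{-1}(A)$ where $A \subset S^1$ is the closed arc joining $p$ and $\bar p = \iota(p)$ that is disjoint from $1$ (so $A$ contains $-1$); then $\iota(A) = A$, hence $\rho(M_0) = M_0$, and $M_0$ is a compact invariant $3$-manifold with $\partial M_0 = F_0^{-1}(\{p, \bar p\}) \cup (\text{part on } \partial\nu\widehat F)$. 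Capping the meridional part with the obvious $\widehat F \times \text{arc}$ piece inside $\nu(\widehat F)$ (done equivariantly, since $\nu$ is an equivariant tube and $\rho$ reverses the meridian orientation, matching $\iota$ on the arc), I obtain $M \subset B^4$ with $\partial M = \widehat F = F \cup (L\times I)\cup \Sigma$, which after collapsing the collar annulus reads $\partial M = F \cup \Sigma$. Orientability of $M$: it is cut out as a regular preimage of a codimension-zero arc under a map to $S^1$, equivalently the preimage of the point $-1 \in A$, so $M$ carries a natural orientation (or co-orientation from $S^1$) and $\partial M$ inherits the right orientation on $F$ and $\Sigma$ with the signs dictated by the hypotheses. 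Disjointness from $\Fix(\rho) = D$: on $D$ the equivariant map $F_0$ lands in $\Fix(\iota) = \{\pm 1\}$; by connectedness of $D$ it is constantly $1$ or constantly $-1$; after the normalization $F_0|_{\partial D} \equiv 1$ it is constantly $1 \notin A$ (choosing $A$ to avoid $1$), hence $F_0^{-1}(A) \cap D = \varnothing$, so $M \cap D = \varnothing$.

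\textbf{Main obstacle.} The delicate points are: (i) equivariant transversality — perturbing $F_0$ to be transverse to a point while keeping it equivariant and keeping the boundary values fixed, which is not automatic for group actions but works here because the action is free away from $D$ and on $D$ the map already avoids the chosen regular value; this is precisely why Section \ref{ext_transv} develops these tools, and I would cite the relevant lemma there rather than reprove it; (ii) correctly tracking the orientation reversal of $\rho$ on $\widehat F$ through to the statement "$\rho^*\alpha = -\alpha$" so that Lemma \ref{equiv_ext}'s equivariance hypothesis (with the \emph{conjugation} action $\iota$, not the trivial one) is genuinely satisfied — if $\rho$ \emph{preserved} orientations of the surfaces, one would instead need the trivial action on $S^1$ and a different lemma; (iii) the capping-off step near $\nu(\widehat F)$ must be done compatibly with the equivariant structure, which forces the meridian-reversing behavior of $\rho$ to match $\iota$ on the arc $A$ — this is consistent precisely because of the orientation reversal, so everything fits together. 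I expect (i) to be the genuine technical heart and the reason this lemma sits in this section; (ii)–(iii) are bookkeeping once the sign conventions are pinned down.
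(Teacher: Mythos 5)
Your overall strategy --- represent the Poincar\'e dual of $F\cup\Sigma$ by an equivariant map to $(S^1,\iota)$, extend it over the $4$-dimensional complement via Lemma \ref{equiv_ext}, and recover $M$ as a preimage --- is exactly the paper's, and your sign bookkeeping ($\rho$ reverses the orientation of the surface, hence reverses meridians, hence the map is conjugation-equivariant) is correct. The problem is the step where you actually produce $M$. You take a regular value $p\in S^1\setminus\{\pm1\}$, observe correctly that $F_0^{-1}(p)$ is not $\rho$-invariant, and then set $M_0=F_0^{-1}(A)$ for the $\iota$-invariant arc $A$ joining $p$ to $\bar p$. But $A$ is a codimension-zero subset of $S^1$, so $F_0^{-1}(A)$ is a $4$-manifold with boundary, not a $3$-manifold; it cannot have $\partial M_0 = F\cup\Sigma$. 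The other readings of your construction fail too: $F_0^{-1}(\{p,\bar p\})$ is $\rho$-invariant and $3$-dimensional, but its boundary is \emph{two} parallel copies of $F\cup\Sigma$; and your parenthetical ``equivalently the preimage of the point $-1\in A$'' is the preimage of an $\iota$-fixed value, which is precisely the case you excluded at the outset to dodge the transversality problem. So as written the core construction does not produce the required $M$.

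The paper resolves this by embracing the fixed value rather than avoiding it: the target point is $1\in\Fix(\iota)$, so $g^{-1}(X\times 1)$ is automatically $\rho$-invariant, and equivariant transversality at that value is obtained by a covering-space trick. One first excises an equivariant neighbourhood of $D$ (as well as of $\Sigma$) from $B^4$, so that $\rho$ acts \emph{freely} on the resulting $X$; then the map $\id_X\times f$ descends to $\overline{f}:X/\rho\to(X\times S^1)/(\rho\times\iota)$ between quotients, where ordinary (non-equivariant) topological transversality rel boundary \cite[Theorems 10.3 and 10.8]{friedl2019survey} applies; lifting the resulting homotopy back up the double cover yields an equivariant $g$ transverse to $X\times 1$, and $M=g^{-1}(X\times 1)$. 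Note that excising $N(D)$ is doing double duty: it makes the action free (so the quotient trick is available at all) and it makes $M\cap\Fix(\rho)=\emptyset$ automatic, replacing your more delicate argument about the constant value of $F_0$ on $D$ (which, as you set things up with the map equal to $-1$ away from the surface and $-1\in A$, would actually land $D$ inside your preimage). You correctly identified equivariant transversality as the technical heart; the missing idea is that one achieves it by passing to the free quotient, and that this is only possible after removing $\Fix(\rho)$ from the ambient space.
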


\begin{proof}
Let $N(D)$ and $N(\Sigma)$ be equivariant, closed tubular neighbourhoods of $D$ and $\Sigma$ in $B^4$.
Let $X=B^4\setminus(\int N(D)\cup\int N(\Sigma))$.

Let $Y$ be the complement in $S^3$ of an equivariant tubular neighbourhood of $\partial F\cup\partial D$.
Let $N\cong F\times[-1,1]$ be and equivariant tubular neighbourhood of $F$ in $Y$. Observe that the restriction of $\rho$ acts on $N$ as
$$\rho:N\longrightarrow N$$
$$(x,t)\longmapsto (\rho(x),-t).$$

Let $\phi:\R\longrightarrow\R$ be a smooth, odd map such that $\phi'\geq0$ and
$$\phi(x)=\begin{cases}x\text{ for }|x|\leq1/2\\
\sign(x)\text{ for }|x|\geq2/3\end{cases}.$$
Then we can define an equivariant map
$$f:(N,\rho)\longrightarrow (S^1,\iota)$$
$$(x,t)\longmapsto e^{\pi i\phi(t)}$$
and we can extend it to $Y$ by setting $f$ to be $-1$ outside $N$.

Such $f$ is topologically transverse to $1\in S^1$ in the sense of \cite[Definition 10.7]{friedl2019survey} and $f^{-1}(1)$ is given by the union of $F$ and a nearby copy of $\Sigma$. Using Lemma \ref{equiv_ext} we can extend $f$ to an equivariant map
$$f:(X,\rho)\longrightarrow (S^1,\iota),$$
which in turn gives us the equivariant map
\begin{gather*}
\id_X\times f:(X,\rho)\longrightarrow(X\times S^1,\rho\times\iota)\\
x\longmapsto(x,f(x)).
\end{gather*}
Consider now the quotient spaces of $X$ and $X\times S^1$ by the respective involutions. We have the following commutative diagram
\begin{center}
\begin{tikzcd}
    X\times S^1\ar[d,"q"]\ar[r]&X\ar[d,"p"]\\
    (X\times S^1)/(\rho\times\iota)\ar[r]& X/\rho
\end{tikzcd}
\end{center}
where the vertical maps are $2$-fold regular covering maps,
Observe that the map $\id_X\times f$ induces a map between the quotients
$$
\overline{f}:X/\rho\longrightarrow (X\times S^1)/(\rho\times\iota).
$$
By construction $\overline{f}_{|\partial(X/\rho)}$ is topologically transverse to $(\partial X\times 1)/(\rho\times\iota)$.
According to \cite[Theorems 10.3 and 10.8]{friedl2019survey}, $\overline{f}$ is homotopic relative to the boundary to a map $\overline{g}$ transverse to $(X\times 1)/(\rho\times\iota)$.
Since $\overline{g}$ is homotopic to $\overline{f}$, we can lift $\overline{g}$ to an equivariant map
$$
g:(X,\rho)\longrightarrow(X\times S^1,\rho\times\iota)
$$
with $g_{|\partial X}=\id_{\partial X}\times f$ and $g$ topologically transverse to $X\times 1$.
Finally, $M=g^{-1}(X\times 1)$ is an equivariant, compact, orientable, locally flat $3$-dimensional submanifold of $X$ with $\partial M=F\cup\Sigma$.
\end{proof}

\section{Equivariant algebraic concordance}\label{sect:equiv_alg_conc}
In this section, we define an equivariant algebraic concordance group $\widetilde{\G}^\Z$ and a homomorphism $\Phi:\C\longrightarrow\widetilde{\G}^\Z$.
We compare $\widetilde{\G}^\Z$ with the equivariant algebraic concordance group $\AC$ defined in \cite{miller_powell}.
Finally, we use $\widetilde{\G}^\Z$ to obtain a lower bound on the equivariant slice genus of a strongly invertible knot.

\subsection{Equivariant Seifert systems}

\begin{defn}\label{inv_seifert_surface}
Let $(K,\rho,h)$ be a directed strongly invertible knot. An \emph{invariant Seifert surface of type $n$} for $K$ is a connected, orientable surface $F\subset S^3$ such that:
\begin{itemize}
    \item $F$ is $\rho$-invariant i.e. $\rho(F)=F$,
    \item $h=\Fix(\rho)\cap F$,
    \item the surface $\widehat{F}$ obtained from $F$ by equivariantly cutting along $h$ is a $\rho$-invariant Seifert surface for $\widehat{L}_b^n(K)$.
\end{itemize}
\end{defn}

\begin{prop}\label{stab_surface}
For any directed strongly invertible knot $K$ and every $n\in\Z$ there exists an invariant Seifert surface of type $n$.
\end{prop}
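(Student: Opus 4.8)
The plan is to reduce the statement to the construction of a $\rho$-invariant Seifert surface for the butterfly link, and then to recover $F$ by re-attaching the band used in Definition~\ref{butterfly_link}.

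First I would record the relevant properties of that band $B$: it is equivariant, parallel to $h$, meets $\Fix(\rho)$ exactly in $h$, the restriction $\rho|_B$ is the reflection fixing the core $h$, and the two arcs $\gamma,\gamma'\subset\partial B\setminus K$ lie on the two distinct components of $\widehat{L}_b^n(K)$, carrying opposite orientations relative to $h$. Granting (Step~2 below) a $\rho$-invariant Seifert surface $\widehat F$ for $\widehat{L}_b^n(K)$ that is disjoint from $\Fix(\rho)$, I set $F:=\widehat F\cup_{\gamma\cup\gamma'}B$. Then $F$ is $\rho$-invariant; since $\widehat F$ misses $\Fix(\rho)$ while $B\cap\Fix(\rho)=h$ one gets $\Fix(\rho)\cap F=h$; reversing the band move gives $\partial F=K$; and since $\gamma,\gamma'$ are oriented oppositely relative to $h$, the orientation of $\widehat F$ extends over $B$, so $F$ is orientable. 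Discarding closed components and keeping the one containing $K$ makes $F$ connected. Finally, $B$ is a bicollar of $h$ in $F$, so cutting $F$ along $h$ returns $\widehat F$ up to isotopy, which is a Seifert surface for $\widehat{L}_b^n(K)$; hence $F$ is an invariant Seifert surface of type $n$, and it remains only to produce $\widehat F$.

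For Step~2 I would use an equivariant diagram of $\widehat{L}_b^n(K)$ in which $\rho$ acts as the rotation by $\pi$ about a point $O$ of the plane, $\Fix(\rho)$ being the vertical line through $O$; since $\widehat{L}_b^n(K)$ is disjoint from $\Fix(\rho)$ its projection misses $O$. I orient the link by the semi-orientation of $\widehat{L}_b^n(K)$ and note that $\rho$ \emph{reverses} this orientation (it swaps the components compatibly with $B$ being reflected in $h$; this is also forced, since $F$ must be orientable with $\rho$ reversing $\partial F=K$). Now apply Seifert's algorithm. As the oriented resolution of a crossing is insensitive to reversing all orientations, $\rho$ permutes the Seifert circles; but a $\rho$-invariant Seifert circle would have to encircle $O$ and be carried to itself by the rotation, hence with $\rho$ orientation-preserving on it, contradicting that $\rho$ reverses the orientation of the link and of its Seifert circles. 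Thus no Seifert circle is $\rho$-invariant, and since two distinct disjoint circles that both encircle $O$ must be nested — impossible inside a single $\rho$-orbit — no Seifert circle encircles $O$ at all. Consequently the resulting Seifert surface $\widehat F$ lies in a neighbourhood of the diagram avoiding $O$, hence after a small isotopy is disjoint from $\Fix(\rho)$; it is $\rho$-invariant and $\rho$ reverses its orientation, since $\rho$ sends the disk of a Seifert circle $c$ to the disk of $\rho(c)$ with the opposite orientation. This is the $\widehat F$ needed in Step~1, and since the construction is uniform in $n$ — each $\widehat{L}_b^n(K)$ is a $2$-periodic $2$-component link, disjoint from $\Fix(\rho)$, on which $\rho$ is orientation-reversing — it yields an invariant Seifert surface of type $n$ for every $n\in\Z$.

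The only delicate point, and the one I expect to be the main obstacle, is ensuring that $\widehat F$ is disjoint from $\Fix(\rho)$; the mechanism that makes it work is precisely that $\rho$ reverses the orientation of $\widehat{L}_b^n(K)$, which is what rules out $\rho$-invariant Seifert circles (and is in turn forced by the requirement that $\rho$ reverse the orientation of $K=\partial F$).
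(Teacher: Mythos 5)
Your argument is correct, but it takes a genuinely different route from the paper's. The paper's proof is a short stabilization argument: it quotes \cite{hirasawa_hiura_sakuma} for the existence of a $\rho$-invariant Seifert surface $F$ with $\Fix(\rho)\cap F=h$, observes that such an $F$ is an invariant Seifert surface of some type $m\in\Z$, and then corrects the type to any prescribed $n$ by equivariant boundary connected sum with copies of the explicit type $-1$ surface $G$ for the unknot in Figure \ref{unknot_surface} (or its mirror, which has type $+1$). You instead build the surface from scratch: an equivariant Seifert algorithm on a $\pi$-rotation-symmetric diagram of $\widehat{L}_b^n(K)$, where the key point is that $\rho$ reverses the semi-orientation of $\widehat{L}_b^n(K)$, so no Seifert circle can be $\rho$-invariant, and hence (by your nesting argument) no Seifert circle encircles the projection of the axis; the resulting surface therefore misses $\Fix(\rho)$, and reattaching the band produces the type $n$ surface directly. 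Your route is self-contained --- it in effect reproves the needed case of the input from \cite{hirasawa_hiura_sakuma} --- and treats all $n$ uniformly, at the cost of having to verify that the Seifert-algorithm surface can be built equivariantly (crossings come in free $\rho$-orbits and the nesting depth of Seifert circles is $\rho$-invariant, so the disks and bands can be assigned symmetrically; it is also cleaner to start from a connected equivariant diagram than to discard components afterwards). The paper's route is shorter and has the side benefit of isolating the stabilizing surface $G$, which is reused later (for instance in Theorem \ref{lower_bound_genus} and Corollary \ref{cor:indep_jump}).
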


\begin{proof}
From \cite{hirasawa_hiura_sakuma} we know that for any $(K,\rho,h)$ there exists a $\rho$-invariant Seifert surface $F$ such that $\Fix(\rho)\cap F=h$.
Cutting $F$ along $h$ we obtain a (possibly disconnected) orientable surface $\widetilde{F}$ and the linking number between the components of $\partial\widetilde{F}$ would not be generally $-n$.
Now let $G$ be the equivariant Seifert surface for the unknot described in Figure \ref{unknot_surface}.

\begin{figure}[ht]
\centering
\begin{tikzpicture}

\node[anchor=south west,inner sep=0] at (0,0){\includegraphics[scale=0.6]{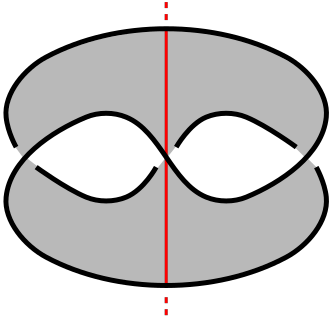}};

\end{tikzpicture}
    \caption{The invariant Seifert surface $G$ for the unknot.}
    \label{unknot_surface}
\end{figure}
Observe that by cutting $G$ along the fixed-point set we obtain a Seifert surface for a link with linking number $+1$ between its components.
In other words, $G$ is an invariant Seifert surface of type $-1$ for the unknot.
Therefore, by taking the equivariant connected sum of $F$ with an appropriate number of copies of $G$ and/or its mirror image, we easily get an invariant Seifert surface of type $n$ for $K$.

\end{proof}

As a consequence of Lemma \ref{3manifold}, we have the following proposition.

\begin{prop}\label{alg_slice_link}
Let $(K,\rho,h)$ be a directed strongly invertible knot and $F$ be an equivariant Seifert surface for $\widehat{L}_b^n(K)$. Suppose that $\widehat{L}_b^n(K)$ bounds an orientable surface $\Sigma\subset B^4$ invariant under an involution of $B^4$ extending $\rho$ (which we still denote by $\rho$). Assume that $\rho$ has no fixed point on $\Sigma\cup F$. Denote by $g_F$ and $g_{\Sigma}$ the genus of $F$ and $\Sigma$ respectively. Then there exists a $\rho_*$-invariant submodule $H\subset H_1(F,\Z)$ such that:
\begin{itemize}
    \item $\rank H\geq g_F-g_{\Sigma}$ if $\Sigma$ is connected and $\rank H\geq g_F-g_{\Sigma}+1$ if $\Sigma$ is not connected,
    \item the Seifert form of $F$ vanishes on $H$,
    \item for every $\alpha\in H$, the linking number between $\alpha$ and the fixed axis is zero.
\end{itemize}
\end{prop}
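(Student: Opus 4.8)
The plan is to mimic the classical proof that the Seifert form vanishes on a half-rank subspace coming from a slice surface, but carried out equivariantly using the three-manifold $M$ produced by Lemma \ref{3manifold}. First I would apply Lemma \ref{3manifold} with the roles played by $\widehat{F}$ (the Seifert surface $F$ for $\widehat{L}_b^n(K)$, after checking the hypotheses on disjointness from $\Fix(\rho)$) and $\Sigma$, obtaining a $\rho$-invariant, oriented, compact, locally flat $3$-manifold $M\subset B^4$ with $\partial M=F\cup\Sigma$ and $M\cap\Fix(\rho)=\emptyset$, on which $\rho$ reverses orientation. The submodule $H$ will be defined as the image of the map $H_1(F,\Z)\to H_1(M,\Z)$'s kernel — more precisely $H=\ker\bigl(H_1(F,\Z)\to H_1(M,\Z)\bigr)$, or the part of it that survives; since $\rho$ maps $F$ to $F$ and $M$ to $M$ compatibly, this kernel is automatically $\rho_*$-invariant, which gives the first required property of $H$ for free. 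That is the equivariance input, and it is essentially immediate once $M$ exists.

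Next I would establish the rank bound. The standard half-lives-half-dies argument applied to the pair $(M,F)$ (or $(M,\partial M)$ with $\partial M=F\sqcup\Sigma$) shows that the kernel of $H_1(F)\to H_1(M)$ has rank at least half the rank of $H_1(F)$; combining with the analogous statement for $\Sigma$ and a Mayer--Vietoris / long-exact-sequence bookkeeping for $\partial M=F\cup\Sigma$ yields $\rank H\ge g_F-g_\Sigma$, with the extra $+1$ when $\Sigma$ is disconnected because then $H_1(\Sigma)$ has correspondingly larger rank relative to its genus (an extra generator for each additional boundary-capping component), exactly as in the classical inequality relating slice genus and Seifert genus. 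I would phrase this using the fact that $F$ and $\Sigma$ have one boundary component each after capping off $\widehat{L}_b^n(K)$... here one must be slightly careful because $\widehat{L}_b^n(K)$ is a $2$-component link, so $F$ and $\Sigma$ each have two boundary circles; I would track the resulting Euler-characteristic/rank constants carefully, but this is routine linear algebra over $\Z$ modulo torsion.

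For the vanishing of the Seifert form on $H$: given $\alpha,\beta\in H$, represent them by $1$-cycles in $F$ that bound $2$-chains in $M$. Pushing $\beta$ off $F$ in the positive normal direction gives a cycle $\beta^+$ in $B^4\setminus M$ near $F$; since $\alpha$ bounds in $M$ and $\beta^+$ can be capped in the complement, the Seifert linking number $\theta_F(\alpha,\beta)=\lk(\alpha^+,\beta)$ is computed by an intersection number in $B^4$ of a $2$-chain bounded by $\alpha$ (pushed into $B^4\setminus M$) against a $3$-chain, which vanishes because everything can be arranged disjoint from $M$ — this is the usual argument that $M$ being a genuine (codimension-one) null-cobordism of $F$ forces the Seifert form to die on classes bounding in $M$. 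I would cite the classical computation (as in Levine, or the presentation in a standard knot-concordance reference) and only indicate the modification. Finally, the linking-number-with-the-axis condition: for $\alpha\in H$, $\alpha$ bounds a $2$-chain $\Gamma$ in $M$, and since $M\cap\Fix(\rho)=\emptyset$ while $\Fix(\rho)$ is a properly embedded disk $D$ in $B^4$ bounding the axis in $S^3$, the linking number of $\alpha$ with the axis equals the intersection number $\Gamma\cdot D=0$; equivalently, $[\alpha]=0$ in $H_1(S^3\setminus\text{axis})$ because $\alpha$ bounds in $B^4\setminus D$. The main obstacle I anticipate is getting the rank inequality sharp with the correct disconnected-$\Sigma$ correction and handling the two boundary components of $F$ and $\Sigma$ correctly in the half-lives-half-dies count; the equivariance and the two linking-number vanishing statements are comparatively formal once $M$ is in hand.
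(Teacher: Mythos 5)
Your proposal follows essentially the same route as the paper: apply Lemma \ref{3manifold} to produce the equivariant $3$-manifold $M$, take $H$ to be (the saturation of) the kernel of $H_1(F,\Z)\to H_1(M,\Z)$, get the rank bound from half-lives-half-dies applied to $\partial M=F\cup\Sigma$ with the genus bookkeeping for the two cases of $\Sigma$ connected or not, invoke the classical vanishing of the Seifert form on classes bounding in $M$, and kill the linking number with the axis via $\#(\Delta\cap D)=0$ since $M\cap\Fix(\rho)=\emptyset$. The only imprecision is your intermediate claim that half-lives-half-dies applied to $(M,F)$ alone gives a kernel of rank at least $\tfrac{1}{2}\rank H_1(F)$ --- the principle applies to the full boundary $\partial M$, not to the piece $F$, and the correct bound $\rank H\geq g_F-g_\Sigma$ only emerges from the combined count on $\partial M$ that you describe next, which is exactly the computation carried out in the paper.
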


\begin{proof}
By Lemma \ref{3manifold} there exists a $\rho$-invariant oriented $3$-manifold $M\subset B^4$, such that $\partial M=F\cup\Sigma$ and $M\cap\Fix(\rho)=\emptyset$.

Denote by $V$ the kernel of
$H_1(\partial M,\Q)\longrightarrow H_1(M,\Q).$ It is easy to see that $2\cdot\dim V=\dim H_1(\partial M,\Q)=\genus(\partial M)$, by standard duality argument (half-lives, half-dies principle) or by computing the Euler characteristic of the exact sequence of the couple $(M,\partial M)$.

Suppose now that $\Sigma$ is connected. Then it is easy to see that $\genus(\partial M)=g_F+g_{\Sigma}+1$ and that the map induced by the inclusion $i_*:H_1(F,\Q)\longrightarrow H_1(\partial M,\Q)$ is injective.
Since $\dim V=g_F+g_{\Sigma}+1$ and $\dim H_1(F,\Q)=2g_F+1$, we have that the preimage $W$ of $V$ in $H_1(F,\Q)$ has dimension at least $g_F-g_{\Sigma}$.

Suppose now that $\Sigma$ is not connected. Then $\genus(\partial M)=g_F+g_{\Sigma}$ and the map induced by the inclusion $i_*:H_1(F,\Q)\longrightarrow H_1(\partial M,\Q)$ has kernel of dimension $1$.
Since $\dim V=g_F+g_{\Sigma}$ and $\dim H_1(F,\Q)=g_F+1$, we have that the preimage $W$ of $V$ in $H_1(F,\Q)$ has dimension at least $1+g_F-g_{\Sigma}$.

Define $H\subset H_1(F,\Z)$ to be
$$H=\{x\in H_1(F,\Z)\;|\;\exists n\in\Z, n\neq 0, nx\in W\}.$$

It is a well-known fact that the Seifert form of $F$ is identically zero on $H$. Since all of the maps considered are equivariant, we get that also $H$ is invariant under the action of $\rho_*$ on $H_1(F,\Z)$.

By the considerations above, the rank of $H$ satisfies the inequalities stated in the proposition.

Finally, let $\alpha\in H$ and let $\Delta\subset M$ be a $2$-chain such that $\partial \Delta=n\alpha$ for some integer $n\neq0$. Since $M$ is disjoint from the disk $D$ of fixed points, it follows that $\lk(n\alpha, \partial D)=\#(\Delta\cap D)=0$, hence $\lk(\alpha,\partial D)=0$.
\end{proof}

We use now the result given Proposition \ref{alg_slice_link} to define a notion of equivariant algebraic concordance for directed strongly invertible knots.

\begin{defn}
Let $R$ be a commutative and unital ring. An \emph{equivariant Seifert system} is a tuple $(\theta,\rho,h,\widetilde{\lk})$, where

\begin{itemize}
\item $\theta:M\times M\longrightarrow \Z$ is a bilinear form on a free $\Z$-module $M$ of even rank,
\item $\rho:M\longrightarrow M$ is a linear involution,
\item $\theta(\rho(x),\rho(y))=\theta^t(x,y):=\theta(y,x)$ for every $x,y\in M$,
\item $\theta-\theta^t$ is unimodular,
\item $h,\widetilde{\lk}\in\Hom(M,\Z)$,
\item $h\circ\rho=-h$,
\item $\widetilde{\lk}\circ\rho=\widetilde{\lk}$.
\end{itemize}

An equivariant Seifert system $(\theta,\rho,h,\widetilde{\lk})$ on $M$ is said to be \emph{equivariantly metabolic} if there exists a submodule $H\subset M$ such that
\begin{itemize}
\item $\rank M=2\cdot\rank H$
    \item $\rho(H)=H$, i.e. $H$ is \emph{$\rho$-invariant},
    \item $\theta$ is identically zero on $H\times H$,
    \item $H\subset \ker(h)\cap\ker(\widetilde{\lk})$.
\end{itemize}
\end{defn}

Let $(K,\rho,h)$ be a directed strongly invertible knot and let $F$ be an invariant Seifert surface for $K$ of type $n$ for some $n$. Fix an auxiliary orientation on $F$.

We see now how $F$ determines an equivariant Seifert system.
Since $\rho$ reverses the orientation on $F$, it is immediate to check that $\theta_F(\rho_*(x),\rho_*(y))=\theta_F(y,x)$ for every $x,y\in H_1(F,\Z)$, where $\theta_F$ is the Seifert form of $F$. Since $h\subset F$, we have that $h$ represents a class in $H_1(F,\partial F,\Z)$. By duality and universal coefficients, we can consider $h$ as a homomorphism $h:H_1(F,\Z)\longrightarrow \Z$, which maps an oriented curve $c$ in $F$ to the algebraic intersection $\#(c\cap h)$.
Finally, let $A$ be the oriented fixed axis of $\rho$. Then we have a homomorphism
$$\widetilde{\lk}:H_1(F,\Z)\longrightarrow \Z$$
$$c\longmapsto \lk(c^+,A)+\lk(c^-,A),$$
where the $c^{\pm}$ is a nearby copy of $c$ outside $F$ in the positive/negative direction.
It is immediate to check that the tuple $(\theta_F,\rho_*,h,\widetilde{\lk})$ is an equivariant Seifert system.
We will denote by $\Sy(F)$ the equivariant Seifert system determined by $F$.

\begin{defn}\label{orth_sum}
Let $(\theta_i,\rho_i,h_i,\widetilde{\lk}_i)$ for $i=1,2$ be two equivariant Seifert systems defined over $M$ and $N$ respectively. Their \emph{orthogonal sum} $(\theta_1,\rho_1,h_1,\widetilde{\lk}_1)\oplus(\theta_2,\rho_2,h_2,\widetilde{\lk}_2)$ is the tuple $(\theta,\rho,h,\widetilde{\lk})$ defined by
$$\theta:(M\oplus N)\times(M\oplus N)\longrightarrow\Z$$
$$((x_1,x_2),(y_1,y_2))\longmapsto \theta_1(x_1,y_1)+\theta_2(x_2,y_2)$$
$$\rho:M\oplus N\longrightarrow M\oplus N$$
$$\rho(x,y)=(\rho_1(x),\rho_2(y))$$
$$h,\widetilde{\lk}:M\oplus N\longrightarrow\Z$$
$$h(x,y)=h_1(x)+h_2(y)$$
$$\widetilde{\lk}(x,y)=\widetilde{\lk}_1(x)+\widetilde{\lk}_2(y).$$
We say that $(\theta_i,\rho_i,h_i,\widetilde{\lk}_i)$, $i=1,2$ are \emph{equivariantly concordant} if
the orthogonal sum between $(\theta_1,\rho_1,h_1,\widetilde{\lk}_1)$ and $(-\theta_2^t,\rho_2,h_2,\widetilde{\lk}_2)$ is equivariantly metabolic.
\end{defn}

\begin{defn}\label{equiv_alg_conc}
We define the \emph{equivariant algebraic concordance group} $\widetilde{\G}^{\Z}$ to be the set of equivalence classes of equivariant Seifert systems up to equivariant concordance. It is not difficult to prove that the operation of orthogonal sum defines a group structure on $\widetilde{\G}^{\Z}$, by adapting the proof of Levine \cite{Levine1969a,Levine1969b} in the case of the classical algebraic concordance.
\end{defn}

\begin{thm}\label{en_alg_conc}
Let $F\subset S^3$ be an invariant Seifert surface of type $0$ for a directed strongly invertible knot $(K,\rho,h)$ and choose an orientation on $F$. The class of the equivariant Seifert system $\Sy(F)$ in $\widetilde{\G}^{\Z}$ depends only on the equivariant concordance class of $K$.
In particular, we have a well-defined group homomorphism
$$\Phi:\C\longrightarrow\widetilde{\G}^{\Z}$$
$$[K,\rho,h]\longmapsto[\Sy(F)].$$
\end{thm}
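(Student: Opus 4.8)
The plan is to verify two things: first, that the class $[\Sy(F)]\in\widetilde{\G}^\Z$ does not depend on the choices involved in producing it from $K$ (the surface $F$ and its orientation), and second, that it is invariant under equivariant concordance of $K$. Well-definedness with respect to orientation is immediate: reversing the orientation of $F$ replaces $\theta_F$ by $\theta_F^t$, $h$ by $-h$ and $\widetilde{\lk}$ by $-\widetilde{\lk}$, and one checks directly that $(-\theta_F^t,\rho_*,-h,-\widetilde{\lk})\cong(\theta_F,\rho_*,h,\widetilde{\lk})$ after the sign change is absorbed (this is the same elementary manipulation as in Levine's classical argument). Independence of the choice of invariant Seifert surface of type $0$ will follow from the concordance invariance statement applied to $K$ concordant to itself: two such surfaces $F_0,F_1$ for the same knot can be glued along $K\times\{0,1\}$ inside $S^3\times I$ to a closed surface, or more directly one runs the Proposition~\ref{alg_slice_link} machinery with $\Sigma$ a product annulus.

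For the main step, suppose $(K_0,\rho_0,h_0)$ and $(K_1,\rho_1,h_1)$ are equivariantly concordant, with invariant Seifert surfaces $F_0,F_1$ of type $0$. I would first translate the concordance of the knots into a concordance of the $0$-butterfly links: by Proposition~\ref{link_concordance}, $\widehat L_b^0(K_0)$ is equivariantly concordant to $\widehat L_b^0(K_1)$, so their difference (mirror-reverse of one union the other) bounds a pair of disjoint equivariant annuli in $S^3\times I$, hence $\widehat L_b^0(K_0)\,\#\,-\widehat L_b^0(K_1)$ is equivariantly slice. Equivalently, after cutting $F_0$ and $F_1$ along $h_0,h_1$ we get equivariant Seifert surfaces $\widehat F_0,\widehat F_1$ for $\widehat L_b^0(K_0),\widehat L_b^0(K_1)$, and the glued-up surface $\widehat F=\widehat F_0\cup(-\widehat F_1)$ together with the bounding surface $\Sigma$ in $B^4$ built from the concordance annuli satisfies the hypotheses of Proposition~\ref{alg_slice_link}: $\Sigma$ is orientable, $\rho$-invariant, disjoint from the fixed disk, and $\rho$ reverses orientations. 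Here I must be slightly careful to arrange $\Sigma$ disjoint from $\Fix(\rho)$ — since $\widehat L_b^0$ has no component fixed setwise (the involution swaps the two components) this can be done, pushing $\Sigma$ off the fixed disk as in the proof of Lemma~\ref{3manifold}.

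Now apply Proposition~\ref{alg_slice_link}: since $\Sigma$ can be taken to be a disjoint union of two annuli, $g_\Sigma=0$ and $\Sigma$ is disconnected, so we obtain a $\rho_*$-invariant submodule $H\subset H_1(\widehat F,\Z)$ with $\rank H\ge g_{\widehat F}+1$, on which the Seifert form vanishes and on which $\widetilde{\lk}$ vanishes. Reinterpreting $H_1(\widehat F)$ in terms of $H_1(F_0)\oplus H_1(F_1)$ — the passage from the cut-open surface back to $F_i$ changes the first homology in a controlled way, and $h_i$ is exactly the functional detecting the class of the cutting arc — one checks that $g_{\widehat F}+1$ equals half the rank of $H_1(F_0)\oplus H_1(F_1)$, so $H$ is a metabolizer of the right size, $\rho$-invariant, isotropic for $\theta_{F_0}\oplus(-\theta_{F_1}^t)$, and contained in $\ker h\cap\ker\widetilde{\lk}$. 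That is precisely the statement that $\Sy(F_0)$ and $\Sy(F_1)$ are equivariantly concordant, so $\Phi$ is well-defined. Finally, additivity $\Phi(K\widetilde\#J)=\Phi(K)+\Phi(J)$ follows because an invariant Seifert surface of type $0$ for $K\widetilde\# J$ can be built as a boundary-connected sum of type-$0$ surfaces for $K$ and $J$ along the half-axes, realizing $\Sy(F_K)\oplus\Sy(F_J)$; the only subtlety is the bookkeeping of the linking-number-with-axis functional $\widetilde{\lk}$ under the connected sum, which is why the type-$0$ condition is imposed.

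I expect the main obstacle to be the homological bookkeeping in the third paragraph: relating $H_1$ of the cut-open surface $\widehat F$ (where Proposition~\ref{alg_slice_link} naturally lives) to $H_1(F_0)\oplus H_1(F_1)$ with the functionals $h$ and $\widetilde{\lk}$ correctly matched up, and checking that the rank bound $g_{\widehat F}+1$ is exactly what is needed for a genuine metabolizer rather than merely an isotropic subspace of slightly-too-small rank. Everything else is a routine adaptation of Levine's proof that the classical algebraic concordance class is well-defined, carried out $\rho$-equivariantly.
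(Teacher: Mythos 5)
Your proposal is correct and follows essentially the same route as the paper's proof: additivity of $\Sy$ under equivariant boundary connected sum reduces well-definedness to showing that $\Sy(F)$ is equivariantly metabolic when the relevant knot is equivariantly slice, which is exactly the application of Proposition \ref{alg_slice_link} with $\Sigma$ disconnected of genus zero, followed by identifying $H_1(\widehat{F},\Z)$ with $\ker(h)\subset H_1(F,\Z)$ and noting that $\widetilde{\lk}$ vanishes on the resulting submodule. One minor bookkeeping correction: under orientation reversal of $F$ the functional $\widetilde{\lk}$ is unchanged rather than negated (it is the sum of the linking numbers of both pushoffs), and the isomorphism $(\theta_F^t,\rho_*,-h,\widetilde{\lk})\cong(\theta_F,\rho_*,h,\widetilde{\lk})$ is realized by $\rho_*$ itself.
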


\begin{proof}
Let $G$ be an invariant surface of type $0$ for another directed strongly invertible knot $J$. We can equivariantly perform the connected sum of $F$ and $G$ along their boundary so that $F\natural G$ is an invariant Seifert surface of type $0$ for the $K\widetilde{\#}J$. It is immediate to see that $\Sy(F\natural G)=\Sy(F)\oplus\Sy(G)$.
Therefore, to prove that the homomorphism $\Phi$ is well defined it is sufficient to show that $\Sy(F)$ is equivariantly metabolic whenever the knot $K=\partial F$ is equivariantly slice.

Let $\widehat{F}$ be the equivariant Seifert surface for $\widehat{L}_b^0(K)$ obtained by cutting $F$.
By Proposition \ref{alg_slice_link} there exists a $\rho_*$-invariant submodule $H$ of $H_1(\widehat{F},\Z)$, such that $2\rank H=\rank H_1(\widehat{F},\Z)+1$ and the Seifert form of $\widehat{F}$ vanishes on it.

Observe that we can regard $H_1(\widehat{F},\Z)$ as a $\rho_*$-invariant codimension $1$ submodule in $H_1(F,\Z)$ through the map induced by the inclusion. Moreover, $H_1(\widehat{F},\Z)$ is easily identified with the kernel of $h:H_1(F,\Z)\longrightarrow\Z$.
The restriction of the Seifert form of $F$ on $H_1(\widehat{F},\Z)$ clearly coincides with the Seifert form of $\widehat{F}$.
Again by Proposition \ref{alg_slice_link}, the linking number homomorphism $\widetilde{\lk}:H_1(F,\Z)\longrightarrow\Z$ vanishes on $H$.
Therefore $H$ is an equivariant metabolizer for the equivariant Seifert system $\Sy(F)$.
\end{proof}

\begin{remark}
Let $G$ be the equivariant Seifert surface of type $-1$ for the unknot described in Figure \ref{unknot_surface}.
Let $F$ be an invariant Seifert surface of type $n$ for a directed strongly invertible knot $(K,\rho,h)$.
Then by the proof of Theorem \ref{en_alg_conc} and Proposition \ref{stab_surface} follows easily that we can compute the equivariant algebraic concordance class of $K$ by
$$\Phi(K)=[\Sy(F)+n\Sy(G)]\in\widetilde{\G}^\Z.$$

Similarly, observe that in order to compute $\Phi(K)$ it is not relevant in Definition \ref{inv_seifert_surface} that $F\setminus h$ is connected. In fact, suppose that $F\setminus h$ is not connected. Then $F\natural G\natural\overline{G}$ is an equivariant Seifert surface of type $0$ for $K$, where $\overline{G}$ is the mirror image of the surface $G$ in Figure \ref{unknot_surface}. Hence, by definition $\Phi(K)=[\mathcal{S}(F\natural G\natural\overline{G})]=[\mathcal{S}(F)]+[\mathcal{S}(G)]+[\mathcal{S}(\overline{G})].$ Since $[\mathcal{S}(\overline{G})]=-[\mathcal{S}(G)],$ it follows that $\Phi(K)=[\mathcal{S}(F)]$.
\end{remark}

\begin{prop}
Let $\mathcal{A}$ be the concordance group of algebraically slice knots, i.e. the kernel of $\phi:\cc\longrightarrow\mathcal{G}^\Z$.
Then the kernel of $\Phi:\C\longrightarrow\widetilde{\G}^\Z$ contains a copy of $\mathcal{A}$, namely $\mathfrak{r}(\mathcal{A})\subset\ker(\Phi)$.
\end{prop}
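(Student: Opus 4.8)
The plan is to evaluate $\Phi$ on $\mathfrak{r}(J)$ for $J\in\mathcal{A}$ by exhibiting an explicit invariant Seifert surface of type $0$ built out of a Seifert surface $F_J$ for $J$ that realizes the algebraic sliceness of $J$. Recall that $\mathfrak{r}(J)=J\#r(J)$ with the involution $\rho$ that swaps the two summands, and that in the construction of Definition~\ref{double} the half-axis $h$ is the portion of the connecting band $B$ lying on $\Fix(\rho)$. Starting from $F_J$, placed inside a ball disjoint from the axis, I would take $\widetilde{F}:=F_J\,\natural\,\rho(F_J)$, the boundary connected sum performed along the band $B$. Then $\widetilde{F}$ is a connected $\rho$-invariant orientable surface with $\Fix(\rho)\cap\widetilde{F}=h$, and cutting along $h$ recovers the (disconnected) surface $F_J\sqcup\rho(F_J)$, which is a $\rho$-invariant Seifert surface for $\widehat{L}_b^0(\mathfrak{r}(J))$, two split copies of $J$. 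By the remark following Theorem~\ref{en_alg_conc}, the fact that $\widetilde{F}\setminus h$ is disconnected does not affect the computation, so $\Phi(\mathfrak{r}(J))=[\Sy(\widetilde{F})]$.

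Next I would identify $\Sy(\widetilde{F})$ explicitly. Fix a basis $e_1,\dots,e_{2g}$ of $H_1(F_J,\Z)$, where $g$ is the genus of $F_J$; since $\widetilde{F}=F_J\,\natural\,\rho(F_J)$, the classes $e_1,\dots,e_{2g},\rho_*e_1,\dots,\rho_*e_{2g}$ form a basis of $H_1(\widetilde{F},\Z)$ and $\rho_*$ swaps the two halves. Because $F_J$ and $\rho(F_J)$ lie in disjoint balls joined only by $B$, every such class can be isotoped off $h$ and off the axis, so $h\equiv 0$ and $\widetilde{\lk}\equiv 0$ on $H_1(\widetilde{F},\Z)$, and the Seifert form of $\widetilde{F}$ is block diagonal. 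Its first block is the Seifert matrix $V_J$ of $F_J$; for the second block, combining the facts that $\rho$ is an ambient orientation-preserving homeomorphism (hence preserves linking numbers) and that $\rho$ reverses the chosen orientation of $\widetilde{F}$ with the standard identity $\theta_{-F}=\theta_F^{\,t}$ gives that the second block is $V_J^{\,t}$. Thus
$$
\Sy(\widetilde{F})=\bigl(\,\theta_{F_J}\oplus\theta_{F_J}^{\,t},\ \mathrm{swap},\ 0,\ 0\,\bigr),
$$
and one checks directly that the swap satisfies $\theta(\rho x,\rho y)=\theta^t(x,y)$ and that $\theta-\theta^t$, being the intersection form of $\widetilde{F}$, is unimodular.

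Finally, since $J$ is algebraically slice, $\theta_{F_J}$ admits a metabolizer, i.e.\ a direct summand $P\subset H_1(F_J,\Z)$ of rank $g$ on which $\theta_{F_J}$ vanishes. I claim that $H:=P\oplus\rho_*(P)$ is an equivariant metabolizer for $\Sy(\widetilde{F})$: it is visibly swap-invariant, has rank $2g=\tfrac12\rank H_1(\widetilde{F},\Z)$, is contained in $\ker h\cap\ker\widetilde{\lk}=H_1(\widetilde{F},\Z)$, and $\theta_{F_J}\oplus\theta_{F_J}^{\,t}$ vanishes on $H$ since both $\theta_{F_J}$ and $\theta_{F_J}^{\,t}$ vanish on $P$. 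Hence $[\Sy(\widetilde{F})]=0$ in $\widetilde{\G}^\Z$, so $\mathfrak{r}(J)\in\ker\Phi$ for every $J\in\mathcal{A}$. Since $\mathfrak{r}$ is injective, $\mathfrak{r}(\mathcal{A})$ is a copy of $\mathcal{A}$ sitting inside $\ker(\Phi)$.

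The step I expect to demand the most care is the construction of $\widetilde{F}$ and, in particular, the orientation bookkeeping: one must check that $\widetilde{F}$ really is realized as a $\rho$-invariant surface meeting the axis exactly in $h$, and that the auxiliary orientation can be chosen so that $\rho$ reverses it, which is precisely what forces the transpose in the second Seifert block and makes $P\oplus\rho_*(P)$ isotropic. Everything after that identification is formal linear algebra with the swap involution.
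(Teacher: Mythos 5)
Your proof is correct and follows essentially the same route as the paper: both compute $\Phi(\mathfrak{r}(J))$ on the invariant surface $F_J\natural\,\rho(F_J)$, identify the equivariant Seifert system as $(\theta_{F_J}\oplus\theta_{F_J}^{\,t},\,\mathrm{swap},\,0,\,0)$, and take $P\oplus\rho_*(P)$ as the equivariant metabolizer, concluding via injectivity of $\mathfrak{r}$. Your extra care about the disconnectedness of the cut surface (via the remark after Theorem \ref{en_alg_conc}) and the orientation bookkeeping forcing the transpose in the second block are points the paper leaves implicit, but the argument is the same.
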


\begin{proof}
Let $K$ be an oriented knot representing a class in $\mathcal{A}$ and let $F$ be a Seifert surface for $K$. Then we can compute $\Phi(\mathfrak{r}(K))$ using as invariant surface $\mathfrak{r}(F)=F\natural r(F)$, where analogously to Definition \ref{double}, the involution of $\mathfrak{r}(K)$ exchange $F$ and $r(F)$.

Identifying $H_1(\mathfrak{r}(F),\Z)\cong H_1(F,\Z)\oplus H_1(F,\Z)$, it is not difficult to see that the equivariant Seifert system of $\mathfrak{r}(F)$ is of type $\Sy(\mathfrak{r}(F))=(\theta,\rho,0,0)$, where
$$\theta=\begin{pmatrix}
\theta_F&0\\
0&\theta_F^t
\end{pmatrix}$$
$$\rho=\begin{pmatrix}
0&\id\\
\id&0
\end{pmatrix}.$$
Therefore, if $H\subset H_1(F,\Z)$ is a metabolizer of $\theta_F$ then $H\oplus H\subset H_1(\mathfrak{r}(F),\Z)$ is an equivariant metabolizer for $\Sy(\mathfrak{r}(F))$.
Since $\mathfrak{r}$ is injective (see Corollary \ref{split}), we have that $\ker(\Phi)$ contains a copy of $\mathcal{A}$.
\end{proof}

\begin{remark}
As a consequence, it follows from \cite{jiang1981simple} that $\ker(\Phi)$ contains a subgroup isomorphic to $\Z^\infty$, and from \cite{livingston1999order} that it contains a subgroup isomorphic to $\Z_2^\infty$.
\end{remark}

\subsection{Equivariant Blanchfield pairing}\label{sect:blanchfield}
In this section we show that $\Phi:\C\longrightarrow \widetilde{\G}^\Z$ lifts the homomorphism $\Psi:\C\longrightarrow\widetilde{\mathcal{AC}}$ defined in \cite{miller_powell}. In particular $\Psi$ factors through a \emph{reduced} version $\widetilde{\G}_r^\Z$ of the equivariant algebraic concordance group.

\begin{defn}\label{reduced}
An \emph{equivariant Seifert form} over a ring $R$ is a couple $(\theta,\rho)$, where

\begin{itemize}
\item $\theta:M\times M\longrightarrow R$ is a bilinear form on a free $R$-module $M$ of even rank,
\item $\rho:M\longrightarrow M$ is a $R$-linear involution,
\item $\theta(\rho(x),\rho(y))=\theta^t(x,y):=\theta(y,x)$ for every $x,y\in M$,
\item $\theta-\theta^t$ is unimodular, i.e. induces an isomorphism between $M$ and $\Hom(M,R)$,
\end{itemize}

We say that an equivariant Seifert form $(\theta,\rho)$ on $M$ is \emph{equivariantly metabolic} if there exists a submodule $H\subset M$ such that
\begin{itemize}
\item $\rank M=2\cdot\rank H$
    \item $\rho(H)=H$, i.e. $H$ is \emph{$\rho$-invariant},
    \item $\theta$ is identically zero on $H\times H$.
\end{itemize}
\end{defn}

Similarly to Definition \ref{orth_sum} and \ref{equiv_alg_conc} we can define a notion of orthogonal sum between equivariant Seifert forms and construct the \emph{reduced equivariant algebraic concordance group} $\widetilde{\G}_r^{\R}$ as the set of equivalence classes of equivariant Seifert forms over $R$ up to equivariant concordance.

In the following we mainly focus on equivariant Seifert forms over $\Z$ and we will often omit to specify the ring $R$, implying $R=\Z$.
Only Section \ref{sect:grp_str} will be mostly devoted to studying equivariant Seifert forms over $\Q$.

Clearly, there exists a forgetful homomorphism
$$r:\widetilde{\G}^{\Z}\longrightarrow \widetilde{\G}_r^\Z,$$
which is surjective, since it admits a natural section
$$s:\widetilde{\G}_r^\Z\longrightarrow\widetilde{\G}^\Z$$
given by mapping an equivariant Seifert form $(\theta,\rho)$ to the equivariant Seifert system $(\theta,\rho,0,0)$.
In particular $\widetilde{\G}^\Z$ splits as
$$\widetilde{\G}^\Z\cong \widetilde{\G}_r^\Z\oplus\ker(r).$$

We will denote by $\Phi_r$ the map given by the composition $$\Phi_r:\C\xrightarrow{\Phi}\widetilde{\G}^\Z\xrightarrow{r}\widetilde{\G}_r^\Z.$$

Levine \cite{Levine1969a,Levine1969b} showed that the algebraic concordance group is isomorphic to $\Z^{\infty}\oplus\Z_2^{\infty}\oplus\Z_4^{\infty}$ and that the knot concordance group surjects onto it.
In Section \ref{sect:grp_str} we provide a partial result, similar to Levine's one, on the structure of $\widetilde{\G}^\Z_r$.

The results in Section \ref{sect:grp_str} are obtained by adapting the arguments used in \cite{Levine1969a, Levine1969b} to the strongly invertible setting. However, these ideas do not generalize easily to study $\widetilde{\G}^\Z$: while the restriction on the equivariant metabolizers given by the homomorphism $h$ and $\widetilde{\lk}$ provide valuable information (as shown in Example \ref{ex:alexander_one}), it is not clear how to adapt these arguments to manage these additional restrictions.

In \cite{miller_powell} Miller and Powell study the action of the strong inversion $\rho$ of a strongly invertible knot $K$ on its Alexander module $\A(K)$ and on the Blanchfield pairing on $\A(K)$.
In particular, they show that the action induced by $\rho$ on $\A(K)$ is an anti-isometry of the Blanchfield pairing (Proposition 2.8). Moreover, they define an \emph{equivariant algebraic concordance group} $\AC$ as the Witt group of \emph{abstract equivariant Blanchfield pairings} (Definition 4.3) and they prove that taking the Blanchfield form of a strongly invertible knot $(K,\rho)$ together with the involution on $\A(K)$ induced by $\rho$ defines a homomorphism (Proposition 4.6)
$$\Psi:\C\longrightarrow\AC.$$

In \cite{friedl2017calculation} the authors prove that the Alexander module and the Blanchfield pairing of $K$ can be expressed in terms of the Seifert form of a Seifert surface $F$. If $A$ is a matrix representing the Seifert form of $F$, with respect to a basis $\mathcal{B}$, and $\genus(F)=g$ then $\A(K)\cong\Z[t^{\pm1}]^{2g}/(tA-A^t)\Z[t^{\pm1}]^{2g}$ and under this identification the Blanchfield pairing is equivalent to
$$\mathcal{BL}:\A(K)\times\A(K)\longrightarrow\Q(t)/\Z[t^{\pm1}]$$
$$(x,y)\longmapsto x(t-1)(A-tA^t)^{-1}\overline{y}$$
where $\overline{\cdot}$ is the $\Z$-linear involution given by $t\longmapsto t^{-1}$.
It is not difficult to see, as pointed out in the examples in \cite{miller_powell}, that if $F$ is $\rho$-invariant and $P$ is the matrix representing the action of $\rho$ on $H_1(F,\Z)$ with respect to the basis $\mathcal{B}$, we have that the action of $\rho$ on $\A(K)$ can be read as
$$\rho_*:\A(K)\longrightarrow\A(K)$$
$$x\longmapsto P\overline{x}.$$

The same construction carried out for abstract equivariant Seifert forms and abstract equivariant Blanchfield pairings proves the following theorem.
\pagebreak
\begin{thm}\label{g_to_ac}
There exists a natural group homomorphism
$$\widetilde{\G}_r^\Z\longrightarrow\AC$$
that makes the following diagram commutative
\begin{center}
    \begin{tikzcd}
    \C\ar[r,"\Phi"]\ar[rrd,swap,"\Psi"]&\widetilde{\G}^\Z\ar[r,"r"]&\widetilde{\G}_r^\Z\ar[d]\\
    &&\AC.
    \end{tikzcd}
\end{center}
\end{thm}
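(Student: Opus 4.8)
The plan is to construct the homomorphism $\widetilde{\G}_r^\Z \longrightarrow \AC$ explicitly by mimicking the classical Seifert-form-to-Blanchfield-pairing construction of \cite{friedl2017calculation}, but carried out entirely at the level of abstract algebraic data, and then to verify that it respects the equivariant structure and the concordance relations. First I would take an equivariant Seifert form $(\theta,\rho)$ over $\Z$ on a free module $M$ of rank $2g$, pick a basis, let $A$ be the matrix of $\theta$ and $P$ the matrix of $\rho$, and define the associated abstract equivariant Blanchfield pairing to be the $\Z[t^{\pm 1}]$-module $M_A := \Z[t^{\pm1}]^{2g}/(tA - A^t)\Z[t^{\pm1}]^{2g}$ equipped with the pairing $(x,y)\mapsto x(t-1)(A - tA^t)^{-1}\overline{y}$ and the antilinear involution $x \mapsto P\overline{x}$. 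The first thing to check is that this is a well-defined abstract equivariant Blanchfield pairing in the sense of \cite[Definition 4.3]{miller_powell}: nondegeneracy and sesquilinearity of the pairing are exactly as in the non-equivariant case (already recorded in \cite{friedl2017calculation}), and the anti-isometry property of $P\overline{\cdot}$ — i.e. that $\rho_*$ reverses the Blanchfield pairing — follows formally from the identity $\theta(\rho x, \rho y) = \theta(y,x)$, i.e. $P^t A P = A^t$, by a direct matrix manipulation (this is exactly the abstract version of \cite[Proposition 2.8]{miller_powell}).

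Next I would check independence of choices and functoriality, so that the assignment descends to a map on Witt classes. Changing the basis of $M$ replaces $A$ by $QAQ^t$ and $P$ by $QPQ^{-1}$ for $Q \in \GL(2g,\Z)$, and one checks this induces an isomorphism of equivariant Blanchfield pairings; this is routine. The substantive point is that an equivariantly metabolic equivariant Seifert form maps to a metabolic abstract equivariant Blanchfield pairing. Here, if $H \subset M$ is a $\rho$-invariant metabolizer with $\theta|_{H\times H} = 0$ and $\rank M = 2\,\rank H$, I would show that the image of $H \otimes \Z[t^{\pm1}]$ in $M_A$ is a metabolizer for the Blanchfield pairing in the sense of \cite{miller_powell}, and that it is preserved by $P\overline{\cdot}$ because $H$ is $\rho$-invariant — so it is an \emph{equivariant} metabolizer. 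The self-annihilation and the "half-dimensional" conditions for the Blanchfield metabolizer follow from the corresponding statements in the classical (non-equivariant) setting, since those only use $\theta|_{H\times H}=0$ and the rank condition. Additivity under orthogonal sum is immediate because $M_{A_1 \oplus A_2} \cong M_{A_1} \oplus M_{A_2}$ compatibly with all the structure. This gives a well-defined group homomorphism $\widetilde{\G}_r^\Z \longrightarrow \AC$.

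Finally, commutativity of the triangle. The composite $\C \xrightarrow{\Phi} \widetilde{\G}^\Z \xrightarrow{r} \widetilde{\G}_r^\Z \longrightarrow \AC$ sends $[K,\rho,h]$ to the class of the equivariant Seifert form $(\theta_F, \rho_*)$ of an invariant Seifert surface $F$ of type $0$, and then to the abstract equivariant Blanchfield pairing built from $A = $ (matrix of $\theta_F$) and $P = $ (matrix of $\rho_*$ on $H_1(F)$). By \cite{friedl2017calculation} this module-with-pairing is isomorphic to the genuine Alexander module $\A(K)$ with its genuine Blanchfield pairing, and by the computation recorded just before the theorem statement the involution $x \mapsto P\overline{x}$ is exactly the geometric $\rho_*$ on $\A(K)$. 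Hence the composite agrees with $\Psi$ as defined in \cite[Proposition 4.6]{miller_powell}. One small wrinkle: $\Phi$ is a priori defined using a type-$0$ surface, whereas a general $\rho$-invariant Seifert surface with $\Fix(\rho)\cap F = h$ need not be of type $0$; but the Remark following Theorem \ref{en_alg_conc} shows $\Phi(K) = [\Sy(F) + n\Sy(G)]$ for a type-$n$ surface, and $\Sy(G)$ has trivial underlying Seifert \emph{form} up to a metabolic summand (indeed $r(\Sy(G))$ is metabolic), so the reduced class $\Phi_r(K)$ is computed by any $\rho$-invariant Seifert surface, matching the one used for $\A(K)$. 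I expect the main obstacle to be purely bookkeeping: carefully matching the conventions for "metabolizer" between Levine-style Seifert forms and the $\Z[t^{\pm1}]$-module language of \cite{miller_powell}, and checking the anti-isometry and metabolizer conditions survive the passage $A \rightsquigarrow M_A$ in the equivariant setting — none of it is deep, but it must be done with care to get signs and the $t \leftrightarrow t^{-1}$ conjugation right.
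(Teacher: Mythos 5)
Your proposal takes the same route as the paper: the paper's entire proof of this theorem is the remark that ``the same construction carried out for abstract equivariant Seifert forms and abstract equivariant Blanchfield pairings proves the theorem,'' and you are filling in exactly that construction --- the assignment $A\mapsto\bigl(\Z[t^{\pm1}]^{2g}/(tA-A^t),\,x(t-1)(A-tA^t)^{-1}\overline{y},\,P\overline{\,\cdot\,}\bigr)$, the anti-isometry from $P^tAP=A^t$, metabolic goes to metabolic, additivity, and the identification with the geometric data via \cite{friedl2017calculation}. That part is fine.

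One parenthetical claim near the end is wrong: $r(\Sy(G))$ is \emph{not} metabolic in $\widetilde{\G}_r^\Z$. With $A_G=\left(\begin{smallmatrix}0&1\\0&0\end{smallmatrix}\right)$ and $P_G=\left(\begin{smallmatrix}0&1\\1&0\end{smallmatrix}\right)$, the only $P_G$-invariant rank-one summands are spanned by $e_1\pm e_2$, on which the form takes the values $\pm1$; equivalently, the associated symmetric structure is $(\beta,S)=((1),(1))$, a nonzero class in $W(\Q)$ inside $\G_{sym}$ (indeed $\widetilde{\sigma}(\Sy(G))=-2$, which is exactly why $\widetilde{\G}_r^\Z$ detects the choice of half-axis while $\AC$ does not). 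Fortunately your ``wrinkle'' does not actually need this claim: commutativity only has to be checked on the type-$0$ surface used to define $\Phi$, which is a genuine Seifert surface for $K$ and therefore computes $\A(K)$, its Blanchfield pairing, and the induced involution correctly. And even if one insists on a type-$n$ surface, the discrepancy $n\cdot r(\Sy(G))$ dies in $\AC$ for a different reason than the one you give: $\det(tA_G-A_G^t)$ is a unit in $\Z[t^{\pm1}]$, so the associated Blanchfield module is trivial. Replace the false sentence with either of these observations and the argument is complete.
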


It follows from its definition that $\AC$ does not distinguish a directed strongly invertible knot from its antipode.
On the other hand, in Section \ref{sect:equiv_GL} we prove that the equivariant signature \cite{alfieri2021strongly} can be retrieved from $\widetilde{\G}_r^\Z$. Since the equivariant signature depends on the choice of half-axis for a strongly invertible knot, so is $\widetilde{\G}_r^\Z$ (see Remark \ref{equiv_signature_direction}).

We conclude with the following example, which shows that the equivariant algebraic concordance class of a knot is able to obstruct smooth as well as topological equivariant sliceness for knots with trivial Alexander polynomial, contrary to $\AC$.

\begin{ex}\label{ex:alexander_one}
Consider the knot $K13n1496$ as the directed strongly invertible knot $(K,\rho,h)$ that bounds the surface $F$ in Figure \ref{alexander_one}, where the strong inversion is given by the $\pi$-rotation around the vertical axis and the chosen oriented half-axis is the red one in the figure.

\begin{figure}[ht]
\centering
\begin{tikzpicture}

\node[anchor=south west,inner sep=0] at (0,0){\includegraphics[scale=0.5]{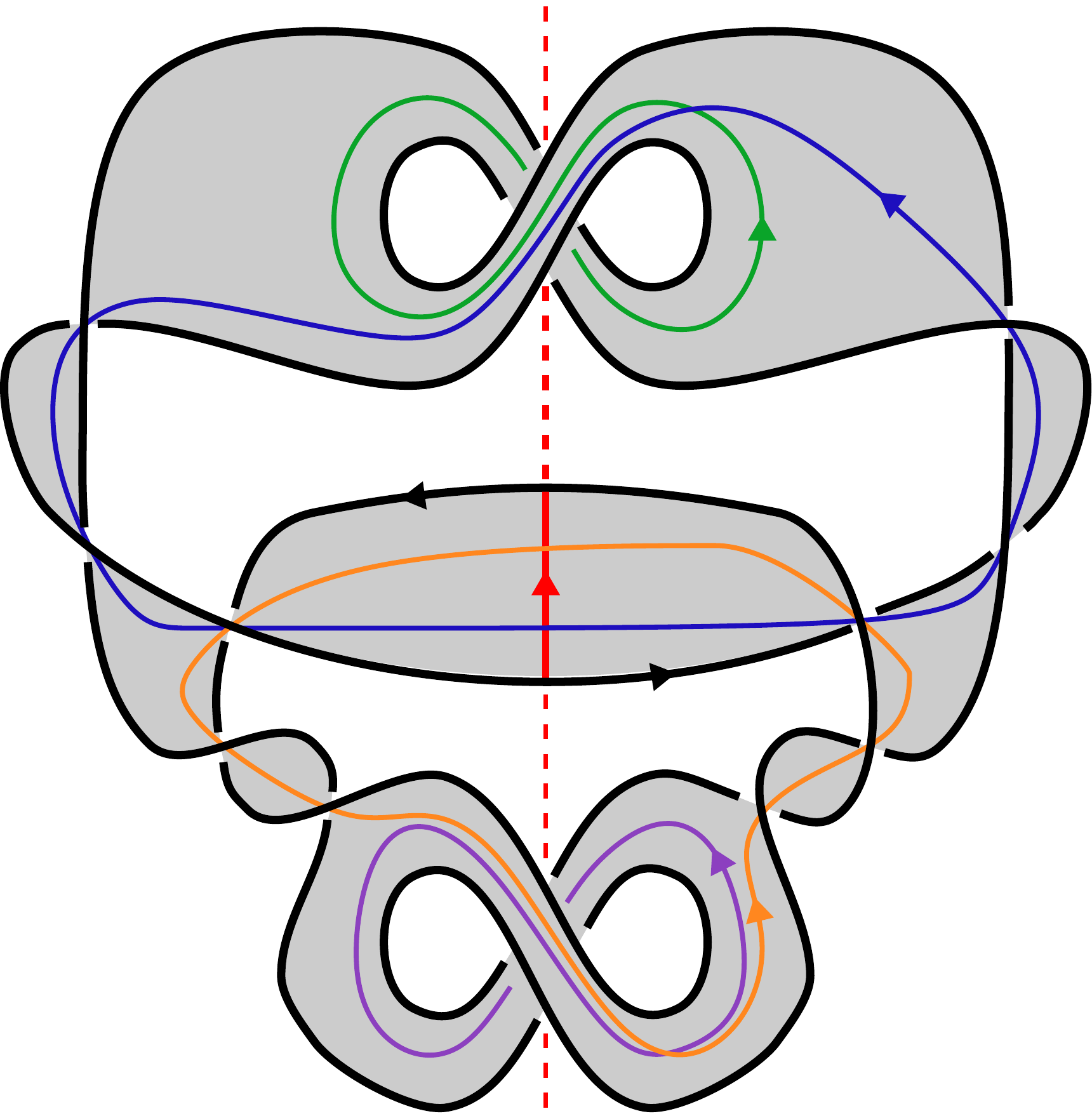}};
\node[label={$\alpha$}] at (6.1,6){};
\node[label={$\beta$}] at (7.2,7.3){};
\node[label={$\gamma$}] at (7.5,3){};
\node[label={$\delta$}] at (2.6,1){};
\node[label={$h$}] at (4.7,3.75){};
\end{tikzpicture}
    \caption{The invariant surface $F$ with boundary $K13n1496$. The chosen half-axis $h$ is the solid one. The curves $\alpha,\beta,\gamma,\delta$ form a basis of $H_1(F,\Z)$.}
    \label{alexander_one}
\end{figure}

One can easily check that $K$ has trivial Alexander polynomial and hence that its image is trivial in the equivariant algebraic concordance group $\AC$ defined in \cite{miller_powell}. However, Boyle and Issa \cite{boyle2021equivariant} prove that $K$ is not equivariantly slice. We show that the same result can be obtained by using $\widetilde{\G}^\Z$.

The surface $F$ is an invariant Seifert surface for $K$, so we can use it to compute the class of $K$ in $\widetilde{\G}^{\Z}$.
With respect to the basis $\{\alpha,\beta,\gamma,\delta\}$ of $H_1(F,\Z)$ the Seifert form and the involution $\rho_*$ are represented by the matrices $A$ and $P$ respectively:
$$A=\begin{pmatrix}
1 & 1 & 0 & 0\\
0 & -1 & -1 & 0\\
0 & -1 & -1 & -1\\
0 & 0 & 0 & -1
\end{pmatrix}$$
$$P=\begin{pmatrix}
1 & 1 & 0 & 0\\
0 & -1 & 0 & 0\\
0 & 0 & -1 & 0\\
0 & 0 & 1 & 1
\end{pmatrix}.$$

The homomorphisms $h$ and $\widetilde{\lk}$ are represented by the covectors:
\begin{gather*}
h=\begin{pmatrix}
0 & -1 & 1 & 0
\end{pmatrix}\\
\widetilde{\lk}=\begin{pmatrix}
-2 & -1 & 1 & 2
\end{pmatrix}.\end{gather*}
One can easily check that $H=\langle \alpha+\delta, 2(\beta-\gamma)-\alpha+\delta\rangle$ is a $\rho_*$-invariant submodule of rank $2$ on which the Seifert form of $F$ vanishes.

Therefore the class of $K$ represents the identity also in the reduced equivariant algebraic concordance group $\widetilde{\G}_r^\Z$.

However, $H=\ker(h)\cap\ker(\widetilde{\lk})=\langle \beta+\gamma, \alpha+\delta\rangle$ has rank $2$ but the Seifert form do not vanishes on $H$, therefore the class of $K$ is nontrivial in $\widetilde{\G}^{\Z}$.
\end{ex}

\subsection{Lower bound on the equivariant slice genus}\label{sect:lower_bounds}
In \cite{miller_powell} the authors obtain a lower bound on the equivariant slice genus of a strongly invertible knot using the Blanchfield form.
Since Miller and Powell's invariant factors through $\widetilde{\G}^\Z$, we can get the same lower bound indirectly.
However, we prove in this section that it is possible to obtain a different lower bound on the equivariant slice genus using the additional information contained in $\widetilde{\G}^\Z$.

\begin{defn}\label{equiv_complx}
Let $\Sigma\subset B^4$ be a properly embedded orientable surface, with boundary a strongly invertible knot $(K,\rho)$. Suppose that $\Sigma$ is invariant under an involution of $B^4$ which extends $\rho$, and denote by $D\cong D^2$ the fixed point set of $\rho$ in $B^4$.
Then intersection $\Sigma\cap D$ consists on an arc joining the two fixed points on $K$ and finite set $\Gamma$ of fixed $S^1$.
We define the \emph{complexity} of $\Sigma$ as
$$c(\Sigma)=\genus(\Sigma)+|\Gamma|.$$
Then, we define the \emph{slice complexity} of a strongly invertible knot $(K,\rho)$ as
$$sc(K,\rho)=\min_{\Sigma} c(\Sigma),$$
where $\Sigma$ ranges among the orientable surfaces in $B^4$ with boundary $K$, invariant under an involution of $B^4$ extending $\rho$.
\end{defn}

\begin{remark}
By Smith theory (see \cite{Bredon1972IntroductionTC}), we have that $|\Gamma|\leq\genus(\Sigma)$. Therefore, for every strongly invertible knot $(K,\rho)$ the follwing inequalities hold
$$\widetilde{g}_4(K)\leq sc(K)\leq 2\cdot\widetilde{g}_4(K).$$
\end{remark}

Let $(K,\rho,h)$ be a directed strongly invertible knot, and let $\Sigma\subset B^4$ an invariant surface for $K$ as in Definition \ref{equiv_complx}.
Denote by $D$ the fixed point set in $B^4$, oriented compatible with the half-axis $h$.
Observe that $D\setminus \Sigma$ can be subdivided into two subsurfaces in a checkerboard fashion, as described in Figure \ref{orientation_fixed_circles}.

\begin{figure}[ht]
\centering
\begin{tikzpicture}

\node[anchor=south west,inner sep=0] at (0,0){\includegraphics[scale=0.5]{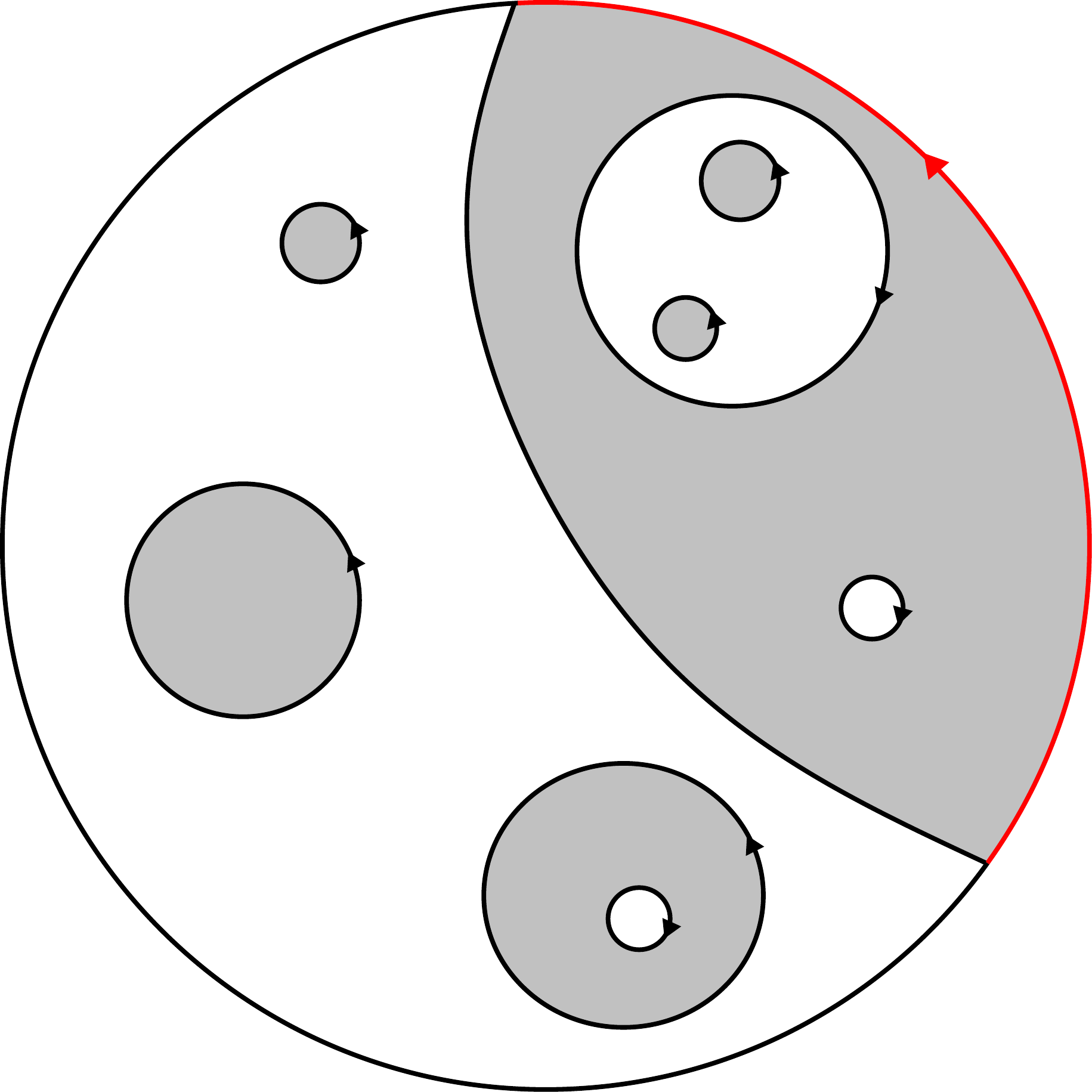}};
\node[label={$h$}] at (8.5,7.5){};
\node[label={$\alpha$}] at (4.8,3.8){};
\end{tikzpicture}
    \caption{An example of how to get the orientation of the fixed point set $D$.}
    \label{orientation_fixed_circles}
\end{figure}

Let $S$ be the subsurface containing the chosen half-axis $h$, and orient every $\gamma\in\Gamma$ and the fixed arc $\alpha$ as the boundary of $S$.

Let $D\times D^2$ be an equivariant tubular neighbourhood of $D$ in $B^4$.
Pick an auxiliary orientation on $\Sigma$ and observe that $\Sigma$ induces on every $\gamma$ a nowhere vanishing section $s_\gamma$ of $D\times D^2$, which we can regard as a map $s_\gamma:\gamma\cong S^1\longrightarrow S^1$.
We call the degree of $s_\gamma$ the \emph{framing} $f(\gamma)\in\Z$ of $\gamma$.
It is easy to see that it does not depend on the auxiliary orientation on $\Sigma$.

Similarly, $\Sigma$ induces on $\alpha$ a nowhere zero section of $D\times D^2$, which we complete to a section on $\alpha\cup h$ using the section induced on $h$ by a band $B\subset S^3$ which gives the $0$-butterfly link of $K$ (see Definition \ref{butterfly_link}).
Call the degree of the associated map $S^1\longrightarrow S^1$ the \emph{framing} $f(\alpha)$ of $\alpha$.

Finally, we say that $\Sigma$ is an \emph{invariant surface type $n$} for $(K,\rho,h)$, where
$$n=f(\alpha)+\sum_{\gamma\in\Gamma}f(\gamma).$$

\begin{prop}\label{remove_fix_pt}
Let $\Sigma\subset B^4$ be an invariant surface of type $n$ for a directed strongly invertible knot $(K,\rho,h)$. Then, there exists an invariant oriented surface $\widehat{\Sigma}\subset B^4$ with boundary $\widehat{L}_b^n(K)$, with no fixed points and such that
$$\genus(\widehat{\Sigma})\leq\begin{cases}c(\Sigma)-1\text{ if } \widehat{\Sigma} \text{ is connected,}\\
c(\Sigma)\text{ if } \widehat{\Sigma} \text{ is not connected}.
\end{cases}$$
\end{prop}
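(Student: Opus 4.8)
The plan is to excise an equivariant neighbourhood of the fixed disk $D=\Fix(\rho)$ from $\Sigma$, repair the resulting boundary equivariantly, and then control $\genus(\widehat\Sigma)$ by an Euler characteristic count. First I would fix an equivariant closed tubular neighbourhood $N(D)\cong D\times D^2$ of $D$ in $B^4$ on which $\rho$ acts by $(x,z)\mapsto(x,-z)$. Since $\rho$ reverses the orientation of $\Sigma$ it acts as a reflection near $\Sigma\cap D=\alpha\cup\Gamma$, so $\Sigma\cap N(D)$ consists of a band $N(\alpha)\cong[0,1]\times[-1,1]$ around the fixed arc (with $\alpha=[0,1]\times 0$, its two endpoints lying on $K$) together with $|\Gamma|$ disjoint annuli $N(\gamma)\cong\gamma\times[-1,1]$ around the fixed circles, in each case with $\rho$ acting by $t\mapsto-t$. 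Set $\Sigma_0:=\Sigma\setminus\int N(D)$: this is an orientable surface on which $\rho$ acts freely, since it misses $D$. Removing $N(\alpha)$ is precisely the band move along a band parallel to $h$, of framing $f(\alpha)$, which opens $K$ into a $2$-component link $L_1\sqcup L_2$ isotopic to a butterfly link; removing the annuli $N(\gamma)$ produces, in $\int B^4$, a $\rho$-invariant family of $2|\Gamma|$ circles $C^\gamma_\pm$ with $\rho(C^\gamma_+)=C^\gamma_-$, whose classes in $H_1(B^4\setminus D)\cong\Z$ are governed by the framings $f(\gamma)$. Thus $\partial\Sigma_0=L_1\sqcup L_2\sqcup\bigsqcup_\gamma(C^\gamma_+\sqcup C^\gamma_-)$.

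Next I would cap off the circles $C^\gamma_\pm$ by a $\rho$-invariant surface contained in $B^4\setminus D$, glued to $\Sigma_0$ along these circles, obtaining $\widehat\Sigma$. The existence of such an equivariant capping — and the identification of the resulting boundary link with $\widehat{L}_b^n(K)$, for $n=f(\alpha)+\sum_\gamma f(\gamma)$ — is exactly where the hypothesis that $\Sigma$ is of type $n$ is used: it forces the relevant homology classes to vanish, not only in $H_1(B^4\setminus D)$ but in the quotient $H_1\big((B^4\setminus D)/\rho\big)$, which carries the genuine equivariant obstruction (an individual circle $C^\gamma_+$ with $f(\gamma)\neq0$ does \emph{not} bound equivariantly in the complement of $D$), so that the whole family, together with the band around $\alpha$ and the link components, can be capped compatibly and the attachment reroutes $L_1\sqcup L_2$ into $\widehat{L}_b^n(K)$; moreover the capping surface can be chosen with total Euler characteristic at least $-2|\Gamma|$ (for instance by capping the $C^\gamma_\pm$ in pairs). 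I expect this middle step to be the main obstacle: making the capping equivariant is not automatic, and the bookkeeping matching the framings $f(\alpha),f(\gamma)$ with the linking data of the butterfly link is the delicate point. Here one mimics the constructions in \cite[Section 4]{boyle2021equivariant} and \cite{diprisa_framba}.

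Finally the genus bound is an Euler characteristic count. Removing the band $N(\alpha)$ (a disk glued along two arcs) raises the Euler characteristic by one, while the annuli $N(\gamma)$ and the circles $C^\gamma_\pm$ have vanishing Euler characteristic, so $\chi(\Sigma_0)=\chi(\Sigma)+1$; adding a capping surface of Euler characteristic at least $-2|\Gamma|$ (and gluing along the $2|\Gamma|$ circles, which contributes nothing) gives $\chi(\widehat\Sigma)\geq\chi(\Sigma)+1-2|\Gamma|=2-2\genus(\Sigma)-2|\Gamma|=2-2c(\Sigma)$, using $\chi(\Sigma)=1-2\genus(\Sigma)$ and discarding any closed components. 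Now $\widehat\Sigma$ has exactly the two boundary components of $\widehat{L}_b^n(K)$: if $\widehat\Sigma$ is connected then $\chi(\widehat\Sigma)=-2\genus(\widehat\Sigma)$, whence $\genus(\widehat\Sigma)\leq c(\Sigma)-1$; and if $\widehat\Sigma$ is disconnected, hence has two components one bounding each link component, then $\chi(\widehat\Sigma)=2-2\genus(\widehat\Sigma)$, whence $\genus(\widehat\Sigma)\leq c(\Sigma)$, which is the claim.
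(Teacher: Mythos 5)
Your overall strategy---isolate the fixed set $\Sigma\cap D$, repair the surface equivariantly in the complement of $D$, and convert the result into a genus bound by an Euler characteristic count---is parallel in spirit to the paper's proof, and your final arithmetic ($\chi(\widehat\Sigma)\geq 2-2c(\Sigma)$ giving the two cases) is correct. However, the step you yourself flag as ``the main obstacle'' is not a technicality to be deferred: it is the entire content of the proposition, and the specific shortcut you propose for it does not work. You claim the capping surface ``can be chosen with total Euler characteristic at least $-2|\Gamma|$ (for instance by capping the $C^\gamma_\pm$ in pairs).'' Capping each pair by an annulus would give total Euler characteristic $0$, which would prove a bound in terms of $\genus(\Sigma)$ alone rather than $c(\Sigma)=\genus(\Sigma)+|\Gamma|$; the $|\Gamma|$ term in the statement is exactly the price of the circles with $f(\gamma)\neq 0$, and no pairwise capping can avoid paying it. Concretely, when $f(\gamma)\neq 0$ the pair $C^\gamma_+\sqcup C^\gamma_-$ need not bound \emph{any} surface in $B^4\setminus D$ (each component is a nonzero multiple of the meridian in $H_1(B^4\setminus D)\cong\Z$, and with the boundary orientations inherited from $\Sigma_0$ their classes need not cancel), let alone a $\rho$-invariant one. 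The paper resolves this with two explicit equivariant surgeries that you would need to reproduce: (i) a zero-framed, innermost fixed circle $\gamma$ is compressed along the subdisk $D_\gamma\subset D$ it bounds, using the extension of the nowhere-vanishing normal section guaranteed by $f(\gamma)=0$; (ii) a nonzero-framed innermost circle is tubed to an adjacent fixed circle along an arc $\beta\subset D\setminus\Sigma$, which merges the two circles into one of framing $f(\gamma)+f(\xi)$ at the cost of one genus. Move (ii) is needed at most $|\Gamma|$ times, and the framings are eventually absorbed into the arc component $\alpha\cup h$, whose total framing is normalized to zero by the band move producing $\widehat{L}_b^n(K)$; this is where the hypothesis that $\Sigma$ has type $n$ is actually consumed.

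A secondary gap: after you excise the band $N(\alpha)$, the two new boundary arcs of $\Sigma_0$ run into the interior of $B^4$ (since $\alpha$ is properly embedded in $B^4$, not in $S^3$), so $\partial\Sigma_0$ is not a link in $S^3$ and cannot literally be $\widehat{L}_b^n(K)$. The paper avoids this by never detaching the boundary: it keeps $\partial=K\subset S^3$ while removing the fixed circles, and only at the end glues on the trace of the equivariant band move (a genus-zero cobordism in a collar of $S^3$) to change the boundary from $K$ to $\widehat{L}_b^n(K)$, after which one last compression removes the single remaining zero-framed fixed circle and produces the connected/disconnected dichotomy in the statement. Your write-up would need both of these repairs before it constitutes a proof.
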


\begin{proof}
On the set of fixed circles $X=\Gamma\cup\{\alpha\cup h\}$ consider the partial order given by the nesting of circles, seen as circles in the fixed disk $D$.
First of all, we want to remove all of the fixed circles. We do so by applying two moves.

\textbf{Move 1}: Suppose there exists a minimal element $\gamma\in\Gamma\subset X$ with framing zero. Let $D_{\gamma}\subset D$ be the disk bounded by $\gamma$. Since $f(\gamma)=0$ the section induced by $\Sigma$ on $\gamma$ of the equivariant tubular neighbourhood of $D$ extends over $D_{\gamma}$ to a nowhere vanishing section, which we can take to be equivariant. Therefore, we can perform an equivariant surgery of $\Sigma$ along $D_{\gamma}$, obtaining a surface $\Sigma'$ of the same type, with less genus and fixed circles.
Replace $\Sigma$ by $\Sigma'$.

\textbf{Move 2}: Let $\gamma\in\Gamma$ be a minimal element with $f(\gamma)\neq 0$ and let $\xi\in X$ be a circle such that there exists an arc $\beta\subset D\setminus\Sigma$ joining $\gamma$ and $\xi$.
Then, we can find an equivariant $D^1\times D^2$ inside an equivariant tubular neighbourhood $D^1\times D^3$ of $\beta$ such that $(\partial D^1)\times D^2\subset \Sigma$. We perform an equivariant surgery along $D^1\times D^2$, obtaining a surface $\Sigma'$ with $\genus(\Sigma')=\genus(\Sigma)+1$. One can check that the circles $\gamma$ and $\xi$ were joined during the surgery into a new fixed circle with framing $f(\gamma)+f(\xi)$. Therefore $\Sigma'$ has the same type of $\Sigma$. Replace $\Sigma$ by $\Sigma'$.

\begin{figure}[ht]
\centering
\begin{tikzpicture}

\node[anchor=south west,inner sep=0] at (0,0){\includegraphics[scale=0.5]{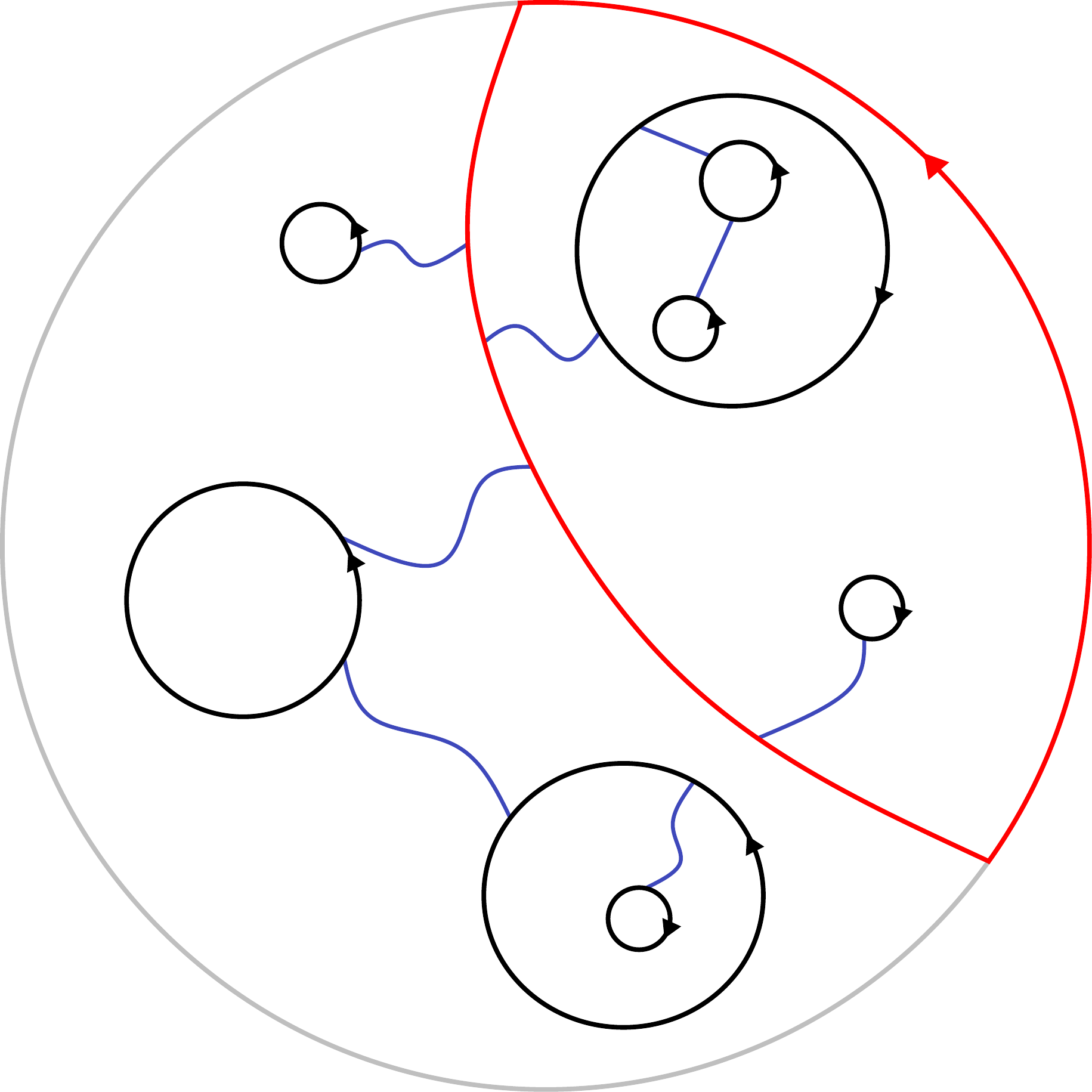}};
\node[label={$h$}] at (8.5,7.5){};
\node[label={$\alpha$}] at (4.8,3.8){};
\end{tikzpicture}
    \caption{An example of choice of the arcs $\beta$, in blue.}
    \label{1surgery}
\end{figure}

Applying Move 1 whenever possible and Move 2 in the other cases, we get an invariant surface $\Sigma'$ of type $n$ with fixed-point set consisting of only one arc.
Observe that we have to apply Move 2 at most $\#\Gamma$ times.
As in Remark \ref{slice_link} we can consider the equivariant band move on $K$ along $h$ that gives $\widehat{L}_b^n(K)$ as an equivariant cobordism $C$ between $K$ and $\widehat{L}_b^n(K)$.
Now glue together $C$ and $\Sigma'$ along $K$, obtaining a surface $\Sigma''$, with $\genus(\Sigma'')\leq c(\Sigma)$.
By construction, the fixed point set of the involution on $\Sigma''$ consists of a single circle, with framing induced by $\Sigma''$ equal to zero. Finally, apply Move 1, obtaining an invariant surface $\widehat{\Sigma}$ with boundary $\widehat{L}_b^n(K)$ and without fixed points.
Observe that if the final Move 1 does not disconnect the surface, then $\genus(\widehat{\Sigma})=\genus(\Sigma'')-1\leq c(\Sigma)-1$. Otherwise $\genus(\widehat{\Sigma})=\genus(\Sigma'')\leq c(\Sigma)$.
\end{proof}

\begin{defn}\label{def:alg_complx}
Let $\Sy=(\theta,\rho,h,\widetilde{\lk})$ be an equivariant Seifert system on $M\cong\Z^{2m}$.
A \emph{partial metabolizer} for $\Sy$ is a $\rho$-invariant submodule $H\subset \ker(h)\cap\ker(\widetilde{\lk})$ such that $\theta_{H\times H}\equiv 0$.
We define the \emph{algebraic complexity} of $\Sy$ as $ac(\Sy)=m-k$, where
$$k=\max\{\rank(H)\,|\, H \text{ is a partial metabolizer of } \Sy\}.$$
\end{defn}

\begin{prop}
The algebraic complexity is constant on the equivariant algebraic concordance classes. Therefore it induces a well-defined map
$$ac:\widetilde{\G}^{\Z}\longrightarrow\mathbb{N}.$$
\end{prop}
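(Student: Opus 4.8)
The plan is to unwind the definition of equivariant algebraic concordance directly. Let $\Sy_1$ on $M_1\cong\Z^{2m_1}$ and $\Sy_2$ on $M_2\cong\Z^{2m_2}$ be equivariantly concordant, i.e.\ $\Sy_{tot}:=\Sy_1\oplus(-\theta_2^t,\rho_2,h_2,\widetilde{\lk}_2)$ is equivariantly metabolic; write $\mu(\Sy)$ for the maximal rank of a partial metabolizer, so $ac(\Sy_i)=m_i-\mu(\Sy_i)$ (and $0\leq ac(\Sy)\leq m$, since a partial metabolizer is $\theta$-, hence $(\theta-\theta^t)$-, isotropic and $\theta-\theta^t$ is unimodular of rank $2m$, so $ac$ really takes values in $\mathbb{N}$). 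It suffices to prove $\mu(\Sy_2)\geq\mu(\Sy_1)-m_1+m_2$: a half-rank partial metabolizer $L$ of $\Sy_{tot}$ remains one after negating and transposing the form and reordering the summands, so $\Sy_2\oplus(-\theta_1^t,\rho_1,h_1,\widetilde{\lk}_1)$ is equivariantly metabolic as well, and the symmetric inequality $\mu(\Sy_1)\geq\mu(\Sy_2)-m_2+m_1$ then follows by running the same argument with the roles exchanged; together these give $ac(\Sy_1)=ac(\Sy_2)$. Since saturating a partial metabolizer changes neither its rank nor any of its defining properties, I would carry out the argument over $\Q$, writing $V_i=M_i\otimes\Q$ and $\omega_i=\theta_i-\theta_i^t$, a symplectic form on $V_i$.

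Fix a maximal partial metabolizer $H_1\subseteq V_1$ of $\Sy_1$, so $\dim H_1=\mu(\Sy_1)$ and $H_1$ is $\omega_1$-isotropic, whence $H_1\subseteq H_1^{\perp_{\omega_1}}$; and fix a half-dimensional partial metabolizer $L\subseteq V_1\oplus V_2$ of $\Sy_{tot}$, which is automatically Lagrangian for $\omega_1\oplus\omega_2$. Put $E_1=\{x\in V_1\colon(x,0)\in L\}$, $F_2=\{y\in V_2\colon(0,y)\in L\}$, and
$$H_2\;:=\;p_2\bigl(L\cap(H_1\oplus V_2)\bigr)\subseteq V_2,$$
where $p_2\colon V_1\oplus V_2\to V_2$ is the projection. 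The first thing to check is that $H_2$ is a partial metabolizer of $\Sy_2$: it is $\rho_2$-invariant because $L$ and $H_1\oplus V_2$ are invariant and $p_2$ is equivariant; for any lift $(x_1,x_2)\in L$ with $x_1\in H_1$ one has $h_2(x_2)=-h_1(x_1)=0$ and $\widetilde{\lk}_2(x_2)=-\widetilde{\lk}_1(x_1)=0$, so $H_2\subseteq\ker h_2\cap\ker\widetilde{\lk}_2$; and for two such lifts $(x_1,x_2),(y_1,y_2)\in L$ the identity $\theta_1(x_1,y_1)-\theta_2(y_2,x_2)=(\theta_1\oplus(-\theta_2^t))\bigl((x_1,x_2),(y_1,y_2)\bigr)=0$ together with $\theta_1(x_1,y_1)=0$ forces $\theta_2(y_2,x_2)=0$, so $\theta_2$ vanishes on $H_2\times H_2$.

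The substantive part is a rank count for $H_2$. Using $\ker(p_2|_{L\cap(H_1\oplus V_2)})=L\cap(H_1\oplus 0)=(E_1\cap H_1)\oplus 0$, $\ker(p_1|_L)=0\oplus F_2$, and $L\cap(H_1\oplus V_2)=(p_1|_L)^{-1}\bigl(H_1\cap p_1(L)\bigr)$, one obtains $\dim H_2=\dim\bigl(H_1\cap p_1(L)\bigr)+\dim F_2-\dim(E_1\cap H_1)$. Now two standard facts about a Lagrangian $L$ of an orthogonal sum of symplectic spaces — obtained by comparing dimensions of $\omega$-orthogonal complements of $L\cap V_1$ — give $\dim E_1-\dim F_2=m_1-m_2$ and $p_1(L)=E_1^{\perp_{\omega_1}}$; combining these with the duality identity $\dim\bigl(H_1\cap E_1^{\perp_{\omega_1}}\bigr)=\dim H_1-\dim E_1+\dim\bigl(E_1\cap H_1^{\perp_{\omega_1}}\bigr)$ for the nondegenerate pairing $\omega_1\colon E_1\times V_1\to\Q$, substitution yields
$$\dim H_2\;=\;\dim H_1-m_1+m_2+\Bigl(\dim\bigl(E_1\cap H_1^{\perp_{\omega_1}}\bigr)-\dim(E_1\cap H_1)\Bigr).$$
Because $H_1\subseteq H_1^{\perp_{\omega_1}}$, the last bracket is non-negative, so $\mu(\Sy_2)\geq\dim H_2\geq\mu(\Sy_1)-m_1+m_2$. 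Replacing $H_2$ by $H_2\cap M_2$, a $\rho_2$-invariant $\Z$-submodule of the same rank lying in $\ker h_2\cap\ker\widetilde{\lk}_2$ on which $\theta_2$ vanishes, shows the bound concerns $\mu(\Sy_2)$ computed over $\Z$, as needed; the induced map $ac\colon\widetilde{\G}^\Z\to\mathbb{N}$ then follows.

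The genuinely routine pieces are the two Lagrangian facts and the $\omega_1$-duality identity, which are elementary linear symplectic algebra; the only place demanding care is organising the rank computation so that the cancellations producing the displayed formula are visible, and checking the passage between $\Z$ and $\Q$. I do not expect to need any reduction/surgery machinery for Seifert systems; the main obstacle is simply to keep the several auxiliary subspaces $E_1,F_2,H_1,H_2$ and their orthogonals straight.
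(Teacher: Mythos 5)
Your argument is correct and complete, and it reaches the statement by a route that is organized differently from the paper's, though the two share the same underlying mechanism. The paper first reduces the claim to showing that $ac(\Sy_0\oplus\Sy_1)=ac(\Sy_1)$ whenever $\Sy_0$ is equivariantly metabolic, and proves the nontrivial inequality by a basis-elimination argument: it writes a basis $\{(x_i,y_i,z_i)\}$ of a maximal partial metabolizer of the sum adapted to a splitting $V_0=H_0\oplus W$, eliminates in two rounds, extracts the span $Z$ of the surviving $V_1$-components, and closes with the count $\dim X^{\perp}\geq\dim H_0+\dim Y$ for the skew form $A_0-A_0^t$. You instead work directly with the concordance $\Sy_1\sim\Sy_2$, transport a maximal partial metabolizer $H_1$ of $\Sy_1$ through the Lagrangian $L$ via $H_2=p_2\bigl(L\cap(H_1\oplus V_2)\bigr)$, and run the dimension count coordinate-freely using $p_1(L)=E_1^{\perp_{\omega_1}}$, $\dim E_1-\dim F_2=m_1-m_2$, and the duality identity; the construction is parallel to the paper's (intersect the big metabolizer with a distinguished isotropic subspace plus the other factor, then project), but your organization isolates the inequality as the single positivity statement $\dim\bigl(E_1\cap H_1^{\perp_{\omega_1}}\bigr)\geq\dim(E_1\cap H_1)$, at the price of the two (genuinely routine, and correct) Lagrangian facts. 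Your verification that $H_2$ inherits the $\ker h\cap\ker\widetilde{\lk}$ condition and the vanishing of $\theta_2$, the passage to $\Q$ and back to $\Z$ by intersecting with $M_2$ (which the paper handles by reference to Lemma \ref{lemma:inclusion}), and the symmetry step via $-\Sy_{tot}^t$ are all sound; your observation that a partial metabolizer is isotropic for the unimodular form $\theta-\theta^t$, so that $ac\geq 0$ and the map genuinely lands in $\mathbb{N}$, is a point the paper leaves implicit.
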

\begin{proof}
Let $\mathcal{S}_i=(A_i,P_i,h_i,\widetilde{\lk}_i)$, $i=0,1$ be two equivariant Seifert systems. To prove that the algebraic complexity does not depend on the representative of a class in $\widetilde{\G}^\Z$ it is sufficient to show that if $\Sy_0$ is equivariantly metabolic then $ac(\Sy_0\oplus\Sy_1)=ac(\Sy_1)$.
Let $H_0$ is a metabolizer for $\Sy_0$ and $H_1$ is a maximal rank partial metabolizer for $\Sy_1$ then $H_0\oplus H_1$ is a partial metabolizer for $\Sy_0\oplus\Sy_1$, showing that
$$
ac(\Sy_0\oplus\Sy_1)\leq ac(\Sy_1).
$$
For simplicity, consider the equivariant Seifert systems to be defined over $\Q$ coefficients. It can be seen, as in the proof of Lemma \ref{lemma:inclusion}, that passing from $\Z$ to $\Q$ coefficients does not change the maximal rank of a partial metabolizer.
Denote by $V_i$ the $m_i$-dimensional vector space underlying $\Sy_i$.
Let $H\subset V_0\oplus V_1$ be a maximal partial metabolizer for $\Sy_0\oplus\Sy_1$, $H_0$ be a metabolizer for $\Sy_0$ and $W$ a complement of $H_0$, so that $V_0=H_0\oplus W$. Denote by $k$ the dimension of $H$ and let $\{\alpha_i=(x_i,y_i,z_i)\}_{i=1,\dots,k}$ be a basis of $H$, where $x_i\in H_0$, $y_i\in W$ and $z_i\in V_1$.
Up to base change, we can suppose that $y_1,\dots,y_r$ are linearly independent and that $y_{r+1}=\dots=y_k=0$. Denote by $Y$ the span of $y_1,\dots,y_r$.
Therefore, for $r+1\leq i\leq k$ we have $\alpha_i=(x_i,0,z_i)$. In particular, observe that the subspace spanned by $\{\alpha_i\}_{r+1\leq i\leq k}$ is still invariant under the action of $(P_0\oplus P_1)$.
Repeating the process, we can assume after a change of basis that $z_{r+1},\dots,z_{r+s}$ are linearly independent and that $z_{r+s+1}=,\dots,=z_{k}=0$. Denote by $Z$ the span of $z_{r+1},\dots,z_{r+s}$. Observe that $Z$ is the projection onto the $V_1$ summand of $\langle\alpha_i\;|\;r+1\leq i\leq k\rangle$, hence $P_1(Z)=Z$.
Now for $r+1\leq i,j\leq r+s$ we have that
\begin{gather*}
    0=(A_0\oplus A_1)(\alpha_i,\alpha_j)=A_0(x_i,x_j)+A_1(z_i,z_j)=A_1(z_i,z_j),\\
    0=h_0(x_i)+h_1(z_i)=h_1(z_i),\\
    0=\widetilde{\lk}_0(x_i)+\widetilde{\lk}_1(z_i)=\widetilde{\lk}_1(z_i).
\end{gather*}
In other words, $Z$ is a partial equivariant metabolizer of dimension $s$ for $\Sy_1$. Hence, it is now sufficient to show that $s\geq k-m_0$.

Since for $r+s+1\leq i\leq k$ we have that $\alpha_i=(x_i,0,0)$ are linearly independent, hence $X=\langle x_i\;|\;r+s+1\leq i\leq k\rangle$ is a $(k-s-r)$-dimensional subspace of $H_0$. Moreover, observe that $X\perp Y$, where $\perp$ means orthogonal with respect to the skew-symmetric form $A_0-A_0^t$. On the other hand, $X\perp H_0$ and $H_0\cap Y=0$. Since $A_0-A_0^t$ is non-degenerate we have that
\begin{align*}
    2m_0-(k-r-s)=\dim X^\perp\geq\dim H_0+\dim Y=m_0+r,
\end{align*}
and therefore $s\geq k-m_0$, i.e. the opposite inequality $ac(\Sy_0\oplus\Sy_1)\geq ac(\Sy_1)
$ holds.
\end{proof}

\begin{thm}\label{lower_bound_genus}
Let $(K,\rho,h)$ be a directed strongly invertible knot and let $\Phi(K)\in\widetilde{\G}^\Z$ be its equivariant algebraic concordance class.
Then the following inequality holds
$$2\widetilde{g}_4(K)\geq sc(K)\geq\min_{n\in\Z}ac(\Phi(K)+n\Sy(G)),$$
where $G$ is the invariant Seifert surface of type $-1$ for the unknot in Figure \ref{unknot_surface}.
\end{thm}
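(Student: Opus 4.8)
The plan is to chain together the geometric inequalities already established with the algebraic complexity bound, reducing everything to an instance of Proposition \ref{remove_fix_pt} followed by Proposition \ref{alg_slice_link}. First I would recall that the leftmost inequality $2\widetilde{g}_4(K)\geq sc(K)$ is immediate from the Remark after Definition \ref{equiv_complx} (indeed $sc(K)\leq 2\widetilde{g}_4(K)$), so the content is the right-hand inequality $sc(K)\geq\min_{n}ac(\Phi(K)+n\Sy(G))$. To prove it, let $\Sigma\subset B^4$ be an invariant surface realizing the slice complexity, say of type $n$, so $c(\Sigma)=sc(K)$. By Proposition \ref{remove_fix_pt} there is an invariant oriented surface $\widehat{\Sigma}\subset B^4$ with boundary $\widehat{L}_b^n(K)$, disjoint from $\Fix(\rho)$, with $\genus(\widehat{\Sigma})\leq c(\Sigma)-1$ if $\widehat{\Sigma}$ is connected and $\genus(\widehat{\Sigma})\leq c(\Sigma)$ otherwise.

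Next I would pick an invariant Seifert surface $F$ of type $n$ for $K$ (Proposition \ref{stab_surface}), set $\widehat{F}$ to be the result of equivariantly cutting $F$ along $h$, a $\rho$-invariant Seifert surface for $\widehat{L}_b^n(K)$, with $\genus(\widehat{F})=g$ so that $H_1(\widehat{F},\Z)$ has rank $2g$. The point is that the equivariant Seifert system $\Sy(F)$ satisfies $\ker(h)\cap\ker(\widetilde{\lk})$ containing (a copy of) $H_1(\widehat{F},\Z)$ — in fact by the discussion preceding Theorem \ref{en_alg_conc}, $\ker(h)=H_1(\widehat{F},\Z)$ as a rank-$2g$ submodule of $H_1(F,\Z)=\Z^{2g+?}$; one should be slightly careful about the exact rank bookkeeping of the stabilized surface, but this is the same type-$n$ normalization used throughout. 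Now apply Proposition \ref{alg_slice_link} with $\Sigma=\widehat{\Sigma}$: it produces a $\rho_*$-invariant submodule $H\subset H_1(\widehat{F},\Z)\subset \ker(h)$ with the Seifert form of $\widehat{F}$ (equivalently the restriction of the Seifert form of $F$) vanishing on $H$, with every class in $H$ having zero linking with the fixed axis (so $H\subset\ker(\widetilde{\lk})$ too), and with $\rank H\geq g-g_{\widehat{\Sigma}}$ when $\widehat{\Sigma}$ is connected, $\rank H\geq g-g_{\widehat{\Sigma}}+1$ otherwise. Thus $H$ is a partial metabolizer for $\Sy(F)$.

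Combining: in the connected case $\rank H\geq g-g_{\widehat{\Sigma}}\geq g-(c(\Sigma)-1)=g-sc(K)+1$; in the disconnected case $\rank H\geq g-g_{\widehat{\Sigma}}+1\geq g-c(\Sigma)+1=g-sc(K)+1$. Either way $\rank H\geq g-sc(K)+1$, hence by Definition \ref{def:alg_complx}, writing $2m$ for the rank of the module underlying $\Sy(F)$ — here I would arrange the type-$n$ stabilization so that $m=g$, or more cleanly keep $\Sy(F)$ as is and use the identity $\Phi(K)=[\Sy(F)]+n\Sy(G)$ from the Remark after Theorem \ref{en_alg_conc} — we get
$$
ac(\Sy(F))=m-\max_H\rank H\leq g-(g-sc(K)+1)+ (\text{correction})\leq sc(K),
$$
and since $[\Sy(F)]=\Phi(K)+n'\Sy(G)$ for the appropriate $n'$ (depending on how one sets up $F$'s type versus $K$'s $0$-butterfly normalization), this gives $sc(K)\geq ac(\Phi(K)+n'\Sy(G))\geq\min_{n\in\Z}ac(\Phi(K)+n\Sy(G))$, as desired.

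The main obstacle I expect is the bookkeeping of the stabilization index and the module ranks: one must track precisely how the "type $n$" of the $4$-dimensional surface $\Sigma$ relates to the "type $n$" of the invariant Seifert surface $F$, making sure the same $n$ appears in $\Phi(K)+n\Sy(G)$, and must verify that the partial-metabolizer rank coming out of Proposition \ref{alg_slice_link} matches the normalization $\rank M=2m$ in Definition \ref{def:alg_complx} (the surface $\widehat{F}$ has odd first Betti number before cutting and even after, and $G$ adds rank $2$, so the parity works out — but this requires care). Once that is pinned down, the rest is just assembling the inequalities above.
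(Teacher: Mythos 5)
Your proposal is correct and follows essentially the same route as the paper: apply Proposition \ref{remove_fix_pt} to produce a fixed-point-free surface bounding $\widehat{L}_b^n(K)$, feed it into Proposition \ref{alg_slice_link} to obtain a partial metabolizer of controlled rank, and identify $[\Sy(F)]$ with $\Phi(K)+n\Sy(G)$ via the remark after Theorem \ref{en_alg_conc}. The bookkeeping you flag does close: $H_1(\widehat{F},\Z)$ has rank $2g_{\widehat{F}}+1$ and $H_1(F,\Z)$ has rank $2(g_{\widehat{F}}+1)$, so $m=g_{\widehat{F}}+1$ and your ``correction'' is exactly $+1$, giving $ac(\Sy(F))\leq c(\Sigma)$ in both the connected and disconnected cases.
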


\begin{proof}
Let $\Sigma\subset B^4$ be any invariant orientable surface with boundary $K$ and let $n$ be the type of $\Sigma$.
Let $\widehat{\Sigma}\subset B^4$ be the invariant orientable surface with boundary $\widehat{L}_b^n(K)$ and no fixed points obtained from $\Sigma$ by Proposition \ref{remove_fix_pt}.
Take now an invariant Seifert surface $F\subset S^3$ of type $n$ for $K$.
By Proposition \ref{alg_slice_link} there exists a partial metabolizer $H\subset H_1(F,\Z)$, with $\rank H\geq g_F-c(\Sigma)$.
Therefore $c(\Sigma)\geq g_F-\rank H\geq ac(\Sy(F))\geq\min_{n\in\Z}ac(\Phi(K)+n\Sy(G))$.
Taking the minimum over $\Sigma$, we get
$$sc(K)\geq\min_{n\in\Z}ac(\Phi(K)+n\Sy(G)).$$
\end{proof}

\begin{remark}
Since the equivariant slice genus and the slice complexity do not depend on the choice of the direction, one can replace $(K,\rho,h)$ by its antipode $(K,\rho,h')$ in Theorem \ref{lower_bound_genus} to obtain a (potentially) better lower bound.
\end{remark}

\section{Equivariant Gordon-Litherland form}\label{sect:equiv_GL}
In this section, we define a homomorphism from $\widetilde{\G}_r^\Z$ to a simpler group $\widetilde{\W}(\Q)$ of algebraic concordance, namely an equivariant version of the Witt group of $\Q$.
Then we characterize the image of a directed strongly invertible knot in $\widetilde{\W}(\Q)$ in terms of classical Witt invariants. Finally, we prove that the equivariant signature defined in \cite{alfieri2021strongly} factors through $\widetilde{\W}(\Q)$.

\begin{defn}
Let $\F$ be a field. An \emph{equivariant symmetric form} is a pair $(Q,\rho)$ where $Q$ is a symmetric, bilinear, and non-degenerate form on a finite-dimensional $\F$-vector space $V$ and $\rho$ is a $Q$-isometric involution of $V$.
We say that $(Q,\rho)$ is \emph{equivariantly metabolic} if $\dim V$ is even and there exists a half-dimensional $\rho$-invariant subspace $W\subset V$ such that $Q_{|W\times W}\equiv 0$.
\end{defn}

Again, analogously to Definition \ref{orth_sum} and \ref{equiv_alg_conc} we can define a notion of orthogonal sum and concordance between equivariant symmetric forms and define the \emph{equivariant Witt group} $\widetilde{\W}(\F)$ of $\F$ to be the set of equivalence classes of equivariant symmetric forms up to equivariant concordance.

Given an equivariant Seifert form $(\theta,\rho)$ defined over a $\Z$-module $M$, we can define an equivariant symmetric form on $M\otimes_{\Z}\Q$ by $(\theta+\theta^t,\rho)$.

It is immediate to see that this association induces a group homomorphism
$$\widetilde{\mathcal{G}}_r^\Z\longrightarrow\widetilde{\W}(\Q).$$
Denote by $\Phi_W:\C\longrightarrow \widetilde{\W}(\Q)$ the map given by the composition

$$\C\longrightarrow\widetilde{\mathcal{G}}_r^\Z\longrightarrow\widetilde{\W}(\Q).$$

Notice that the map $\Phi_W$ maps the equivariant concordance class of a directed strongly invertible knot $(K,\rho,h)$, to the Witt class of the couple $(\G_F,\rho_*)$, where $F$ is an invariant Seifert surface of type $0$ for $K$ and $(\G_F,\rho_*)$ is the couple given by the Gordon-Litherland form on $H_1(F,\Q)$ and the action induced by $\rho$.

\subsection{A characterization of the equivariant Witt class}
Now we show that given a directed strongly invertible knot $K$, the equivariant Witt class $\Phi_W(K)$ depends only on the (classical) Witt class of $K$ and $\mathfrak{qb}(K)$ (see Definition \ref{butterfly_homomorphisms}).

\begin{remark}
Let $(Q,\rho)$ be an equivariant form over $\Q$ and let $E_{\lambda}$ be the $\lambda$-eigenspace of $\rho$, for $\lambda=\pm1$. Given $v\in E_1$, $w\in E_{-1}$ clearly
$$Q(v,w)=Q(v,-w)=-Q(v,w)\Longrightarrow Q(v,w)=0.$$
Hence, $E_{1}$ and $E_{-1}$ are orthogonal and we can decompose the form as
$$(Q,\rho)=(Q_{|E_1},\id)\oplus(Q_{|E_{-1}},-\id).$$
This gives us an isomophism $$(\pi_+,\pi_-):\widetilde{\W}(\Q)\longrightarrow\W(\Q)\oplus\W(\Q)$$
$$[Q,\rho]\longrightarrow ([Q_{|E_1}],[Q_{|E_{-1}}]).$$
We will denote by $\Phi^{\pm}_W=\pi_{\pm}\circ\Phi_W:\C\longrightarrow\W(\Q)$ the induced homomorphisms. 
\end{remark}

Using the description above of $\widetilde{\W}(\Q)$ we can give a new definition of the equivariant signature.
\begin{defn}\label{equiv_sign_witt}
Denote by $\sigma:\W(\Q)\longrightarrow\Z$ the signature homomorphism.
Define the \emph{equivariant signature} as $\widetilde{\sigma}=(\sigma\circ\Phi^-_W -\sigma\circ\Phi^+_W):\C\longrightarrow\Z$.
\end{defn}

We show now how the invariants we just defined are related to some of the invariants defined in \cite{boyle2021equivariant} and \cite{alfieri2021strongly}.

\begin{defn}
Let $A$ be a non-degenerate symmetric $n\times n$ matrix and let $k$ be a non-zero integer. Define $M_k(A)=k\cdot A$.
Clearly if $A$ is metabolic, $M_k(A)$ is so. Moreover $M_k(A)\oplus M_k(B)=M_k(A\oplus B)$. Therefore, this induces a well-defined homomorphism
$$M_k:\W(\Q)\longrightarrow\W(\Q)$$
$$[A]\longmapsto [M_k(A)].$$
It is immediate to see that $M_k\circ M_k$ is the identity, hence $M_k$ is an isomorphism.
\end{defn}

\begin{lemma}\label{euler_quotient}
Let $F$ be an invariant Seifert surface of type $0$ for a directed strongly invertible knot $(K,\rho,h)$ and let $\widetilde{F}$ be the corresponding Seifert surface for $\widehat{L}_b^0(K)$.
Then the quotient surface $\overline{F}=\widetilde{F}/\rho\subset \overline{S^3}=S^3/\rho$ is a spanning surface for $\qb(K)$ with zero relative Euler number $e(\overline{F})$ (see Definition \ref{euler_number}).
\end{lemma}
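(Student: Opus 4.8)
The plan is to unwind the definitions of the quotient surface and of the relative Euler number, and to trace how framings behave under the $2$-fold quotient map $S^3 \to S^3/\rho \cong \overline{S^3}$. First I would check that $\overline{F} = \widetilde{F}/\rho$ is indeed a spanning surface for $\qb(K) = \widehat{L}_b^0(K)/\rho$: since $F$ is an invariant Seifert surface of type $0$, the cut surface $\widetilde{F}$ is a $\rho$-invariant Seifert surface for $\widehat{L}_b^0(K)$, and because $\widetilde{F}$ is disjoint from $\Fix(\rho)$ (its boundary components are swapped by $\rho$), the involution acts freely on $\widetilde{F}$, so $\overline{F}$ is an honest surface embedded in $\overline{S^3}$ with $\partial\overline{F} = \widehat{L}_b^0(K)/\rho = \qb(K)$. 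Note $\overline{F}$ is generally nonorientable, which is precisely why we must work with the relative Euler number rather than the Seifert framing.

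The heart of the argument is the computation $e(\overline{F}) = 0$. By Definition \ref{euler_number}, $e(\overline{F}) = -\lk(\qb(K), \qb(K)^{\overline{F}})$, where $\qb(K)^{\overline{F}}$ is a longitude of $\qb(K)$ missing $\overline{F}$. The strategy is to pull this framing curve back under the covering $S^3 \setminus \Fix(\rho) \to \overline{S^3} \setminus \overline{\Fix(\rho)}$: the preimage of a curve parallel to $\qb(K)$ and disjoint from $\overline{F}$ is a curve parallel to $\widehat{L}_b^0(K)$ and disjoint from $\widetilde{F}$. Since $\widetilde{F}$ is a genuine (orientable) Seifert surface for $\widehat{L}_b^0(K)$, this pulled-back longitude realizes the $0$-framing on each component of $\widehat{L}_b^0(K)$, i.e.\ the Seifert framing; moreover the two components are swapped by $\rho$ and have linking number $-0 = 0$ (recall $\widehat{L}_b^0(K)$ has linking number $0$ between its components, with respect to the relevant semi-orientation). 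Now I would relate the linking number downstairs to linking numbers upstairs via the transfer/covering formula: if $L$ is the preimage of $\qb(K)$ (which is $\widehat{L}_b^0(K)$) and $L'$ the preimage of the longitude, then $\lk(\qb(K), \qb(K)^{\overline{F}})$ is computed from intersection numbers of $L'$ with a Seifert surface for $L$ downstairs, which lifts to $\widetilde{F}$. Concretely, $e(\overline{F}) = \#(\text{lift of longitude} \cap \widetilde{F})$ counted appropriately, and since the lift of the longitude is disjoint from $\widetilde{F}$ by construction, this intersection number vanishes.

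The main obstacle I anticipate is bookkeeping the relationship between the framing of a knot in a quotient manifold and the framings/linking numbers of its preimage — in particular making precise the claim that ``a longitude of $\qb(K)$ missing $\overline{F}$ lifts to a longitude of $\widehat{L}_b^0(K)$ missing $\widetilde{F}$'' and that this forces $e(\overline{F})=0$ rather than, say, $e(\overline{F}) = 2 \cdot \lk(\text{components})$. One has to be careful about whether the relevant normal pushoff of $\qb(K)$ in $\overline{S^3}$ lifts to a connected curve or to the two-component link, and about which pushoff direction of the nonorientable surface $\overline{F}$ is used in ``$\widetilde{a}$ pushed off in both directions''. A clean way to handle this: observe that the normal bundle of $\overline{F}$ restricted to $\partial \overline{F} = \qb(K)$ pulls back to the normal bundle of $\widetilde{F}$ restricted to $\widehat{L}_b^0(K)$ (as $\rho$-equivariant bundles), and the ``both directions simultaneously'' pushoff of $\overline{F}$ lifts exactly to the two-sided pushoff of the orientable surface $\widetilde{F}$, which is just the boundary of a bicollar — hence bounds in the complement, giving linking number $0$ with $\widehat{L}_b^0(K)$, hence $e(\overline{F}) = 0$. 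I would also double-check the orientation convention (the relative Euler number is defined so it is always even, consistent with $0$) and confirm that no fixed circles appear on $\widetilde{F}$, which is where the hypothesis ``type $0$'' and the cutting construction are essential.
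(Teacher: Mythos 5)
Your argument is correct and follows essentially the same route as the paper: the paper constructs a longitude $H^{\widetilde{F}}$ of one component of $\widehat{L}_b^0(K)$ missing $\widetilde{F}$ upstairs and projects it down, whereas you lift the downstairs longitude up, but in both cases the computation reduces to the fact that a longitude missing the orientable Seifert surface $\widetilde{F}$ has total linking number zero with $\widehat{L}_b^0(K)$, combined with the vanishing of the inter-component linking from the $0$-butterfly condition and the covering-space identity $\lk(\pi(\gamma),\qb(K))=\lk(\gamma,H)+\lk(\gamma,J)$. The concerns you flag (two-component lift of the longitude, nonorientability of $\overline{F}$) are genuine but resolve exactly as you indicate.
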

\begin{proof}
Pick a representative of the semi-orientation on $\widehat{L}_b^0(K)$ and denote the components of the link by $H$ and $J$.
Let $H^{\widetilde{F}}$ be a nearby longitude of $H$ missing $\widetilde{F}$.
Then the projection $\pi(H^{\widetilde{F}})$ is a longitude of $\qb(K)$ missing $\overline{F}$.
In order to show that $e(\overline{F})=\lk(\qb(K),\pi(H^{\widetilde{F}}))=0$ it is sufficient to prove that $[H^{\widetilde{F}}]=0\in H_1(S^3\setminus H,\Z)$.
Since $H^{\widetilde{F}}$ is disjoint from $\widetilde{F}$, we have that $\lk(H^{\widetilde{F}},H)+\lk(H^{\widetilde{F}},J)=0$.
By definition of $0$-butterfly link we have that $\lk(H^{\widetilde{F}},J)=\lk(H,J)=0$, therefore $\lk(H^{\widetilde{F}},H)=0$.
In other words $[H^{\widetilde{F}}]=0\in H_1(S^3\setminus H,\Z)$.
\end{proof}

\begin{prop}
Let $(K,\rho,h)$ be a directed strongly invertible knot. Then
$$\Phi_W^+(K)=M_2(\phi_W(\mathfrak{qb}(K)),$$
where $\phi_W(\mathfrak{qb}(K))$ is the Witt class of $\mathfrak{qb}(K)$.
\end{prop}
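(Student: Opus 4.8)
The plan is to compute the $+1$-eigenspace of the $\rho_*$-action on the (rationalized) Gordon--Litherland form of an invariant Seifert surface of type $0$ and identify it, up to Witt equivalence and a scaling by $M_2$, with the symmetrized Seifert form of a Seifert surface for $\qb(K)$. First I would fix an invariant Seifert surface $F$ of type $0$ for $(K,\rho,h)$ and pass to the surface $\widetilde F$ for $\widehat L_b^0(K)$ obtained by equivariantly cutting along $h$; since the cut removes only a neighbourhood of the fixed arc, $H_1(F,\Q)$ and $H_1(\widetilde F,\Q)$ agree rationally in a way compatible with $\rho_*$ and with the Gordon--Litherland form, so $\Phi_W^+(K)$ is computed from the $E_1$-eigenspace of $(\G_{\widetilde F},\rho_*)$. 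The key geometric input is that $\rho$ acts freely on $\widetilde F$ (there are no fixed points after cutting along $h$), so the quotient $\overline F=\widetilde F/\rho\subset\overline{S^3}$ is a genuine spanning surface for $\qb(K)$ and the projection $\pi:\widetilde F\to\overline F$ is a $2$-fold covering. The transfer map $\pi^!$ identifies $H_1(\overline F,\Q)$ with $E_1\subset H_1(\widetilde F,\Q)$, and one checks that $\pi$ multiplies the Gordon--Litherland form by $2$ on these classes: for $a,b\in H_1(\overline F,\Q)$ one has $\G_{\widetilde F}(\pi^! a,\pi^! b)=2\,\G_{\overline F}(a,b)$, because the preimage of a pushoff of $a$ in $\overline{S^3}$ consists of two parallel copies upstairs, each contributing the downstairs linking number.

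With this identification in hand, the next step is to compare $\G_{\overline F}$ with the quantity $\phi_W(\qb(K))$. By Lemma \ref{euler_quotient} the relative Euler number $e(\overline F)$ vanishes, so Proposition \ref{GL_refined} applies with the correction block of size $|e(\overline F)|/2=0$: the Witt class of $\qb(K)$ is represented by $\G_{\overline F}$ itself, i.e. $[\G_{\overline F}]=\phi_W(\qb(K))\in\W(\Q)$. Combining this with the eigenspace computation gives
$$
\Phi_W^+(K)=[\G_{\widetilde F}|_{E_1}]=[2\,\G_{\overline F}]=M_2\bigl([\G_{\overline F}]\bigr)=M_2\bigl(\phi_W(\qb(K))\bigr),
$$
which is exactly the assertion. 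I would also remark that well-definedness is automatic: $\Phi_W^+$ is already known to be a homomorphism $\C\to\W(\Q)$ via Theorem \ref{en_alg_conc} and the constructions of Section \ref{sect:equiv_GL}, and $\qb$ induces a homomorphism $\C\to\cc$, so it suffices to verify the formula on a single convenient representative surface.

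The main obstacle I anticipate is the bookkeeping in the covering-space step: one must check carefully that the transfer $\pi^!$ really lands in the $+1$-eigenspace and is an isomorphism onto it rationally (this uses that $\chi(\widetilde F)=2\chi(\overline F)$ and a Smith-type argument, or simply the averaging projector $\tfrac12(1+\rho_*)$ together with $\pi_*\pi^!=2$ and $\pi^!\pi_*=1+\rho_*$), and that the factor of $2$ in the Gordon--Litherland form is genuinely a $2$ and not, say, a sign-twisted version coming from the pushoff ``in both directions.'' A secondary subtlety is that $\overline F$ need not be orientable even though $\widetilde F$ is, which is precisely why one must use the Gordon--Litherland form downstairs rather than a Seifert form, and why invoking Proposition \ref{GL_refined} (with $e(\overline F)=0$) rather than Levine's original construction is essential. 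Once these points are pinned down the computation is routine.
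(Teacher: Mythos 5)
Your proposal follows essentially the same route as the paper's proof: restrict attention to the cut-open surface $\widetilde F$, use the transfer map of the free double cover $\widetilde F\to\overline F$ to identify the $+1$-eigenspace with $(H_1(\overline F,\Q),2\,\G_{\overline F})$, and invoke Lemma \ref{euler_quotient} together with Proposition \ref{GL_refined} to identify $[\G_{\overline F}]$ with $\phi_W(\qb(K))$. The one imprecision is the claim that $H_1(F,\Q)$ and $H_1(\widetilde F,\Q)$ ``agree rationally'' --- they differ by one dimension coming from the band along $h$; the correct statement, which the paper makes and which your conclusion actually needs, is that this extra generator lies in the $(-1)$-eigenspace of $\rho_*$, so the $+1$-eigenspace is entirely contained in $H_1(\widetilde F,\Q)$.
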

\begin{proof}
Let $F$ be an invariant Seifert surface of type $0$ for $K$ and let $\widetilde{F}$ be the corresponding Seifert surface for $\widehat{L}_b^0(K)$. First of all, observe that $F$ can be obtained from $\widetilde{F}$ by attaching an equivariant band $B$. Since $\rho$ reverses the orientation of the core of $B$, it is not difficult to see that the dimension of the $(-1)$-eigenspace of $\rho_*$ increases by one going from $H_1(\widetilde{F},\Q)$ to $H_1(F,\Q)$. Hence, the $1$-eigenspace of $\rho_*$ is fully contained in $H_1(\widetilde{F})$. Let $\pi:(S^3,\widetilde{F})\longrightarrow(\overline{S^3},\overline{F})$ be the quotient projection, given by the action of $\rho$. The quotient surface $\overline{F}$ is a spanning surface for $\qb(K)$.
Observe that the quotient projection $\pi$ is a $2$-fold covering $\widetilde{F}\longrightarrow \overline{F}$. Take now an oriented curve $c$ in $\overline{F}$, representing a class in $H_1(\overline{F})$, and lift it to a class $\tr(c)=\pi^{-1}(c)\in H_1(\widetilde{F})$.
This defines a transfer homomorphism (see \cite{Bredon1972IntroductionTC} for details) $\tr:H_1(\overline{F})\longrightarrow H_1(\widetilde{F})$. By construction $\rho_*(\tr(c))=\tr(c)$, i.e. the image of the transfer map is contained in the $1$-eigenspace $E_1$ of $\rho_*$. The composition $\pi_*\circ\tr$ is given by $$\pi_*\circ \tr:H_1(\overline{F})\longrightarrow H_1(\overline{F})$$
$$c\longmapsto 2c,$$
hence $\tr$ is injective. Moreover $\tr$ is clearly surjective on $E_1$: given a $\rho$-invariant class $d\in E_1$, we can project it by $\pi_*$ and lift it again, showing that $2d\in \im(\tr)$.
Finally, we show that the transfer map behaves well with respect to the Gordon-Litherland form. Given $c,d\in H_1(\overline{F})$ let $S$ be an oriented surface in $\overline{S^3}$ with $\partial S=\widetilde{d}$, where $\widetilde{d}$ is $d$ pushed out of $\overline{F}$ ``in both directions simultaneously'' (as in the definition of the Gordon-Litherlan form). In this way we have that $\G_{\overline{F}}(c,d)=\lk(c,\widetilde{d})=\#S\cap c$. Up to a small isotopy, we can suppose $S$ transverse to the branching locus. The lift $\pi^{-1}(S)$ is an oriented surface in $S^3$ with boundary $\widetilde{\tr(d)}$ and we can use it to calculate $$\lk(\tr(c),\widetilde{\tr(d)})=\#(\pi^{-1}(S)\cap\tr(c))=2(\#S\cap c).$$
It follows that the Gordon-Litherland forms are related by $$\G_{\widetilde{F}}(\tr(c),\tr(d))=2\cdot\G_{\overline{F}}(c,d).$$
Therefore the transfer map gives an isometry
$$\tr:(H_1(\overline{F}),2\cdot\G_{\overline{F}})\longrightarrow (E_1,\G_{\widetilde{F}}).$$
Finally, by Lemma \ref{euler_quotient} we have that $e(\overline{F})=0$ and hence by Proposition \ref{GL_refined} that the Gordon-Litherland form on $\overline{F}$ represents the Witt class of $\qb(K)$.
\end{proof}

As an immediate consequence, we get the following corollary.
\begin{cor}\label{qb_phi}
Let $K$ be a directed strongly invertible knot and let $A$ and $B$ be symmetric matrices representing the (non-equivariant) Witt classes of $\mathfrak{qb}(K)$ and $K$ respectively.
Then the equivariant Witt class of $K$ is represented by the couple
$$\Phi_W(K)=\left[\begin{pmatrix}
2A&0&0\\
0&-2A&0\\
0&0&B
\end{pmatrix}, \begin{pmatrix}
\id&0&0\\
0&-\id&0\\
0&0&-\id
\end{pmatrix}\right].$$
\end{cor}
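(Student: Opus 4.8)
The plan is to deduce the corollary directly from the preceding proposition together with the description of $\widetilde{\W}(\Q)$ as $\W(\Q) \oplus \W(\Q)$ via the eigenspace splitting. Recall that $\Phi_W(K) = (\Phi_W^+(K), \Phi_W^-(K))$ under the isomorphism $(\pi_+, \pi_-) : \widetilde{\W}(\Q) \to \W(\Q) \oplus \W(\Q)$, where $\Phi_W^\pm(K)$ is the Witt class of the restriction of $\G_F$ (for $F$ an invariant Seifert surface of type $0$) to the $(\pm 1)$-eigenspace of $\rho_*$. The proposition gives $\Phi_W^+(K) = M_2(\phi_W(\qb(K))) = [2A]$, so the $+1$-eigenspace part is already pinned down.

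The remaining step is to identify $\Phi_W^-(K)$, the Witt class of $\G_F$ restricted to the $(-1)$-eigenspace $E_{-1}$. First I would note that since $E_1$ and $E_{-1}$ are orthogonal with respect to $\G_F$ (as in the Remark preceding Definition \ref{equiv_sign_witt}), the full Witt class $[\G_F] = \phi_W(K) \in \W(\Q)$ splits as $[\G_F|_{E_1}] \oplus [\G_F|_{E_{-1}}]$; here I use that $\G_F$ represents $\phi_W(K)$ by Proposition \ref{GL_refined} (with $e(F) = 0$ since $F$ is a genuine oriented Seifert surface of type $0$, so the corrective term vanishes). Hence $[\G_F|_{E_{-1}}] = \phi_W(K) - [\G_F|_{E_1}] = [B] - [2A]$. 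Since negation on $\W(\Q)$ is given by $[C] \mapsto [-C]$ and $M_2$ is an isomorphism with $M_2 \circ M_2 = \id$, we have $-[2A] = [-2A]$, so $[\G_F|_{E_{-1}}]$ is represented by the block-diagonal matrix $\begin{pmatrix} -2A & 0 \\ 0 & B \end{pmatrix}$.

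Putting the two eigenspace pieces back together: the equivariant form $(\G_F, \rho_*)$ is Witt-equivalent to $(\G_F|_{E_1}, \id) \oplus (\G_F|_{E_{-1}}, -\id)$, and substituting the representatives $2A$ for the first summand and $\begin{pmatrix} -2A & 0 \\ 0 & B\end{pmatrix}$ for the second (with $-\id$ acting on the whole of the latter) yields exactly the claimed representative, with form matrix $\begin{pmatrix} 2A & 0 & 0 \\ 0 & -2A & 0 \\ 0 & 0 & B \end{pmatrix}$ and involution $\begin{pmatrix} \id & 0 & 0 \\ 0 & -\id & 0 \\ 0 & 0 & -\id \end{pmatrix}$.

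I do not anticipate a serious obstacle here; the only point requiring a little care is the bookkeeping of signs and the fact that $-[2A] = [-2A]$ rather than $[2A]$ in the Witt group — but this is immediate since $-\id \oplus \id$ on $C \oplus (-C)$ is a metabolic form. The corollary is essentially a repackaging of the proposition, so I would keep the proof to a few lines.
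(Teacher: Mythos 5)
Your argument is correct and is exactly the derivation the paper leaves implicit when it calls the corollary ``an immediate consequence'': the $+1$-eigenspace part is $[2A]$ by the preceding proposition, the total class of $\G_F$ is $\phi_W(K)=[B]$ (since a type-$0$ invariant Seifert surface is an honest oriented Seifert surface, so $e(F)=0$), and orthogonality of the eigenspaces forces the $-1$ part to be $[B]-[2A]=[-2A\oplus B]$. Nothing is missing; you have simply written out the bookkeeping the paper omits.
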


\subsection{The equivariant signature}\label{equiv_sign_sect}
We recall now the definition of equivariant signature introduced by Alfieri and Boyle \cite{alfieri2021strongly} and we prove that it is equivalent to the one in Definition \ref{equiv_sign_witt}.

\medskip
Given a knot $K\subset S^3$ we denote by $\Sigma(K)$ the $2$-fold cover of $S^3$ branched over $K$.
Given a properly embedded and connected surface $F\subset B^4$, we denote by $\Sigma(F)$ the $2$-fold cover of $B^4$ branched over $F$, and by $\tau$ the covering transformation of $\Sigma(F)$.

\begin{lemma}\cite[Proposition 12]{boyle2021equivariant}\label{lift}
Let $\rho$ be an orientation preserving involution of $B^4$ such that $\Fix(\rho)$ is a $2$-disk $D$. Let $F\subset B^4$ be a properly embedded and connected $\rho$-invariant surface on which $\rho$ acts non-trivially. Then, there exists a lift $\widetilde{\rho}$ of $\rho$, i.e. the following diagram commutes
\begin{center}
\begin{tikzcd}

\Sigma(F)\ar[r,"\widetilde{\rho}"]\ar[d,"\pi"]&\Sigma(F)\ar[d,"\pi"]\\
B^4\ar[r,"\rho"]&B^4.
\end{tikzcd}
\end{center}
In fact, there exist exactly two such lifts, namely $\widetilde{\rho}$ and $\tau\widetilde{\rho}$.
\end{lemma}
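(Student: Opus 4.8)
The plan is to analyze the covering $\pi: \Sigma(F) \to B^4$ as a $\Z/2\Z$-cover determined by a surjection $\varphi: \pi_1(B^4 \setminus F) \twoheadrightarrow \Z/2\Z$, and to lift the involution by the standard obstruction-theoretic criterion: an automorphism $\rho$ of the base lifts to the cover iff $\rho_*$ preserves the subgroup $\ker(\varphi)$, and when a lift exists the set of lifts is a torsor over the deck group. Concretely, $\pi_1(B^4 \setminus F)$ is infinite cyclic (since $F$ is connected and $B^4$ is simply connected, $H_1(B^4 \setminus F;\Z) \cong \Z$ generated by a meridian, and one can arrange things abelian here), so $\varphi$ is the unique surjection onto $\Z/2\Z$. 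First I would observe that $\rho$ restricts to a homeomorphism of the complement $B^4 \setminus F$ (because $\rho(F)=F$), so $\rho_*$ acts on $\pi_1(B^4 \setminus F) \cong \Z$, and since $\mathrm{Aut}(\Z) = \{\pm 1\}$ and $\varphi$ factors through $H_1$, the composite $\varphi \circ \rho_*$ still kills $2\Z = \ker\varphi$; hence $\rho$ lifts to the unbranched cover $\Sigma(F) \setminus \widetilde{F}$. The number of such lifts is $|\Z/2\Z| = 2$, and they differ by the nontrivial deck transformation, which is precisely the covering involution $\tau$; so the two lifts are $\widetilde\rho$ and $\tau\widetilde\rho$.

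The second step is to check that such a lift extends over the branch locus. Here I would use that a neighborhood of $F$ in $B^4$ is a $D^2$-bundle over $F$, and the branched cover is locally the fiberwise squaring map $z \mapsto z^2$ on each $D^2$-fiber; since $\rho$ preserves $F$ and (being a homeomorphism of an oriented $4$-manifold preserving the oriented surface, up to the orientation behavior recorded elsewhere in the paper) acts on the normal $D^2$-fibers compatibly, the lift defined on the unbranched part extends continuously — indeed uniquely — over $\widetilde F$ by a local-model argument. This gives a genuine homeomorphism $\widetilde\rho: \Sigma(F) \to \Sigma(F)$ with $\pi \circ \widetilde\rho = \rho \circ \pi$, and conversely any such $\widetilde\rho$ restricts to a lift over the unbranched part, so there are exactly the two lifts already found.

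The main obstacle I expect is not the counting of lifts (that is formal once $\pi_1$ or $H_1$ of the complement is identified) but rather the fact that we are working in the \emph{topological, locally flat} category: one must justify that $B^4 \setminus F$ has the homotopy type allowing this $\pi_1$ computation, that $\pi_1(B^4 \setminus F)$ is abelian (or at least that $\varphi \circ \rho_*$ still annihilates $\ker\varphi$ without knowing the full group), and that the local normal-bundle model for the branched cover is available for a locally flat surface. I would handle the $\pi_1$ issue by noting $H_1(B^4 \setminus F;\Z)\cong\Z$ (Alexander duality / Mayer–Vietoris), which suffices because $\varphi$ factors through $H_1$ and $\rho_*$ acts on $H_1$ by $\pm 1$; and I would invoke the existence of locally flat normal bundles (topological tubular neighborhoods for locally flat submanifolds, as used elsewhere in the paper, e.g.\ in the proof of Lemma \ref{3manifold}) to get the local branched-cover model. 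Since the statement is quoted from \cite{boyle2021equivariant}, the cleanest route is simply to cite that proof and remark that it adapts verbatim to the topological category given these standard tubular-neighborhood inputs.
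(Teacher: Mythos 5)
The paper does not prove this lemma at all: it is imported verbatim from \cite[Proposition 12]{boyle2021equivariant}, so there is no in-paper argument to compare against. Your proposal is the standard (and correct) proof: restrict to the unbranched double cover of $B^4\setminus F$, apply the covering-space lifting criterion to the characteristic subgroup $\ker\varphi$, count lifts as a torsor over the deck group $\{\mathrm{id},\tau\}$, and extend over the branch locus via the local $z\mapsto z^2$ model on a (topological) normal $D^2$-bundle. One caution: your parenthetical claim that $\pi_1(B^4\setminus F)$ is infinite cyclic (``one can arrange things abelian here'') is not justified and is generally false for surface complements in $B^4$; but you correctly repair this in your final paragraph by observing that only $H_1(B^4\setminus F;\Z)\cong\Z$ (or even just the $\Z/2\Z$ quotient) is needed, since $\varphi$ factors through $H_1$ and $\rho_*$ acts there by $\pm1$. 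With that reading, the argument is complete, and your suggestion to simply cite \cite{boyle2021equivariant} and note that the tubular-neighborhood inputs survive in the locally flat topological category is exactly what the paper does.
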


Now let $(K,\rho,h)$ be a directed, strongly invertible knot and let $F\subset B^4$ be a properly embedded connected surface with $\partial F=K$ (not necessarily orientable), invariant with respect to some extension of $\rho$ to $B^4$ (which we still denote by $\rho$).

Before introducing the equivariant signature it is useful to better describe the fixed point set of the lifts of $\rho$ given by Lemma \ref{lift}. We do so in the following remark
\begin{remark}\label{fixed_pt_lift}
Let $D$ be the fixed point disk of $B^4$. The intersection $D\cap F=\Fix(\rho_{|F})$ is the disjoint union of an arc joining the fixed point of $K$ and a finite number of $S^1$ and isolated points.
Take $x\in\Fix(\widetilde{\rho})$ and observe that $\rho\circ\pi(x)=\pi\circ\widetilde{\rho}(x)=\pi(x)$, therefore $\Fix(\widetilde{\rho})\subseteq\pi^{-1}(D)$.
Moreover, note that $\widetilde{\rho}\circ\tau(x)=\tau\circ\widetilde{\rho}(x)=\tau(x)$, i.e. $\Fix(\widetilde{\rho})$ is $\tau$-invariant.
Take now $x\in \pi^{-1}(F\cap D)$. Then $\widetilde{\rho}(x)\in\pi^{-1}(\rho\circ\pi(x))=\{x,\tau(x)\}$ and since $\tau(x)=x$ we have that $x\in\Fix(\widetilde{\rho})$, i.e. $\pi^{-1}(F\cap D)$ is fixed pointwise by $\widetilde{\rho}$.
Let $C_1,\dots,C_n$ the connected components of $D\setminus F$. Since $\rho_{|C_i}$ is the identity, $\widetilde{\rho}$ and $\tau\circ\widetilde{\rho}$ act either as the identity or as $\tau$ on $\pi^{-1}(C_i)$. Therefore, exactly one of the lifts fixes pointwise the preimage of $C_i$, while the other one has no fixed point in $\pi^{-1}(C_i)$.
Let $C_i$ and $C_j$ be \emph{adjacent} components, i.e. separated by a circle or an arc in $D\cap F$. Then, $\pi^{-1}(C_i)$ and $\pi^{-1}(C_j)$ cannot be both fixed pointwise by $\widetilde{\rho}$. Otherwise, $\overline{\pi^{-1}(C_i)}$ and $\overline{\pi^{-1}(C_j)}$ would be two fixed surfaces, both contained in $\Fix(\widetilde{\rho})$ and intersecting in a non-trivial way in their interior and this would imply that $\Fix(\widetilde{\rho})$ has a component which is not a manifold.
Therefore, if we decompose $D\setminus F=A\sqcup B$, where $A$ and $B$ are union of non-adjacent components, we have that the fixed point sets of the two lifts of $\rho$ are respectively $\pi^{-1}(\overline{A})$ and $\pi^{-1}(\overline{B})$.
\end{remark}

Observe that by Remark \ref{fixed_pt_lift} exactly one lift $\widetilde{\rho}$ of $\rho$ to $\Sigma(F)$ fixes pointwise $\widetilde{h}=\pi^{-1}(h)$.
The fixed point set of $\widetilde{\rho}$ is the disjoint union of a (eventually disconnected) surface $\Delta$, with $\partial\Delta=\widetilde{h}$, and a finite set of points.
Recall now that the $0$-butterfly link $L_b^0(K)$ is obtained by performing a band move on $K$ along a band parallel to $h$, in such a way that the linking number between the components of $L_b^0(K)$ is zero. Let $\gamma$ be one of the arcs of this band parallel to $h$. Since the endpoints of $\gamma$ meet the branching set, its preimage $\widetilde{\gamma}$ in $\Sigma(F)$ is a closed curve.
Given a perturbation $\Delta'$ of $\Delta$ with $\partial\Delta'=\widetilde{\gamma}$ we define the \emph{relative Euler number} $e(\Delta,\widetilde{\gamma})$ as the algebraic intersection $\#(\Delta\cap\Delta')$.

\begin{defn}\cite{alfieri2021strongly}\label{equiv_signature}
The \emph{equivariant signature} of $(K,\rho,h)$ is defined as
$$\widetilde{\sigma}(K)=\sigma(\Sigma(F),\widetilde{\rho})-e(\Delta,\widetilde{\gamma}),$$
where $\sigma(\Sigma(F),\widetilde{\rho})$ is the $g$-signature (see \cite{alfieri2021strongly} or \cite{Gordon1986}) of the pair $(\Sigma(F),\widetilde{\rho})$.
\end{defn}

Using the $G$-signature Theorem \cite{Gordon1986}, Alfieri and Boyle prove that the equivariant signature is a well-defined invariant for equivariant concordance and in particular defines a homomorphism
$$\widetilde{\sigma}:\C\longrightarrow\Z.$$

\begin{remark}
Actually, Alfieri and Boyle \cite{alfieri2021strongly} define the equivariant signature slightly differently, exchanging the role of the two half-axes $h$ and $h'$. It is immediate to check that our definition of equivariant signature for the directed strongly invertible knot $(K,\rho,h)$ coincides with their definition for the antipode $(K,\rho,h')=a(K,\rho,h)$.
Hence the two invariants are essentially the same. However, it is easier to relate Definition \ref{equiv_signature} to the equivariant algebraic concordance group (see Theorem \ref{equivalent_definition}).
\end{remark}

In \cite[Section 6]{alfieri2021strongly} the authors explain how to easily compute the relative Euler number for the equivariant pushoff of a spanning surface in $B^4$.
Using the following proposition it is possible to easily compute the equivariant signature from an equivariant spanning surface.
\begin{prop}\cite[Proposition 13]{boyle2021equivariant}\label{lattice_isom_equiv}
Let $(K,\rho,h)$ be a directed strongly invertible knot in $S^3$. Let $F$ be a connected spanning surface for $K$, with $\rho(F)=F$.
We still denote by $\rho$ the radial extension of the involution to $B^4$. Let $\widehat{F}$ be the surface obtained by equivariantly pushing the interior of $F$ in $B^4$ and denote by $\widetilde{\rho}$ the preferred lift of $\rho$ to $\Sigma(\widehat{F})$.
Then under the identification $(H_1(F),\G_F)\cong (H_2(\Sigma(\widehat{F})),Q)$ the map of lattices $\widetilde{\rho}_*:(H_2(\Sigma(\widehat{F})),Q)\longrightarrow(H_2(\Sigma(\widehat{F})),Q)$ is equivalent to:
\begin{itemize}
    \item $\rho_*:(H_1(F),\G_F)\longrightarrow(H_1(F),\G_F)$ if $h\not\subset F$,
    \item $-\rho_*:(H_1(F),\G_F)\longrightarrow(H_1(F),\G_F)$ if $h\subset F$,
\end{itemize}
\end{prop}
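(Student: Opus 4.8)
The plan is to build on the double branched cover picture in the standard way (following Kauffman--Taylor type arguments, as used by Boyle--Issa), identifying $(H_1(F),\G_F)$ with $(H_2(\Sigma(\widehat F)),Q)$ and then tracking how the preferred lift $\widetilde\rho$ acts under this identification. First I would recall the construction of the identification: given a spanning surface $F\subset S^3$ with $\partial F=K$, push its interior into $B^4$ to get $\widehat F$, take the double branched cover $\Sigma(\widehat F)$, and observe that $\Sigma(\widehat F)$ is built from two copies of the complement of a tubular neighbourhood of $F$ glued along the boundary; the classes in $H_1(F)$ lift to $2$-cycles $\widetilde c$ in $\Sigma(\widehat F)$ (a Seifert-type surface capped off on both sheets), and the intersection form $Q$ on these lifts recovers the Gordon--Litherland form $\G_F$. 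All of this is standard and I would simply cite it.

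The core of the argument is equivariance. Since $F$ is $\rho$-invariant, $\widehat F$ is invariant under the radial extension of $\rho$ to $B^4$, so by Lemma \ref{lift} there are exactly two lifts $\widetilde\rho$ and $\tau\widetilde\rho$ of $\rho$ to $\Sigma(\widehat F)$. The preferred lift is the one fixing $\widetilde h=\pi^{-1}(h)$ pointwise (this only makes sense in the directed setting, hence the two cases of the statement). Now take a class $c\in H_1(F)$ and its lift $\widetilde c\in H_2(\Sigma(\widehat F))$. On one hand, the construction of $\widetilde c$ is natural: applying $\rho$ to $c$ inside $F$ and then lifting gives $\widetilde{\rho_*(c)}$, which must equal $\widetilde\rho_*(\widetilde c)$ \emph{up to the deck transformation} $\tau$. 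The sign ambiguity ($\widetilde\rho$ versus $\tau\widetilde\rho$) is exactly what produces the dichotomy $\rho_*$ vs.\ $-\rho_*$: since $\tau_*=-\id$ on $H_2(\Sigma(\widehat F))$ (the covering involution acts as $-1$ on the homology of a double branched cover of $B^4$), we get $\widetilde\rho_*(\widetilde c)=\pm\widetilde{\rho_*(c)}$ with the sign determined by whether $\widetilde\rho$ or $\tau\widetilde\rho$ is the ``geometric'' lift of the surface construction. I would pin down this sign by a local computation near the fixed-point data: when $h\subset F$, the arc $h$ lies on $F$ and the preferred lift fixes $\widetilde h$, which forces the surface-lifting convention to pick up the extra $\tau$; when $h\not\subset F$, no such correction occurs. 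This can be made precise by examining the action on a standard generator, e.g.\ a small loop meeting $h$.

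The main obstacle I expect is precisely the careful bookkeeping of this $\pm$ sign: identifying which of the two lifts is ``the preferred one'' relative to the surface-lifting identification, and checking that the sign is globally constant (independent of the class $c$). Since $\Sigma(\widehat F)$ is connected and the two candidate answers $\rho_*$ and $-\rho_*$ differ by $\tau_*=-\id$, it suffices to verify the sign on a single nonzero class, so the difficulty is really a matter of choosing a convenient test class and a clean model of $\Sigma(\widehat F)$ near the branch locus along $h$. Once the sign is fixed, the fact that $\widetilde\rho_*$ is an isometry of $Q$ follows because $\widetilde\rho$ is a homeomorphism, and the statement follows immediately. (This is essentially Proposition 13 of \cite{boyle2021equivariant}, so in the paper it would be stated with a reference rather than reproved in full; the sketch above indicates how one would check it.)
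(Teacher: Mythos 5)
The paper offers no proof of this statement at all: Proposition \ref{lattice_isom_equiv} is imported verbatim from Boyle--Issa (\cite[Proposition 13]{boyle2021equivariant}) and used as a black box, so there is no in-paper argument to compare yours against. Judged on its own terms, your outline is the right one and matches the standard route: identify $(H_1(F),\G_F)$ with $(H_2(\Sigma(\widehat F)),Q)$ via the Kauffman--Taylor/Gordon--Litherland description of $\Sigma(\widehat F)$ as two $4$-balls glued along an $I$-bundle over $F$, note that the two lifts of $\rho$ differ by the deck transformation $\tau$, and use that $\tau_*=-\id$ on $H_2(\Sigma(\widehat F))$ (correct, by the transfer argument since $H_2(B^4;\Q)=0$) to conclude that the preferred lift acts as $\pm\rho_*$ with a single global sign. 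Your observation that it then suffices to evaluate the sign on one nonzero class is also correct.

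The gap is that you never actually evaluate that sign. The entire content of the dichotomy in the statement --- $+\rho_*$ when $h\not\subset F$ versus $-\rho_*$ when $h\subset F$ --- is the determination of whether the lift fixing $\widetilde h$ pointwise is the sheet-preserving or the sheet-swapping lift in each case, and your proposal defers this to ``a local computation near the fixed-point data'' without carrying it out or even giving a heuristic for why $h\subset F$ should force the extra $\tau$ rather than the other way around. As written, the argument would equally well ``prove'' the proposition with the two cases interchanged. To close this you would need to exhibit, in the two-sheet model, how $\pi^{-1}(h)$ sits relative to the gluing region $F\times[-1,1]$ when $h\subset F$ (it lies over the $I$-bundle, so the lift fixing it must exchange the two copies of $B^4$) versus when $h\not\subset F$ (it runs through both sheets away from $F$, and the sheet-preserving lift fixes it). Since the paper only cites the result, leaving this to the reference is defensible, but as a standalone proof the proposal is incomplete precisely at the step that distinguishes the two bullet points.
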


\begin{thm}\label{equivalent_definition}
The equivariant signature introduced in Definition \ref{equiv_sign_witt} coincides with the one given in Definition \ref{equiv_signature}.
\end{thm}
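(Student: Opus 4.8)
The plan is to reduce both definitions of $\widetilde\sigma$ to the same computation on the Gordon-Litherland lattice of an invariant spanning surface, and then read off equality. First I would fix a directed strongly invertible knot $(K,\rho,h)$ and choose an invariant Seifert surface $F\subset S^3$ of type $0$ for $K$, as guaranteed by Proposition~\ref{stab_surface}. Equivariantly pushing the interior of $F$ into $B^4$ yields an equivariant spanning surface $\widehat F$ for $K$, to which Lemma~\ref{lift} applies, giving the two lifts $\widetilde\rho,\tau\widetilde\rho$ of $\rho$ to $\Sigma(\widehat F)$; I choose the preferred lift as in Proposition~\ref{lattice_isom_equiv} and Remark~\ref{fixed_pt_lift}. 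Since $F$ has type $0$ and $h\subset F$, Proposition~\ref{lattice_isom_equiv} identifies the lattice automorphism $\widetilde\rho_*$ on $(H_2(\Sigma(\widehat F)),Q)$ with $-\rho_*$ on $(H_1(F),\G_F)$. This is the bridge between the topological $g$-signature side and the algebraic equivariant Witt side.

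Next I would evaluate the right-hand side of Definition~\ref{equiv_signature}. By the $G$-signature theorem (as used in \cite{alfieri2021strongly}), $\sigma(\Sigma(\widehat F),\widetilde\rho)$ decomposes as the ordinary signature of $Q$ on the $(+1)$-eigenspace of $\widetilde\rho_*$ minus (or plus, according to a sign bookkeeping) the signature on the $(-1)$-eigenspace, up to a defect term controlled by the fixed-point data $\Delta$ and $\widetilde\gamma$; the subtraction of $e(\Delta,\widetilde\gamma)$ in Definition~\ref{equiv_signature} is precisely designed to cancel that defect, leaving exactly the difference of eigenspace signatures of $Q$. Translating through the identification $\widetilde\rho_*\leftrightarrow-\rho_*$, the $(+1)$-eigenspace of $\widetilde\rho_*$ corresponds to the $(-1)$-eigenspace $E_{-1}$ of $\rho_*$ on $H_1(F,\Q)$ and vice versa. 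Hence Alfieri-Boyle's invariant equals $\sigma(Q|_{E_{-1}})-\sigma(Q|_{E_{1}})$, where $Q$ is identified with $\G_F$.

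On the other side, $\Phi_W(K)$ is by definition the Witt class of $(\G_F,\rho_*)$, and Definition~\ref{equiv_sign_witt} sets $\widetilde\sigma(K)=\sigma\circ\Phi_W^-(K)-\sigma\circ\Phi_W^+(K)$, i.e.\ $\sigma(\G_F|_{E_{-1}})-\sigma(\G_F|_{E_{1}})$, using the eigenspace decomposition $(\G_F,\rho_*)=(\G_F|_{E_1},\id)\oplus(\G_F|_{E_{-1}},-\id)$. Comparing the two expressions, and recalling that $\Phi_W$ is computed from a \emph{type $0$} invariant surface so that the Proposition~\ref{GL_refined} correction term vanishes (consistently with $e(\overline F)=0$ in Lemma~\ref{euler_quotient}), the two quantities are literally the same integer.

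The main obstacle I expect is the careful sign and defect-term bookkeeping in the middle step: matching the $G$-signature of $(\Sigma(\widehat F),\widetilde\rho)$, minus the relative Euler number $e(\Delta,\widetilde\gamma)$, with the naive eigenspace-signature difference of $Q$. One must verify that the fixed surface $\Delta$ (with $\partial\Delta=\widetilde\gamma$) contributes exactly the term that the $G$-signature theorem's normal-bundle correction produces, so that after subtraction only the eigenspace signatures of the $2$-handle intersection form survive; this is where Remark~\ref{fixed_pt_lift}'s description of $\Fix(\widetilde\rho)$ and the computation of $e(\Delta,\widetilde\gamma)$ in \cite[Section~6]{alfieri2021strongly} are used. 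Once that identity is in hand, together with Proposition~\ref{lattice_isom_equiv} and the fact that type $0$ kills the Gordon-Litherland correction, the equality of the two definitions follows.
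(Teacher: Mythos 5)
Your overall strategy matches the paper's: choose a type-$0$ invariant Seifert surface $F$, radially push its interior into $B^4$, and use Proposition~\ref{lattice_isom_equiv} to identify $(H_2(\Sigma(\widehat F)),Q,\widetilde\rho_*)$ with $(H_1(F),\G_F,-\rho_*)$, so that both candidate signatures are read off the same lattice; the eigenspace bookkeeping at the end is also correct. The gap is in your treatment of $e(\Delta,\widetilde\gamma)$. You assert that the $G$-signature theorem expresses $\sigma(\Sigma(\widehat F),\widetilde\rho)$ as the eigenspace signature difference ``up to a defect term'' and that subtracting $e(\Delta,\widetilde\gamma)$ is ``designed to cancel that defect.'' This is a misconception: the $g$-signature of the pair $(\Sigma(\widehat F),\widetilde\rho)$ is \emph{by definition} the difference of the signatures of $Q$ on the $(\pm1)$-eigenspaces of $\widetilde\rho_*$; the $G$-signature theorem is only invoked by Alfieri--Boyle to prove that the corrected quantity is a concordance invariant, not to define it. Hence, after the lattice identification, the $g$-signature already equals the quantity from Definition~\ref{equiv_sign_witt}, and what remains to prove is precisely that $e(\Delta,\widetilde\gamma)=0$ for the pushed-in surface. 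Your argument never establishes this; moreover, if the ``cancellation'' reasoning were valid it would apply verbatim to an arbitrary invariant surface in $B^4$ and show the $e$-correction is always invisible, which is false and would defeat the purpose of including it in Definition~\ref{equiv_signature}. You flag this step as the main obstacle but leave it unresolved.

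The missing step has a short direct proof, which is the actual content of the paper's argument. Take $\gamma$ a parallel copy of $h$ on $F$; since cutting $F$ along $h$ yields an equivariant Seifert surface for $\widehat{L}_b^0(K)$, the lift $\widetilde\gamma$ is the canonical longitude of $\widetilde h$. The traces $D$ and $D'$ of $h$ and $\gamma$ under the radial isotopy pushing the interior of $F$ into $B^4$ are disjoint; by Remark~\ref{fixed_pt_lift} the fixed surface of $\widetilde\rho$ is the lift $\Delta$ of $D$, and the lift $\Delta'$ of $D'$ is a perturbation of $\Delta$ with $\partial\Delta'=\widetilde\gamma$ disjoint from $\Delta$, whence $e(\Delta,\widetilde\gamma)=\#(\Delta\cap\Delta')=0$. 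With this inserted in place of your cancellation claim, your proof closes and coincides with the paper's.
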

\begin{proof}
Let $F$ be an invariant Seifert surface of type $0$ for a directed strongly invertible knot $(K,\rho,h)$. According to Definition \ref{equiv_sign_witt}, $\widetilde{\sigma}(K)$ is the equivariant signature of $(H_1(F),\G_F,-\rho_*)$. By Lemma \ref{lift} and Proposition \ref{lattice_isom_equiv} this quantity coincides with the $g$-signature of the pair $(\Sigma(\widehat{F}),\widetilde{\rho})$, where $\Sigma(\widehat{F})$ is the $2$-fold cyclic cover branched over a copy $\widehat{F}$ of $F$ radially pushed into $B^4$ and $\widetilde{\rho}$ is the preferred lift of the radial extension of $\rho$ to $B^4$. Hence, it is sufficient to prove that the relative Euler number vanishes.
Let $\gamma$ be a parallel copy of $h$ on $F$. Since cutting $F$ along $h$ produces an equivariant Seifert surface for $\widehat{L}_b^0(K)$, the lift $\widetilde{\gamma}$ of $\gamma$ in $\Sigma(\widehat{F})$ is the canonical longitude of $\widetilde{h}$. 
Let $D,D'$ be the traces of $h$ and $\gamma$ respectively along the radial isotopy that pushes the interior of $F$ in $B^4$.
Since $\widehat{F}$ is obtained from $F\subset S^3$, one can see that $\Fix(\rho_{\widehat{F}})$ consists solely on an arc joining the fixed points of $K$.

Then by Remark \ref{fixed_pt_lift} the fixed point set of $\widetilde{\rho}$ consists of the lift $\Delta$ of $D$. The lift $\Delta'$ of $D'$ is a perturbation of $\Delta$ such that $\partial \Delta'=\widetilde{\gamma}$, and since they are disjoint we have $$e(\Delta,\widetilde{\gamma})=\#(\Delta\cap\Delta')=0$$
i.e. the relative Euler number vanishes.
\end{proof}

\begin{remark}\label{remark:direction}
As a consequence of Theorem \ref{equivalent_definition} and Corollary \ref{qb_phi} we obtain the following formula
$$\widetilde{\sigma}(K)=\sigma(K)-2\sigma(\mathfrak{qb}(K))$$
for the equivariant signature of a directed strongly invertible knot $K$ in terms of classical signatures.
\end{remark}

\begin{remark}\label{equiv_signature_direction}

As shown by Alfieri and Boyle \cite[Proposition 7.3]{alfieri2021strongly}, the equivariant signature depends on the choice of the half-axis for a strongly invertible knot. For example, they show in the proof of Proposition 7.2 that $\widetilde{\sigma}(7_4b^+\widetilde{\#}m7_4b^-)\neq 0$.

Since the equivariant algebraic concordance homomorphism $\Psi:\C\longrightarrow\AC$ defined in \cite{miller_powell} does not distinguish the choice of half-axis, we get that $7_4b^+\widetilde{\#}m7_4b^-$ has trivial image in $\AC$.

On the other hand, since $\widetilde{\sigma}$ factors through $\widetilde{\G}_r^\Z$, we get that the image of $7_4b^+\widetilde{\#}m7_4b^-$ is non trivial in $\widetilde{\G}_r^\Z$.

\end{remark}

\section{The structure of the equivariant algebraic concordance group}\label{sect:grp_str}
In this section, we explore the structure of the reduced equivariant algebraic concordance group $\widetilde{\G}_r^{\Z}$. In order to do so, we introduce the notion of \emph{symmetric structure} and we describe the equivalence between equivariant Seifert forms over $\Q$ and symmetric structures.

\begin{lemma}\label{lemma:inclusion}
The natural homomorphism $\widetilde{\G}_r^\Z\longrightarrow\widetilde{\G}_r^\Q$ given by the extension of coefficients is injective.
\end{lemma}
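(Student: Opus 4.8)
The plan is to show injectivity by proving that the extension of coefficients $\widetilde{\G}_r^\Z \to \widetilde{\G}_r^\Q$ has trivial kernel, i.e.~that an equivariant Seifert form $(\theta,\rho)$ over $\Z$ which becomes equivariantly metabolic over $\Q$ already admits a $\rho$-invariant metabolizer over $\Z$. So suppose $(\theta,\rho)$ is defined on a free $\Z$-module $M$ of rank $2m$ and that there is a half-dimensional $\rho\otimes\mathbb{Q}$-invariant subspace $W \subset M\otimes\Q$ on which $\theta\otimes\Q$ vanishes. The natural candidate for a metabolizer over $\Z$ is the ``saturation'' $H := (W\cap M)^{\mathrm{sat}} = \{x \in M \mid \exists\, n\in\Z,\ n\neq 0,\ nx \in W\}$ — equivalently, the preimage in $M$ of the torsion submodule of $M/(W\cap M)$. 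This is exactly the construction used in the proof of Proposition~\ref{alg_slice_link}, so I would reuse that idea.

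The key steps, in order, are: (1) $H$ is a direct summand of $M$ (the quotient $M/H$ is torsion-free by construction), hence $H$ is a free $\Z$-module and it is a primitive sublattice; (2) $\rank_\Z H = \dim_\Q W = m$, since tensoring the inclusion $H\hookrightarrow M$ with $\Q$ recovers $W \hookrightarrow M\otimes\Q$; (3) $H$ is $\rho$-invariant, because $\rho$ preserves $M$ and $\rho\otimes\Q$ preserves $W$, so for $x\in H$ with $nx\in W$ we get $n\rho(x) = \rho(nx) \in W$, hence $\rho(x)\in H$; (4) $\theta$ vanishes on $H\times H$: given $x,y\in H$ there is a nonzero integer $n$ with $nx, ny \in W$, so $n^2\theta(x,y) = \theta(nx,ny) = 0$ in $\Z$, whence $\theta(x,y)=0$. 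Together with $\rank M = 2\rank H$ this shows $H$ is an equivariant metabolizer for $(\theta,\rho)$ over $\Z$. Therefore $[(\theta,\rho)] = 0$ in $\widetilde{\G}_r^\Z$, proving injectivity.

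The only subtle point — and the main thing to get right — is step (1), namely that $H$ as defined is genuinely a \emph{summand} rather than merely a submodule, so that the unimodularity hypothesis on $\theta-\theta^t$ and the parity condition $\rank M = 2\rank H$ are respected; this follows from the standard fact that for a subgroup $N$ of a finitely generated free abelian group $M$, the preimage of $\Tors(M/N)$ is a pure (primitive) subgroup and hence a direct summand. One should also remark explicitly, as in the cited proof of the algebraic-complexity proposition, that passing from $\Z$ to $\Q$ does not change whether a half-dimensional invariant isotropic subspace exists, which is precisely the content just established. No compatibility with the extra data $h, \widetilde{\lk}$ is needed here since $\widetilde{\G}_r$ involves only the pair $(\theta,\rho)$.
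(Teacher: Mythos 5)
Your proof is correct and is essentially the paper's own argument: since $W$ is a $\Q$-subspace, your saturated submodule $H$ coincides with $W\cap M$, which is exactly the metabolizer the paper takes. (Note also that Definition~\ref{reduced} only requires a half-rank submodule, not a direct summand, so your ``subtle point'' (1) is harmless but not actually needed.)
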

\begin{proof}
Let $(\theta,\rho)$ be an equivariant Seifert form over a free $\Z$-module $M$, and denote by $(\theta_\Q,\rho_\Q)$ its extension over $M\otimes\Q$. Suppose $H\subset M\otimes\Q$ is a $\rho_\Q$-invariant metabolizer for $\theta_\Q$ and let $H_\Z=H\cap M$. Then clearly $H_\Z$ is a $\rho$-invariant metabolizer for $\theta$.
\end{proof}

The lemma above ensures that no information is lost by considering rational coefficients instead of integral ones. We focus now on determining the group structure of $\widetilde{\G}_r^\Q$. In the following, we will implicitly consider the equivariant Seifert forms to be over $\Q$.

\begin{defn}
A \emph{symmetric structure} is a triple $(V,\beta,S)$, where
\begin{itemize}
    \item $V$ is a $\Q$-vector space,
    \item $\beta$ is a bilinear, symmetric and non-degenerate form on $V$,
    \item $S:V\longrightarrow V$ is a linear isomorphism which is self-adjoint with respect to $\beta$.
\end{itemize}

We say that $(V,\beta,S)$ is \emph{metabolic} if $\dim V$ is even and there exists a $S$-invariant half-dimensional subspace $W\subset V$ on which $\beta$ is identically zero.

We define the \emph{orthogonal sum} of two symmetric structures $\mathcal{\Sy}_i=(V_i,\beta_i,S_i)$, $i=1,2$ as
$$
\mathcal{\Sy}_1\oplus\mathcal{\Sy}_2=(V_1\oplus V_2,\beta_1\oplus\beta_2,S_1\oplus S_2).
$$

We say that $\Sy_1$ and $\Sy_2$ are \emph{concordant} if $-\Sy_1\oplus\Sy_2$ is metabolic, where $-\Sy_1=(V_1,-\beta_1,S_1)$.
\end{defn}

\begin{defn}
We define the group $\G_{sym}$ of symmetric structures as the quotient of the set of symmetric structures up to concordance, endowed with the operation of orthogonal sum.
\end{defn}

Let $(V,\theta,\rho)$ be an equivariant Seifert form. Let $T$ be the endomorphism of $V$ given by the composition
\begin{center}
    \begin{tikzcd}  V\ar[r,"\theta-\theta^t"]&V^*\ar[r,"(\theta+\theta^t)^{-1}"]&V,
    \end{tikzcd}
\end{center}
and let $\beta$ be the bilinear form $\theta+\theta^t$.
It is easy to check that the following facts hold:
\begin{itemize}
    \item $T$ is anti-self-adjoint with respect to $\beta$,
    \item $\rho\circ T+T\circ\rho=0$,
    \item $\theta(x,y)=\frac{1}{2}\left(\beta(x,y)+\beta(Tx,y)\right)$.
\end{itemize}
In particular, the last property implies that any subspace $H\subset V$ is a $\rho$-invariant metabolizer for $\theta$ if and only if is a $\langle T,\rho\rangle$-invariant metabolizer for $\beta$.

Denote now by $V_{\pm}$ the eigenspace of $\rho$ relative to $\pm1$.
Since $\rho\circ T+T\circ\rho=0$, we have that $T(V_\pm)=V_\mp$, and that $V_\pm$ is $T^2$-invariant.

Suppose now $H\subset V$ is a $\langle T,\rho\rangle$-invariant metabolizer for $\beta$, and let $H_{\pm}=H\cap V_{\pm}$.
Since $\rho$ is a $\beta$-isometry, $V_+$ and $V_-$ are $\beta$-orthogonal and hence $H_{\pm}$ is a metabolizer for $\beta_{|V_{\pm}}$.
Since $H$ is $T$-invariant and $T$ and $\rho$ anticommute, we easily deduce that $T(H_{\pm})=H_{\mp}$ and hence that $H_\pm$ is $T^2$-invariant.
Viceversa, let $H_+\subset V_+$ be a $T^2$-invariant metabolizer for $\beta_{|V_+}$. Since $T$ is anti-self-adjoint with respect to $\beta$, it is immediate to see that $H=H_+\oplus T(H_+)$ is a $\langle T,\rho\rangle$-invariant metabolizer for $\beta$.

\begin{thm}
Using the notation above, we have that the following map from equivariant Seifert forms to symmetric structures
$$
(V,\theta,\rho)\longmapsto(V_+,\beta_{|V_+}, T^2_{|V_+})
$$
induces an isomorphism between the equivariant algebraic concordance group $\widetilde{\G}_r^\Q$ and the group of symmetric structures $\G_{sym}$.
\end{thm}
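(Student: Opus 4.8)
The plan is to show that the map $(V,\theta,\rho)\mapsto(V_+,\beta_{|V_+},T^2_{|V_+})$ is a well-defined group homomorphism and then construct an inverse. First I would check well-definedness at the level of forms: we already know from the discussion preceding the theorem that $T^2$ preserves $V_+$ and that $T^2_{|V_+}$ is self-adjoint with respect to $\beta_{|V_+}$ (since $T$ is anti-self-adjoint, $T^2$ is self-adjoint), and that $\beta_{|V_+}$ is symmetric and nondegenerate because $V_+$ and $V_-$ are $\beta$-orthogonal and $\beta$ is nondegenerate. One must also verify $T^2_{|V_+}$ is an isomorphism, which follows from $T$ being an isomorphism (as $\theta-\theta^t$ is unimodular and $\theta+\theta^t$ nondegenerate). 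So $(V_+,\beta_{|V_+},T^2_{|V_+})$ is genuinely a symmetric structure.

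Next I would verify that orthogonal sums are respected: for $(V,\theta,\rho)=(V_1,\theta_1,\rho_1)\oplus(V_2,\theta_2,\rho_2)$ one has $V_+=(V_1)_+\oplus(V_2)_+$, $\beta=\beta_1\oplus\beta_2$ and $T=T_1\oplus T_2$ (the operators $\theta\pm\theta^t$ and $(\theta+\theta^t)^{-1}$ are block diagonal), hence $T^2_{|V_+}=(T_1^2)_{|(V_1)_+}\oplus(T_2^2)_{|(V_2)_+}$, giving the orthogonal sum of symmetric structures. Then I would show the map sends metabolic equivariant Seifert forms to metabolic symmetric structures, and conversely, which is exactly the content of the paragraph just before the theorem: if $H$ is a $\rho$-invariant metabolizer for $\theta$, it is a $\langle T,\rho\rangle$-invariant metabolizer for $\beta$, so $H_+=H\cap V_+$ is a $T^2$-invariant metabolizer for $\beta_{|V_+}$ of half the dimension of $V_+$; conversely, given such an $H_+$, the subspace $H_+\oplus T(H_+)$ is a $\langle T,\rho\rangle$-invariant metabolizer for $\beta$, which by the third bulleted property ($\theta(x,y)=\tfrac12(\beta(x,y)+\beta(Tx,y))$) is a $\rho$-invariant metabolizer for $\theta$. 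Combined with the compatibility with orthogonal sums and with negation (note $-\theta^t$ produces $(-\beta,\ -T)$, hence the same $T^2$, matching $-\Sy=(V_+,-\beta_{|V_+},T^2_{|V_+})$), this shows the induced map $\widetilde{\G}_r^\Q\to\G_{sym}$ is a well-defined homomorphism, and that it is injective (a class mapping to $0$ is represented by a form whose associated symmetric structure is metabolic, hence the form is equivariantly metabolic).

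For surjectivity, given a symmetric structure $(W,\gamma,S)$ I would reverse the construction: set $V=W\oplus W$ with $\rho$ the swap involution, so $V_+$ is the diagonal copy of $W$; I need a form $\theta$ on $V$ with $\theta+\theta^t=\beta$ block-antidiagonal-plus-diagonal in a way that reproduces $\gamma$ on the diagonal and whose associated $T$ squares to $S$ on $V_+$. Concretely, writing an element of $V$ as a column $(x,y)$, one takes $\beta((x_1,y_1),(x_2,y_2))=\gamma(x_1,y_2)+\gamma(y_1,x_2)$ (so that $\rho$ is a $\beta$-isometry and $V_+$, the diagonal, carries $\gamma$ up to a factor $2$ — a harmless rescaling in the Witt/concordance group), and chooses $\theta$ so that $T=\beta^{-1}(\theta-\theta^t)$ interchanges the two summands via $S$ on one and the identity on the other; then $T^2$ acts as $S$ on the first summand. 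Checking that such a $\theta$ satisfies $\theta(\rho x,\rho y)=\theta^t(x,y)$ and that $\theta-\theta^t$ is unimodular is a direct matrix computation. I expect the main obstacle to be precisely this surjectivity step: one must produce an honest equivariant Seifert form (with $\theta-\theta^t$ unimodular, not merely nondegenerate over $\Q$ — though over $\Q$ these coincide) realizing a prescribed $(W,\gamma,S)$, and verify that different realizations are equivariantly concordant so the class is unambiguous; the eigenspace bookkeeping for $\rho$ and the anticommutation $\rho T+T\rho=0$ have to be threaded carefully through this construction.
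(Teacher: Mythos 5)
Your treatment of well-definedness, compatibility with orthogonal sums and negation, and the metabolic correspondence (hence injectivity) matches the paper, which likewise just invokes the discussion preceding the theorem. The genuine problem is your surjectivity construction. You take $V=W\oplus W$ with $\rho$ the swap, so that $V_+$ is the \emph{diagonal}, and then ask for $T$ to carry the first copy of $W$ to the second by the identity and the second to the first by $S$, concluding that ``$T^2$ acts as $S$ on the first summand.'' Two things go wrong. First, the first summand is not $\rho$-invariant under the swap, so $T^2$ on the first summand is not the datum the map extracts; you would need $T^2$ on the diagonal. Second, and more seriously, the operator you describe, $T(x,y)=(Sy,x)$, violates the identity $\rho\circ T+T\circ\rho=0$, which (as listed in the discussion you are relying on) holds automatically for the $T$ of \emph{any} equivariant Seifert form: one computes $\rho T(x,y)=(x,Sy)$ while $-T\rho(x,y)=(-Sx,-y)$, and these agree only if $S=-\id$. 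So no choice of $\theta$ equivariant for the swap can produce your $T$, and the anticommutation constraint you promised to ``thread carefully'' is exactly where the construction breaks. (A smaller issue: the ``harmless factor of $2$'' is not harmless over $\Q$, since $\langle 1\rangle\neq\langle 2\rangle$ in $W(\Q)$; this one is repaired by feeding $\gamma/2$ into the construction.)

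The paper avoids all of this by choosing the \emph{other} involution on $V\oplus V$, namely $\rho=\left(\begin{smallmatrix}\id&0\\0&-\id\end{smallmatrix}\right)$, so that $V_+$ \emph{is} the first summand, together with the explicit form
$$
\theta=\frac{1}{2}\begin{pmatrix}\beta&\beta\cdot S\\-\beta\cdot S&-\beta\cdot S\end{pmatrix}.
$$
Then $\theta+\theta^t=\beta\oplus(-\beta\cdot S)$ and $\theta-\theta^t=\left(\begin{smallmatrix}0&\beta S\\-\beta S&0\end{smallmatrix}\right)$, so $T=\left(\begin{smallmatrix}0&S\\\id&0\end{smallmatrix}\right)$; this $T$ is block-off-diagonal and therefore anticommutes with $\mathrm{diag}(\id,-\id)$ for free, and one gets $T^2_{|V_+}=S$ and $\beta_{|V_+}=\beta$ on the nose, with no rescaling. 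Your intuition that ``$T$ swaps the two summands via $S$ and $\id$'' is exactly the right picture --- it just has to be paired with $\rho=\mathrm{diag}(\id,-\id)$ rather than the swap involution, since it is $\rho$, not $T$, that must fix the first summand.
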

\begin{proof}

It is immediate to see that the map above is compatible with the orthogonal sum. From the discussion above, we have that $(V,\theta,\rho)$ is metabolic if and only if $(V_+,\beta_{|V_+}, T^2_{|V_+})$ is metabolic. Therefore, this map induces a well-defined and injective homomorphism between the two groups.
Viceversa, given a symmetric structure $(V,\beta,S)$, we can construct an equivariant Seifert form on $V\oplus V$, as follows:
$$
\theta=\frac{1}{2}\begin{pmatrix}
    \beta&\beta\cdot S\\
    -\beta\cdot S&-\beta\cdot S
\end{pmatrix},
$$
$$
\rho=\begin{pmatrix}
    \id&0\\
    0&-\id
\end{pmatrix}.
$$
It is easy to check that $(V\oplus V,\theta,\rho)$ has $(V,\beta,S)$ as associated symmetric structure.
\end{proof}

Let $\theta$ be a Seifert form on a space $V$. Recall that the Alexander polynomial of $\theta$ is defined as $\Delta_\theta(t)=\det(A-tA^t)$, where $A$ is a matrix representing $\theta$ with respect to some basis of $V$ (which is well defined up to squares in $\Q$).

\begin{defn}\label{def:poly_transf}
Let $p(t)\in\Q[t,t^{-1}]$ be a width $2d$ \emph{symmetric} polynomial (i.e. such that $p(t)=p(-t)$). We denote by $\delta(p)(s)\in \Q[s]$ the polynomial obtained by the following substitution
$$
\delta(p)(s)=\left(\left(1-\lambda^2\right)^d p\left(\frac{1+\lambda}{1-\lambda}\right)\right)_{|s=\lambda^2}.
$$
Observe that $\delta$ is multiplicative, meaning that $\delta(p\cdot q)=\delta(p)\cdot\delta(q)$. Moreover $\delta$ admits an inverse: given $q(s)\in\Q[s]$ a polynomial of degree $d$ we can define
$$
\delta^{-1}(q)(t)=\frac{(t+1)^{2d}}{(4t)^d}q\left(\left(\frac{t-1}{t+1}\right)^2\right),
$$
and it is not difficult to see that $\delta^{-1}$ is the inverse of $\delta$.
\end{defn}

\begin{remark}\label{rem:poly_transf}
Let $(V,\theta,\rho)$ be an equivariant Seifert form, with Alexander polynomial $\Delta_\theta(t)$. Let $(V_+,\beta,S)$ be the associated symmetric structure.
Then, from the discussion above, we have that up to units in $\Q$ and factors $(s-1)$, the characteristic polynomial of $S$ is given by $\delta(\Delta_\theta)(s)$.
In the following, given a directed strongly invertible knot, we will denote by $\delta_K(s)$ the polynomial obtained by the formula in Definition \ref{def:poly_transf} applied to the Alexander polynomial $\Delta_K(t)$ of $K$ (which is well defined if we require that $\Delta_K(t)=\Delta_K(t^{-1})$).
\end{remark}

Let $(V,\beta,S)$ be a symmetric structure and $p(s)\in\Q[s]$ be an irreducible polynomial. We define the $p$-component of $V$ as
$$V_p=\bigcup_{N>0}\ker(p(S)^N).$$
Clearly, $V_p$ is an $S$-invariant subspace of $V$.

\begin{lemma}\label{lemma:pq-orth}
Let $p$ and $q$ be distinct irreducible polynomials. Then $V_p$ and $V_q$ are orthogonal with respect to $\beta$.
\end{lemma}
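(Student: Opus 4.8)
The plan is to show that $\beta(V_p, V_q) = 0$ by a standard generalized-eigenspace argument, using only the fact that $S$ is $\beta$-self-adjoint together with the coprimality of $p$ and $q$. First I would fix the key ingredient: since $S$ is self-adjoint with respect to $\beta$, for any polynomial $f \in \Q[s]$ and any $x, y \in V$ we have $\beta(f(S)x, y) = \beta(x, f(S)y)$; this follows by linearity from the single-variable case $\beta(Sx,y) = \beta(x,Sy)$.

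Next I would do the core computation on a pair of generalized eigenvectors. Take $x \in V_p$ and $y \in V_q$. By definition there are exponents $M, N > 0$ with $p(S)^M x = 0$ and $q(S)^N y = 0$. Since $p$ and $q$ are distinct irreducibles, $p^M$ and $q^N$ are coprime in $\Q[s]$, so by Bézout there exist $a(s), b(s) \in \Q[s]$ with $a(s)p(s)^M + b(s)q(s)^N = 1$. Applying this identity to the operator $S$ and evaluating against $\beta$ gives
$$
\beta(x,y) = \beta\bigl((a(S)p(S)^M + b(S)q(S)^N)x,\, y\bigr) = \beta\bigl(b(S)q(S)^N x,\, y\bigr),
$$
where the first term vanished because $p(S)^M x = 0$. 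Now move $b(S)q(S)^N$ across using self-adjointness:
$$
\beta\bigl(b(S)q(S)^N x,\, y\bigr) = \beta\bigl(x,\, b(S)q(S)^N y\bigr) = 0,
$$
since $q(S)^N y = 0$. Hence $\beta(x,y) = 0$ for all $x \in V_p$, $y \in V_q$, which is the claim.

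I do not expect a genuine obstacle here — the statement is the usual orthogonality of distinct primary components under a self-adjoint operator, and everything reduces to the Bézout trick plus the adjointness relation. The only point requiring a word of care is ensuring the adjointness relation $\beta(f(S)x,y) = \beta(x,f(S)y)$ is invoked for polynomials (not just $S$ itself), which is immediate by linearity; and noting that $V_p, V_q$ are well-defined $S$-invariant subspaces, which was already recorded just before the lemma statement. So the proof will be three or four short lines.

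\begin{proof}
Since $S$ is self-adjoint with respect to $\beta$, we have $\beta(f(S)u, v) = \beta(u, f(S)v)$ for every $f \in \Q[s]$ and all $u, v \in V$, by linearity from the case $f(s) = s$. Let $x \in V_p$ and $y \in V_q$, and choose $M, N > 0$ with $p(S)^M x = 0$ and $q(S)^N y = 0$. As $p$ and $q$ are distinct irreducibles, $p^M$ and $q^N$ are coprime, so there exist $a, b \in \Q[s]$ with $a p^M + b q^N = 1$. Then
$$
\beta(x, y) = \beta\bigl((a(S)p(S)^M + b(S)q(S)^N)x,\, y\bigr) = \beta\bigl(b(S)q(S)^N x,\, y\bigr) = \beta\bigl(x,\, b(S)q(S)^N y\bigr) = 0,
$$
using $p(S)^M x = 0$ in the second equality, self-adjointness in the third, and $q(S)^N y = 0$ in the last. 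Hence $V_p$ and $V_q$ are $\beta$-orthogonal.
\end{proof}
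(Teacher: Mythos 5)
Your proof is correct and follows essentially the same route as the paper: both arguments exploit the coprimality of $p$ and $q$ together with the self-adjointness of $S$ to transfer an annihilating polynomial from one argument of $\beta$ to the other. The only cosmetic difference is that the paper asserts that $q(S)^N$ restricts to an isomorphism of $V_p$ and writes $v = q(S)^N v'$, whereas you make the underlying B\'ezout identity explicit; the mechanism is identical.
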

\begin{proof}
Let $N>0$ be big enough so that $V_q=\ker(q(S)^N)$. Since $p$ is irreducible and $p\neq q$, we have that the restriction of $q(S)^N$ is an isomorphism from $V_p$ to itself. Given now $v\in V_p$ and $w\in V_q$, there exists $v'\in V_p$ such that $v=q(S)^Nv'$. We compute now
$$
\beta(v,w)=\beta(q(S)^Nv',w)=\beta(v',q(S)^Nw)=\beta(v',0)=0,
$$
therefore $V_p$ and $V_q$ are orthogonal.
\end{proof}

In particular, the restriction of $\beta$ on $V_p$ is non-degenerate and hence $(V_p,\beta_{|V_p},S_{|V_p})$ is a symmetric structure.
The following theorem is an immediate consequence of the lemma above.

\begin{thm}\label{thm:p_decomp}
Let $\mathcal{F}\subset\Q[s]$ be the set of irreducible and monic polynomials different from $q(s)=s$. Then the group $\G_{sym}$ splits as
$$\G_{sym}=\bigoplus_{p\in\mathcal{F}}\G^p_{sym},$$ 
where $\G^p_{sym}$ is the subgroup of $\G_{sym}$ determined by symmetric structures with characteristic polynomial given by a power of $p(s)$.
In particular, the projection of a class $(V,\beta,S)\in\G_{sym}$ onto $\G^p_{sym}$ is given by $(V_p,\beta_{|V_p},S_{|V_p})$.
\end{thm}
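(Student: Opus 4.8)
The plan is to realize the claimed splitting already at the level of symmetric structures and then check that it descends to $\G_{sym}$. First I would recall that in a symmetric structure $(V,\beta,S)$ the operator $S$ is invertible, so $s$ does not divide the minimal polynomial of $S$; hence the primary decomposition of $V$ regarded as a $\Q[S]$-module is the \emph{finite} sum $V=\bigoplus_{p\in\mathcal F}V_p$, with $V_p$ (as defined before Lemma \ref{lemma:pq-orth}) nonzero only for the finitely many $p$ dividing the minimal polynomial of $S$. By Lemma \ref{lemma:pq-orth} these summands are pairwise $\beta$-orthogonal, so each $\beta_{|V_p}$ is non-degenerate and $(V_p,\beta_{|V_p},S_{|V_p})$ is a symmetric structure whose characteristic polynomial is a power of $p$. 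Thus $(V,\beta,S)=\bigoplus_{p\in\mathcal F}(V_p,\beta_{|V_p},S_{|V_p})$ as symmetric structures, an honest orthogonal direct sum rather than merely a concordance.

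Next I would verify that $(V,\beta,S)\longmapsto(V_p,\beta_{|V_p},S_{|V_p})$ induces a well-defined endomorphism $\Pi_p$ of $\G_{sym}$; the only point to check is that it carries metabolic structures to metabolic structures. Suppose $W\subset V$ is an $S$-invariant, half-dimensional, $\beta$-isotropic subspace. Being $S$-invariant, $W$ inherits the primary decomposition $W=\bigoplus_{p\in\mathcal F}(W\cap V_p)$, where $W\cap V_p$ is $S$-invariant and $\beta$-isotropic inside $V_p$; since $\beta_{|V_p}$ is non-degenerate we get $\dim(W\cap V_p)\le\tfrac12\dim V_p$. Summing over $p$ and using $\dim W=\tfrac12\dim V=\tfrac12\sum_p\dim V_p$ forces equality $\dim(W\cap V_p)=\tfrac12\dim V_p$ for every $p$, so each $W\cap V_p$ is a metabolizer for $(V_p,\beta_{|V_p},S_{|V_p})$, as required. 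I expect this dimension count to be the one genuinely non-formal step of the proof.

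Finally I would assemble the conclusion. By the first step $\sum_{p\in\mathcal F}\Pi_p=\mathrm{id}_{\G_{sym}}$ (a pointwise finite sum), and from the definition of $V_p$ together with Lemma \ref{lemma:pq-orth} one reads off $\Pi_p\circ\Pi_p=\Pi_p$ and $\Pi_p\circ\Pi_q=0$ for $p\ne q$. A family of commuting idempotents that are orthogonal and sum to the identity splits the group as $\G_{sym}=\bigoplus_{p\in\mathcal F}\im(\Pi_p)$. It remains to identify $\im(\Pi_p)$ with $\G^p_{sym}$: if a symmetric structure has characteristic polynomial a power of $p$ then $V=V_p$ by Cayley--Hamilton, so its class is fixed by $\Pi_p$; conversely every class in $\im(\Pi_p)$ is represented by some $(V_p,\beta_{|V_p},S_{|V_p})$, which has this property. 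Hence $\im(\Pi_p)=\G^p_{sym}$, the decomposition is the asserted one, and the projection of $(V,\beta,S)$ onto $\G^p_{sym}$ is exactly $\Pi_p[V,\beta,S]=[(V_p,\beta_{|V_p},S_{|V_p})]$. Apart from the dimension count in the second step, everything reduces to the primary decomposition of $\Q[S]$-modules and Lemma \ref{lemma:pq-orth}.
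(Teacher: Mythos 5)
Your argument is correct and follows the same route the paper has in mind: the paper states Theorem \ref{thm:p_decomp} as an ``immediate consequence'' of Lemma \ref{lemma:pq-orth}, and what you have written is precisely the expansion of that remark, namely the primary decomposition of $V$ as a $\Q[S]$-module (using that $S$ is invertible, so only $p\in\mathcal{F}$ occur) combined with the pairwise $\beta$-orthogonality of the $V_p$. Your dimension count showing that a metabolizer $W$ decomposes as $\bigoplus_p (W\cap V_p)$ with each piece half-dimensional in $V_p$ is exactly the one non-formal step hidden in the word ``immediate,'' and it is carried out correctly.
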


Therefore, it is sufficient to study the summands $\G_{sym}^p$ separately.

\begin{prop}\label{prop:exp_red}
Let $(V,\beta,S)\in\G_{sym}^p$. Then $(V,\beta,S)$ is concordant to $(\overline{V},\overline{\beta},\overline{S})$ so that $\overline{V}=\ker p(\overline{S})$.
\end{prop}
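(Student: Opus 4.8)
The plan is to reduce, within a concordance class, the ``size'' of the generalized eigenspace decomposition with respect to a fixed irreducible polynomial $p$, by peeling off obviously metabolic summands until only the $p$-torsion of exponent one survives. First I would set up the structure of $(V,\beta,S)\in\G_{sym}^p$: since the characteristic polynomial of $S$ is a power of $p$, the space $V$ is a finitely generated torsion module over the PID $\Q[s]/(p^N)$ for $N$ large, so $V\cong\bigoplus_i \Q[s]/(p^{e_i})$ as $\Q[s]$-modules, with $S$ acting by multiplication by $s$. The key observation is that the pairing $\beta$ together with the self-adjointness of $S$ makes this a \emph{symmetric linking form}–type situation, and the standard strategy is to show that any summand on which a power of $p$ strictly less than the top one acts trivially can be ``split off'' as something metabolic.

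The main steps, in order: (1) Filter $V$ by $V^{(j)}=\ker p(S)^j$, so $V^{(1)}\subset V^{(2)}\subset\cdots\subset V^{(N)}=V$, all $S$-invariant and (by self-adjointness of $S$) with $\beta$-orthogonality relations $\beta(V^{(j)}, p(S)^{j'}V)=0$ when $j\le j'$; in particular $p(S)^{N-1}V \subseteq (V^{(N-1)})^{\perp}$, and these are exactly the conditions that let one detect a metabolizer. (2) Suppose $e=\max_i e_i\geq 2$; pick a cyclic summand $\Q[s]/(p^e)$, i.e. a vector $v$ with $p(S)^e v = 0$ but $p(S)^{e-1}v\neq 0$. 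Consider the cyclic $S$-submodule $C$ generated by $v$. Either $\beta$ restricted to $C$ is degenerate — in which case its radical is an $S$-invariant subspace and one can quotient/split — or $\beta|_C$ is nondegenerate, and then $C$ is an orthogonal direct summand $(C,\beta|_C,S|_C)\oplus(C^\perp,\beta|_{C^\perp},S|_{C^\perp})$. (3) In the nondegenerate cyclic case, show directly that $(C,\beta|_C,S|_C)$, which is $\Q[s]/(p^e)$ with a nondegenerate self-adjoint linking pairing, is \emph{concordant} to a symmetric structure whose $S$ kills $p$ to the first power: the submodule $W = p(S)^{\lceil e/2\rceil}C$ is $S$-invariant, $\beta$ vanishes on it by the orthogonality relations, and comparing dimensions shows $-C\oplus C'$ is metabolic for a suitable lower-exponent $C'$ (this is the exact analogue of the ``a $p^e$ block is concordant to two $p^{\lceil e/2\rceil}$ blocks'' move in Levine's classification, run inside the self-adjoint/symmetric setting rather than the Seifert/isometric one). (4) Iterate: each application strictly decreases the total multiplicity of higher-exponent blocks, so after finitely many steps we reach $(\overline V,\overline\beta,\overline S)$ with $p(\overline S)=0$, i.e. $\overline V=\ker p(\overline S)$.

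The hard part will be step (3): carefully producing the concordance that lowers the exponent of a cyclic block. The cleanest route is to mimic Levine's argument: given $C\cong\Q[s]/(p^e)$ with nondegenerate $\beta$ and self-adjoint $S$, consider $C\oplus(-C)$ and exhibit an explicit $\langle S\oplus S\rangle$-invariant half-dimensional metabolizer refining the diagonal, then use it to trade $C$ for a structure supported on $\ker p^{\lceil e/2\rceil}$; one must check the metabolizer is genuinely $S$-invariant and isotropic, which is where the relation that $S$ is \emph{self-adjoint} (not an isometry) is used, together with the orthogonality of $V^{(j)}$ against $p(S)^{j}V$. A subtlety to watch: when $p(s)=s-1$ is excluded from $\mathcal F$ but other linear or higher-degree $p$ occur, $\Q[s]/(p)$ is a field extension of $\Q$ and the residue pairing lives there — this does not affect the reduction but does mean ``$\ker p(\overline S)$'' is a vector space over that field, which is exactly what the later structure theorems (Theorem \ref{thm:grp_str}) will exploit. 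Once step (3) is in place, steps (1)–(2) and (4) are bookkeeping with the filtration and orthogonality relations already recorded above.
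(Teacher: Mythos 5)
Your plan contains the right germ --- the isotropy of $\im p(S)^k$ for large $k$, which follows from self-adjointness via $\beta(p(S)^k v,p(S)^k w)=\beta(v,p(S)^{2k}w)=0$, and the idea of trading $(V,\beta,S)$ for a lower-exponent structure by a graph-type metabolizer --- but the decisive step is left open and the route to it is both longer and shakier than necessary. Concretely: (i) Step (2)'s orthogonal cyclic decomposition is not free. When $\beta|_C$ is degenerate you cannot simply ``quotient/split'': the radical $C\cap C^\perp$ is isotropic in $V$ but there is no reason it is half-dimensional in anything, and replacing $C$ by a better generator or absorbing the radical requires an argument you have not given. (ii) Step (3), which you correctly flag as the hard part, is where the proof actually lives, and the sketch you give for it is partly wrong: ``a $p^e$ block is concordant to two $p^{\lceil e/2\rceil}$ blocks'' fails for odd $e$ already on dimension grounds (the total length of $-C$ plus two such blocks is $e+2\lceil e/2\rceil=2e+1$, which is odd, so no metabolizer can be half-dimensional). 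A metabolizer ``refining the diagonal'' in $C\oplus(-C)$ only reproves that $C\oplus(-C)$ is metabolic; to trade $C$ for a lower-exponent $C'$ you must say what $C'$ is and exhibit an $S$-invariant isotropic half-dimensional subspace of $(-C)\oplus C'$, and neither is done.

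The paper's proof skips the cyclic decomposition entirely and performs one global move. With $N$ minimal such that $V=\ker p(S)^N$ and $N\ge 2$, set $W=\im p(S)^{N-1}$; the computation above gives $W\subseteq W^\perp$ (and nondegeneracy gives $W^\perp=\ker p(S)^{N-1}$, so $W$ is exactly the radical of $\beta|_{W^\perp}$). Hence $\overline{V}=W^\perp/W$ carries an induced nondegenerate symmetric form $\overline{\beta}$ and an induced self-adjoint $\overline{S}$ with $\overline{V}=\ker p(\overline{S})^{N-1}$, and the graph
$H=\{(w,[w])\in V\oplus\overline{V}\;:\;w\in W^\perp\}$
is an $(S\oplus\overline{S})$-invariant, half-dimensional, isotropic subspace for $(-\beta)\oplus\overline{\beta}$ (its dimension is $\dim W^\perp=\tfrac12(\dim V+\dim\overline{V})$ since $\dim\overline V=\dim W^\perp-\dim W$ and $\dim W+\dim W^\perp=\dim V$). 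This exhibits the concordance, and iterating lowers $N$ to $1$. If you want to salvage your blockwise strategy, the same subquotient $W^\perp/W$ with $W=p(S)^{\lceil e/2\rceil}C$ is the correct ``$C'$'' in your step (3); but you should then notice that the construction never used cyclicity, at which point steps (1)--(2) become superfluous.
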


\begin{proof}
Let $N\geq 0$ be the least integer so that $V=\ker(p(S)^N)$ and suppose $N\geq 2$. Let $W=\im p(S)^{N-1}$ and let $W^{\perp}$ be its orthogonal. Clearly, $W$ is invariant under $S$, and so is $W^\perp$.
Observe that for $v,w\in V$
$$
\beta(p(S)^{N-1}v,p(S)^{N-1}w)=\beta(v,p(S)^{2N-2}w)=\beta(v,0)=0,
$$
since $2N-2\geq N$ for $N\geq 2$. Therefore $W\subset W^\perp$, and by definition $W$ is the radical of $\beta_{|W^\perp}$. Hence $\beta$ induces a symmetric non-degenerate form $\overline{\beta}$ on the quotient $\overline{V}=W^\perp/W$. Similarly, since $W$ and $W^\perp$ are $S$-invariant, we have an induced map $\overline{S}$ on the quotient.

It is easy to check that $(\overline{V},\overline{\beta},\overline{S})$ is again a symmetric structure, and that by construction $\overline{V}=\ker(p(\overline{S})^{N-1})$

Consider $H=\{(w,[w])\in V\oplus \overline{V}\;|\;w\in W^\perp\}$. It is easy now to check that $H$ is a metabolizer for $(V,-\beta,S)\oplus(\overline{V},\overline{\beta},\overline{S})$, showing that $(V,\beta,S)$ and $(\overline{S},\overline{\beta},\overline{S})$ are concordant.
\end{proof}

\begin{thm}\label{thm:p_comp_witt}
Let $p(s)\in\Q[s]$ be an irreducible polynomial and consider the field $\F=\Q[s]/(p(s))$. Then, we have a natural isomorphism
$$\G^p_{sym}\longrightarrow W(\F).$$
\end{thm}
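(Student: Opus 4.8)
The plan is to use Proposition \ref{prop:exp_red} to reduce to representatives $(V,\beta,S)$ with $p(S)=0$ and then to recognize such a triple as precisely a non-degenerate symmetric bilinear space over the number field $\F=\Q[s]/(p(s))$, viewed over $\Q$ through the trace. First I would record the elementary observations that make this work: once $p(S)=0$, the $\Q$-vector space $V$ becomes an $\F$-module with $s$ acting as $S$; the $S$-invariant subspaces of $V$ are exactly its $\F$-subspaces; and self-adjointness of $S$ for $\beta$ upgrades to the statement that multiplication by every $\lambda\in\F$ is self-adjoint for $\beta$. Note also that an $\F$-subspace $W\subseteq V$ satisfies $\dim_\Q W=\tfrac12\dim_\Q V$ if and only if $\dim_\F W=\tfrac12\dim_\F V$, since $\dim_\Q=[\F:\Q]\cdot\dim_\F$.

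Next I would set up the trace refinement. Because $\Q$ is perfect the extension $\F/\Q$ is separable, so the pairing $(a,b)\mapsto\trace_{\F/\Q}(ab)$ on $\F$ is non-degenerate. Hence, given $(V,\beta,S)$ with $p(S)=0$, for each pair $x,y\in V$ there is a unique $b(x,y)\in\F$ with $\beta(\lambda x,y)=\trace_{\F/\Q}(\lambda\,b(x,y))$ for all $\lambda\in\F$. A routine verification --- using only symmetry of $\beta$, self-adjointness of multiplication by elements of $\F$, and non-degeneracy of the trace pairing --- shows that $b$ is $\F$-bilinear, symmetric and non-degenerate. Conversely, any non-degenerate symmetric $\F$-bilinear form $b$ on an $\F$-space $U$ produces the symmetric structure $(U,\ \trace_{\F/\Q}\circ b,\ S)$ with $S$ multiplication by $s$; this satisfies $p(S)=0$, lies in $\G^p_{sym}$ (its characteristic polynomial is a power of $p$), and the two constructions are mutually inverse and additive for $\oplus$.

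It remains to check that the induced map $[(V,\beta,S)]\mapsto[(V,b)]\in W(\F)$ is a well-defined group isomorphism, and the one substantive point here is that the two notions of metabolizer coincide. Concretely, for $(V,\beta,S)$ with $p(S)=0$ and a subspace $W\subseteq V$, the conditions ``$W$ is $S$-invariant, $\beta|_{W\times W}\equiv0$, $\dim_\Q W=\tfrac12\dim_\Q V$'' and ``$W$ is an $\F$-subspace, $b|_{W\times W}\equiv0$, $\dim_\F W=\tfrac12\dim_\F V$'' are equivalent, the equivalence $\beta|_W\equiv0\iff b|_W\equiv0$ again coming from non-degeneracy of the trace pairing. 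Applying this to orthogonal sums $(V_1,-\beta_1,S_1)\oplus(V_2,\beta_2,S_2)$ and their $\F$-counterparts $(V_1,-b_1)\oplus(V_2,b_2)$ shows at once that concordant $p(S)=0$ representatives have Witt-equal $\F$-forms (so the map descends to $\G^p_{sym}$ and is well defined) and, reading the equivalence in the other direction together with ``Witt-trivial $\Leftrightarrow$ metabolic'' over $\F$, that the map is injective. Surjectivity is the backward construction of the previous paragraph, and compatibility with $\oplus$ is immediate; hence $\G^p_{sym}\cong W(\F)$.

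The hard part --- really the only non-formal ingredient --- is this dictionary between metabolic symmetric structures with $p(S)=0$ and metabolic symmetric $\F$-forms: once $S$-invariant $\beta$-metabolizers and $\F$-linear $b$-metabolizers are identified, well-definedness, injectivity and surjectivity all come out of the same computation. Everything else is bookkeeping, with the standing hypothesis of characteristic $\neq2$ (automatic over $\Q$) used only to know that the zero class of $W(\F)$ consists exactly of the metabolic forms.
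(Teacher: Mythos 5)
Your proposal is correct and follows essentially the same route as the paper: reduce to $V=\ker p(S)$ via Proposition \ref{prop:exp_red}, view $V$ as an $\F$-vector space, and use the non-degeneracy of the trace pairing to lift $\beta$ to a symmetric $\F$-bilinear form whose metabolizers correspond exactly to the $S$-invariant metabolizers of $\beta$. Your write-up is somewhat more explicit than the paper's about the inverse construction and the descent to concordance classes, but the substance is identical.
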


\begin{proof}
Take $(V,\beta,S)\in\G^p_{sym}$.
By Proposition \ref{prop:exp_red} we can suppose that $V=\ker(p(S))$.
Therefore $V$ is naturally a $\F$-vector space.
Denote by $\tr:\F\longrightarrow\Q$ the trace map.
Since the symmetric $\Q$-bilinear form
$$
\F\times\F\longrightarrow\Q
$$
$$
(x,y)\longmapsto\tr(x\cdot y)
$$
is non degenerate, for every $u,v\in V$ there exists a unique $e\in\F$ such that for all $f\in\F$
$$
\beta(f\cdot u,v)=\tr(e\cdot f).
$$
This correspondence defines a symmetric $\F$-bilinear form $[-,-]_\beta$ on $V$ such that
\begin{center}
\begin{tikzcd}
    &\F\ar[d,"\tr"]\\
    V\times V\ar[ur,"{[-,-]}_\beta"]\ar[r,"\beta",swap]&\Q
\end{tikzcd}
\end{center}
is a commutative diagram.

Now let $H$ be a $\Q$-subspace of $V$. Then $H$ is $S$-invariant if and only if is a $\F$-subspace. 
Moreover, it is easy to see that $\beta$ vanishes identically on a $\F$-subspace $H$ if and only if $[-,-]_\beta$ does.
Therefore, the homomorphism induced by this correspondence is an isomorphism.
\end{proof}

We can now summarize Theorems \ref{thm:p_decomp} and \ref{thm:p_comp_witt} in the following theorem on the structure of $\widetilde{\G}_r^\Q$.
\begin{thm}\label{thm:grp_str}
The reduced equivariant algebraic concordance group $\widetilde{\G}_r^\Q$ is isomorphic to
$$
\widetilde{\G}_r^\Q\cong\bigoplus_{\substack{p(s)\neq s\\\text{p irreducible}}}W(\Q[s]/(p(s))).
$$
\end{thm}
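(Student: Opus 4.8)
The plan is simply to concatenate the three structural results already established in this section. Since it suffices to compute $\G_{sym}$ — the assignment $(V,\theta,\rho)\mapsto(V_+,\beta_{|V_+},T^2_{|V_+})$ having been shown to induce an isomorphism $\widetilde{\G}_r^\Q\cong\G_{sym}$ — I would start there. Theorem \ref{thm:p_decomp} then supplies the primary decomposition $\G_{sym}=\bigoplus_{p\in\mathcal{F}}\G^p_{sym}$, where $\mathcal{F}$ is the set of monic irreducibles in $\Q[s]$ other than $s$. I would stress that this is a genuine internal direct sum: each symmetric structure is finite-dimensional so its $p$-decomposition has only finitely many nonzero components, and Lemma \ref{lemma:pq-orth} together with the observation that an $\langle T,\rho\rangle$-invariant metabolizer splits along $V=\bigoplus_p V_p$ shows that projection onto each $\G^p_{sym}$ is a well-defined homomorphism. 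Restricting to $p\neq s$ costs nothing, since $S$ is by hypothesis an isomorphism and hence $V_s=\bigcup_N\ker(S^N)=0$ for every symmetric structure.

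Next, for a fixed $p\in\mathcal{F}$, Theorem \ref{thm:p_comp_witt} identifies $\G^p_{sym}$ with the Witt group $W(\F_p)$ of the number field $\F_p=\Q[s]/(p(s))$: one uses Proposition \ref{prop:exp_red} to reduce to symmetric structures with $V=\ker p(S)$, which are exactly $\F_p$-vector spaces, and then the trace pairing on $\F_p$ converts $\beta$ into a nondegenerate symmetric $\F_p$-bilinear form $[-,-]_\beta$, under which $S$-invariant subspaces correspond bijectively to $\F_p$-subspaces and metabolizers to metabolizers. Splicing these identifications summand by summand yields
$$
\widetilde{\G}_r^\Q\;\cong\;\G_{sym}\;\cong\;\bigoplus_{p\in\mathcal{F}}\G^p_{sym}\;\cong\;\bigoplus_{\substack{p(s)\neq s\\ p\text{ irreducible}}}W\bigl(\Q[s]/(p(s))\bigr),
$$
which is the claimed isomorphism; one could then invoke the classical computation of Witt groups of number fields to obtain the more explicit form stated in the introduction.

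Honestly, at this stage there is no substantial obstacle left: the theorem is a bookkeeping corollary of Theorems \ref{thm:p_decomp} and \ref{thm:p_comp_witt}, with the genuine work residing in those statements (and in Proposition \ref{prop:exp_red}, Lemma \ref{lemma:pq-orth}, and the Seifert-form/symmetric-structure equivalence). The only point I would take care to write out explicitly is that the two isomorphisms $\G_{sym}\cong\bigoplus_p\G^p_{sym}$ and $\G^p_{sym}\cong W(\F_p)$ are compatible enough to be assembled into a single direct-sum isomorphism — i.e. that passing to $p$-components and then to the $\F_p$-valued trace form commute in the evident way — which is immediate from the constructions but deserves a sentence.
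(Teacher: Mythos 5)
Your proposal is correct and follows exactly the paper's route: the theorem is stated there as a direct summary of the isomorphism $\widetilde{\G}_r^\Q\cong\G_{sym}$, the primary decomposition of Theorem \ref{thm:p_decomp}, and the identification $\G^p_{sym}\cong W(\Q[s]/(p(s)))$ of Theorem \ref{thm:p_comp_witt}. The small points you flag (finitely many nonzero $p$-components, $V_s=0$ because $S$ is an isomorphism, compatibility of the identifications) are exactly the right things to make explicit.
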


Using classical results on the Witt groups of finite extension of $\Q$ (see \cite{milnor1973symmetric} for details), one can check that Theorem \ref{thm:grp_str} implies that
$$
\widetilde{\G}_r^\Q\cong\Z^\infty\oplus\Z/2\Z^\infty\oplus\Z/4\Z^\infty\oplus\Z/8\Z^\infty,
$$
where $G^\infty$ stands for the direct sum of countable many copies of $G$.

\begin{remark}\label{rem:non_surj}

It would be very interesting to know whether there exists any directed strongly invertible knot $K$ whose image has order $8$ in $\widetilde{\G}_r^\Q$. However, we would like to point out that several of the summands in Theorem \ref{thm:grp_str} have trivial intersection with $\widetilde{\G}_r^\Z$: it is not difficult to check that many polynomials do not appear as factors of $\delta_K(s)$ for any knot $K$.

For example, suppose that $p(s)=s-10$ appears as a factor of $\delta_K(s)=p(s)q(s)$ for some knot $K$ and some other polynomial $q$. Following Remark \ref{rem:poly_transf} we compute the Alexander polynomial of $K$ (up to units) as $\delta^{-1}(\delta_K(s))(t)=\delta^{-1}(p)(t)\cdot\delta^{-1}(q)(t)=\frac{1}{4}(-9t-22-9t^{-1})\cdot \delta^{-1}(q)(t)$. Therefore, $\Delta_K(t)$ would be divisible by the primitive polynomial $\Delta(t)=9t+22+9t^{-1}$. Since $\Delta(1)=40$, we get a contradiction with the fact that $\Delta_K(1)=\pm1$.
\end{remark}

We conclude this section with a consequence of Theorem \ref{thm:grp_str}, which can be interpreted as an \emph{equivariant Fox-Milnor condition}.

\begin{thm}\label{thm:fox_milnor}
Let $K$ be a strongly invertible knot and let $\Delta_K(t)$ be its Alexander polynomial, normalized so that $\Delta_K(t)=\Delta_K(t^{-1})$ and $\Delta_K(1)=1$. If $K$ is equivariantly slice then $\Delta_K(t)$ is a square.
\end{thm}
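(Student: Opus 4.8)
The plan is to route the obstruction through the rational reduced group $\widetilde{\G}_r^\Q\cong\G_{sym}$ of Section~\ref{sect:grp_str} and the substitution $\delta$ of Definition~\ref{def:poly_transf}. The decisive algebraic input will be that the characteristic polynomial of the self-adjoint operator of a \emph{metabolic} symmetric structure is automatically a perfect square; everything else is translation and keeping track of normalizations.

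First I would argue that equivariant sliceness produces a metabolic symmetric structure. If $K$ is equivariantly slice, pick an invariant Seifert surface $F$ of type $0$ for $K$, which exists by Proposition~\ref{stab_surface}. By Theorem~\ref{en_alg_conc} we have $\Phi(K)=0$ in $\widetilde{\G}^\Z$, hence $\Phi_r(K)=0$ in $\widetilde{\G}_r^\Z$, and by Lemma~\ref{lemma:inclusion} the $\Q$-extension of the equivariant Seifert form $(\theta_F,\rho_*)$ is trivial in $\widetilde{\G}_r^\Q$. Transporting this across the isomorphism $\widetilde{\G}_r^\Q\cong\G_{sym}$, the associated symmetric structure $(V_+,\beta,S)$, where $\beta=(\theta_F+\theta_F^t)_{|V_+}$ and $S=T^2_{|V_+}$ in the notation of Section~\ref{sect:grp_str}, represents $0$; unwinding the definition of concordance of symmetric structures with the second summand the zero structure, this forces $(V_+,\beta,S)$ to be metabolic.

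Next I would prove the algebraic lemma: a metabolic symmetric structure $(V,\beta,S)$ with metabolizer $L$ has $\chi_S=p^2$, where $\chi_S$ denotes the characteristic polynomial of $S$ and $p$ that of $S_{|L}$. Indeed $L$ is $\beta$-isotropic of half dimension, so $L=L^{\perp}$ with respect to $\beta$; hence $\beta$ descends to a nondegenerate pairing $L\times(V/L)\to\Q$, and self-adjointness of $S$ (the identity $\beta(Sx,y)=\beta(x,Sy)$) identifies the operator induced by $S$ on $V/L$ with the transpose of $S_{|L}$ under this pairing. Transpose operators share their characteristic polynomial, so the characteristic polynomials of $S_{|L}$ and of $S$ on $V/L$ both equal $p$, and from the $S$-equivariant exact sequence $0\to L\to V\to V/L\to 0$ we obtain $\chi_S=p\cdot p=p^2$. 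Applied to the structure of the previous paragraph, $\chi_S$ is a square in $\Q[s]$.

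Finally I would translate back. Normalize $\Delta_K$ by $\Delta_K(t)=\Delta_K(t^{-1})$ and $\Delta_K(1)=1$, so that the Seifert form of $F$ carries this same symmetrically normalized Alexander polynomial. By Remark~\ref{rem:poly_transf} there are $c\in\Q^{\times}$ and $a\ge 0$ with $\chi_S(s)=c\,(s-1)^a\,\delta(\Delta_K)(s)$. Evaluating the formula of Definition~\ref{def:poly_transf} directly gives $\delta(\Delta_K)(0)=\Delta_K(1)=1$ and $\delta(\Delta_K)(1)\ne 0$ (only the nonzero top-width coefficient of $\Delta_K$ survives), so $(s-1)\nmid\delta(\Delta_K)$; since $\chi_S$ is a square this forces $a$ to be even, and cancelling $(s-1)^a$ leaves $\delta(\Delta_K)(s)=u\,g(s)^2$ with $u\in\Q^{\times}$ and $g(0)\ne 0$. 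Applying $\delta^{-1}$, which is multiplicative and fixes constants, gives $\Delta_K(t)=u\,\delta^{-1}(g)(t)^2$; evaluating at $t=1$ and using $\delta^{-1}(q)(1)=q(0)$ yields $1=u\,g(0)^2$, so $u=g(0)^{-2}$ is a square in $\Q$ and $\Delta_K(t)=\bigl(g(0)^{-1}\delta^{-1}(g)(t)\bigr)^2$ is a square. I expect the main obstacle to be the algebraic lemma of the third paragraph, a form-level ``equivariant Fox--Milnor'' statement, together with controlling the unit-and-$(s-1)$ ambiguity of Remark~\ref{rem:poly_transf}: it is precisely the normalizations $\delta(\Delta_K)(0)=1$ and $\delta^{-1}(q)(1)=q(0)$ that upgrade ``square up to a rational constant'' to an honest square. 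A secondary, routine point is the standard identification of $\det(tA-A^t)$, for $A$ a Seifert matrix of $F$, with the symmetrically normalized $\Delta_K$, so that $\delta$ is applied to the correct polynomial.
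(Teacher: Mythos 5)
Your argument is correct, and it reaches the conclusion by a genuinely different algebraic route than the paper. Both proofs share the same first and last stages: equivariant sliceness forces the associated symmetric structure $(V_+,\beta,S)$ to be metabolic (via Theorem \ref{en_alg_conc}, Lemma \ref{lemma:inclusion} and the isomorphism $\widetilde{\G}_r^\Q\cong\G_{sym}$), and the dictionary $\delta$, $\delta^{-1}$ of Definition \ref{def:poly_transf} translates between $\chi_S$ and $\Delta_K$. The middle step differs. The paper argues by contraposition through the structure theory of Section \ref{sect:grp_str}: if some irreducible $g(t)$ divides $\Delta_K$ to an odd power, either $g$ is symmetric up to units (and the classical Fox--Milnor condition already fails), or $\delta(g(t)g(t^{-1}))$ is an irreducible $p(s)$ occurring with odd multiplicity in $\delta_K(s)$, so the projection of the class of $K$ to the summand $W(\Q[s]/(p(s)))$ of Theorem \ref{thm:grp_str} is a form of odd rank, hence Witt-nontrivial. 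You instead prove directly that a metabolic symmetric structure has square characteristic polynomial, by using $L=L^{\perp}$ to get a perfect pairing $L\times(V/L)\to\Q$ under which the operator induced on $V/L$ is adjoint to $S_{|L}$; this is the classical Fox--Milnor argument transplanted to symmetric structures, and it is correct as written. Your route is more self-contained: it bypasses Theorems \ref{thm:p_decomp} and \ref{thm:p_comp_witt} and the case division on symmetric versus non-symmetric factors, and your bookkeeping via $\delta(\Delta_K)(0)=\Delta_K(1)=1$ and $\delta(\Delta_K)(1)\neq 0$ cleanly upgrades ``square up to a unit and powers of $(s-1)$'' to an honest square, a point the paper leaves implicit. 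What the paper's approach buys in exchange is finer, localized information -- it pins the obstruction to a specific Witt summand $W(\Q[s]/(p(s)))$ -- which is the viewpoint exploited elsewhere in Section \ref{sect:grp_str} and in the signature-jump results.
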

\begin{proof}
Suppose $\Delta_K(t)$ does not satisfy the condition above. It follows that there exists an irreducible polynomial $g(t)\neq t$ appearing with an odd power in the prime decomposition of $\Delta_K(t)$. If $g(t^{-1})=\pm t^n g(t)$ for some $n$, then one can easily see that $\Delta_K(t)$ does not satisfy the (non-equivariant) Fox-Milnor condition, therefore $K$ is not even slice.
On the other hand, if $g(t)\neq\pm t^n g(t)$, we know that $g(-t)$ appears in the factorization of $\Delta_K(t)$ with the same exponent $2k+1$, since $\Delta_K$ is symmetric.
Let $p(s)=\delta(g(t)g(t^{-1}))(s)$. We prove that $p(s)$ is irreducible. Suppose that $p(s)=p_1(s)p_2(s)$ is a non-trivial decomposition. Then we would have a factorization of $g(t)g(t^{-1})=\delta^{-1}(p_1)(t)\delta^{-1}(p_2)(t)$, where $\delta^{-1}(p_i)(t)$ are non-trivial symmetric polynomials. But this is not possible since $g(t)$ and $g(t^{-1})$ are non-symmetric and irreducible. 
It follows that $p(s)$ appears in the irreducible factorization of $\delta_K(s)$ with exponent $2k+1$.
Hence the class of $K$ in $\widetilde{\G}_r^\Q$ is non-trivial (with respect to any choice of direction), since its projection on the summand $W(\Q[s]/(p(s)))$ is represented by a form of odd rank. Therefore $K$ is not equivariantly slice.
\end{proof}

Observe that Theorem \ref{thm:fox_milnor} highlights an important difference between equivariant and non-equivariant algebraic concordance. In fact, given a knot $K$, its non-equivariant algebraic concordance class splits into components depending on the irreducible and symmetric factors of $\Delta_K(t)$, while non-symmetric factors of $\Delta_K(t)$ do not contribute.
On the other hand, if $K$ is a strongly invertible knot, its class in $\widetilde{\G}_r^\Q$ splits into components depending on all irreducible factors of $\delta_K(s)$, and these factors can correspond to irreducible non-symmetric factors of $\Delta_K(t)$.

\section{New equivariant signatures}\label{sect:new_equiv_sign}
In this section, we introduce new equivariant signatures, dependent on a parameter $\lambda\in\R$. Then we clarify the relation between $\widetilde{\sigma}_\lambda$, the Levine-Tristram signatures and the equivariant signature defined in \cite{alfieri2021strongly}.
Furthermore, we give an analysis of the discontinuities of these equivariant signatures similar to the one in \cite{matumoto1977signature}, and we use the signature jumps to obtain lower bounds on the equivariant slice genus.

From now on, we always assume to work with coefficients in $\R$ if not mentioned otherwise. In particular, when we refer to equivariant Seifert form or symmetric structure, we always implicitly consider their natural extensions given by tensoring with $\R$.
\begin{defn}\label{def:sign_and_jumps}
Let $(A,P)$ be an equivariant Seifert form. Given $\lambda\in\R$ consider the hermitian form
$$
A_{\lambda}=\frac{(A+A^t)}{2}((1-\lambda)I-(1+\lambda)P)+i(A-A^t).
$$
We define the \emph{equivariant signature} in $\lambda$ as $$\widetilde{\sigma}_\lambda(A,P)=\lim_{\varepsilon\to 0^+}\frac{\sigma(A_{\lambda+\epsilon})+\sigma(A_{\lambda-\epsilon})}{2},$$
and the \emph{equivariant signature jump} in $\lambda$ as $$\widetilde{J}_\lambda(A,P)=\lim_{\varepsilon\to 0^+}\frac{\sigma(A_{\lambda+\epsilon})-\sigma(A_{\lambda-\epsilon})}{2}.$$
\end{defn}

It is immediate to see that $\widetilde{\sigma}_\lambda$ and $\widetilde{J}_\lambda$ define homomorphisms
$$
\widetilde{\G}_r\longrightarrow\Z.
$$
In the following, given a directed strongly invertible knot $K$, we will denote by $\widetilde{\sigma}_\lambda(K)$ and $\widetilde{J}_\lambda(K)$ the compositions of the homomorphisms above with the map $\C\longrightarrow\widetilde{\G}_r^\Z$.
In other words, $\widetilde{\sigma}_\lambda(K)=\widetilde{\sigma}_\lambda(A,P)$ and $\widetilde{J}_\lambda(A,P)$, where $(A,P)$ is the equivariant Seifert form given by an equivariant Seifert surface of type $0$ for $K$.

\begin{remark}\label{remark:signature_meaning}
Up to a base change, we can always suppose that
$$
P=\begin{pmatrix}
    \id&0\\
    0&-\id
\end{pmatrix}.
$$
In such a base, we have that
$$
A=\begin{pmatrix}
    B&C\\
    -C^t&D
\end{pmatrix}.
$$
and hence
$$
A_\lambda=2\begin{pmatrix}
    -\lambda B&iC\\
    -iC^t&D
\end{pmatrix}
$$
which is easily seen to be congruent to
$$
2\begin{pmatrix}
    -\lambda B-CD^{-1}C^t&0\\
    0&D
\end{pmatrix}.
$$
If we denote by $(V,\beta,S)$ the symmetric structure associated with $(A,P)$, we have that $2B$ represents $\beta$, while $S$ is given by $-B^{-1}C^tD^{-1}C$.
Therefore, it is easy to see that $-2\lambda B-2C^tD^{-1}C$ represents the symmetric bilinear form over $V$ given by 
$$
\beta_{S-\lambda}:V\times V\longrightarrow\R
$$
$$
(x,y)\longmapsto \beta((S-\lambda)x,y).
$$
It follows that $A_\lambda$ is non-degenerate for every $\lambda\in\R$ except when $\lambda$ is a root of the characteristic polynomial of $S$.
When $A_\lambda$ is non-degenerate then $\widetilde{J}_\lambda(A,P)=0$ and $\widetilde{\sigma}_\lambda(A,P)$ is equal to the signature of $A_\lambda$.
Moreover, for every $\lambda\in\R$ we have that
$$
\widetilde{J}_\lambda(A,P)=\lim_{\varepsilon\to 0^+}\frac{\sigma(\beta_{S-\lambda-\epsilon})-\sigma(\beta_{S-\lambda+\epsilon})}{2}.
$$

Finally, observe that
$$
\lim_{\lambda\to-\infty}\widetilde{\sigma}_\lambda(A,P)=\sigma(B)+\sigma(D)\qquad\text{and}\qquad\lim_{\lambda\to+\infty}\widetilde{\sigma}_\lambda(A,P)=\sigma(D)-\sigma(B).
$$
Therefore, given a directed strongly invertible knot $K$, we get from Theorem \ref{equivalent_definition} that
$$
\lim_{\lambda\to-\infty}\widetilde{\sigma}_\lambda(K)=\sigma(K)\qquad\text{and}\qquad\lim_{\lambda\to+\infty}\widetilde{\sigma}_\lambda(K)=\widetilde{\sigma}(K).
$$
\end{remark}

\begin{prop}\label{prop:levine_tristram}
Let $K$ be a directed strongly invertible knot. Then for any $\lambda<0$ and $\omega\in S^1$ such that $\lambda(\omega-1)^2=(\omega+1)^2$ we have
$$
\widetilde{\sigma}_\lambda(K)=\sigma_\omega(K),
$$
\end{prop}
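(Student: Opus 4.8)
The plan is to compute both sides of the asserted identity from a single invariant Seifert surface and to identify the resulting Hermitian matrices by an explicit congruence. First I would fix an invariant Seifert surface $F$ of type $0$ for $K$, choose an auxiliary orientation on it, and let $(A,P)$ be the associated equivariant Seifert form, so that by definition $\widetilde{\sigma}_\lambda(K)=\widetilde{\sigma}_\lambda(A,P)$. Since $\partial F=K$, the matrix $A$ is in particular an ordinary Seifert matrix for $K$, so for every $\omega\in S^1\setminus\{1\}$ the Levine--Tristram signature $\sigma_\omega(K)$ is the signature of the Hermitian form $B_\omega:=(1-\omega)A+(1-\bar\omega)A^t$. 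Hence the statement reduces to showing $\sigma(A_\lambda)=\sigma(B_\omega)$ whenever $\lambda(\omega-1)^2=(\omega+1)^2$, first in the range of parameters where both matrices are nondegenerate.

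Next I would pass to an eigenbasis of $P$. As in Remark \ref{remark:signature_meaning}, after a base change one has $P=\bigl(\begin{smallmatrix}\id&0\\0&-\id\end{smallmatrix}\bigr)$ and $A=\bigl(\begin{smallmatrix}B&C\\-C^t&D\end{smallmatrix}\bigr)$ with $B,D$ symmetric, and then $A_\lambda$ is congruent to $\bigl(\begin{smallmatrix}-\lambda B&iC\\-iC^t&D\end{smallmatrix}\bigr)$; the same block computation shows that $B_\omega$ is a positive scalar multiple of $\bigl(\begin{smallmatrix}(1-\mathrm{Re}\,\omega)B&-i\,\mathrm{Im}\,\omega\,C\\ i\,\mathrm{Im}\,\omega\,C^t&(1-\mathrm{Re}\,\omega)D\end{smallmatrix}\bigr)$. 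The core of the proof is then to conjugate $A_\lambda$ by a block-diagonal matrix $\mathrm{diag}(\alpha\,\id,\beta\,\id)$ with $\alpha,\beta\in\mathbb{C}^{*}$: matching the two diagonal blocks pins down $\lvert\alpha\rvert^{2}$ and $\lvert\beta\rvert^{2}$ up to a common positive factor, which can be chosen with both entries positive precisely because $\lambda<0$, and matching the off-diagonal block then imposes exactly one further equation relating $\lvert\alpha\rvert^{2}\lvert\beta\rvert^{2}$ to $\mathrm{Re}\,\omega$ and $\mathrm{Im}\,\omega$; a direct computation identifies this compatibility condition with $\lambda(\omega-1)^2=(\omega+1)^2$. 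Since congruent Hermitian matrices have the same signature, this gives $\widetilde{\sigma}_\lambda(K)=\sigma_\omega(K)$ at every $\lambda<0$ for which $A_\lambda$ is nondegenerate.

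Finally I would handle the exceptional parameters. By Remark \ref{remark:signature_meaning}, $A_\lambda$ is degenerate exactly when $\lambda$ is a root of the characteristic polynomial of the self-adjoint operator $S$ of the symmetric structure attached to $(A,P)$; by Remark \ref{rem:poly_transf} and the substitution $\delta$ of Definition \ref{def:poly_transf}, these roots correspond under $\lambda(\omega-1)^2=(\omega+1)^2$ to the (finitely many) roots of $\Delta_K$ on $S^1$, that is, to the jump points of $\omega\mapsto\sigma_\omega(K)$. The relation $\lambda(\omega-1)^2=(\omega+1)^2$ defines a monotone homeomorphism between $(-\infty,0)$ and an open arc of $S^1$ with endpoints $1$ and $-1$, so one-sided limits of $\widetilde{\sigma}_\bullet$ and of $\sigma_\bullet$ at corresponding exceptional parameters agree; since both $\widetilde{\sigma}_\lambda(K)$ and $\sigma_\omega(K)$ are defined as the average of their two one-sided limits, the identity extends to all $\lambda<0$. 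The main obstacle is the middle step --- carrying out the congruence carefully enough to see simultaneously that it forces $\lambda<0$ and that it produces exactly the stated change of variables; once that is in place, the treatment of the exceptional parameters is a routine monotonicity-and-averaging argument.
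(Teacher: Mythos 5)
Your strategy is the same as the paper's: fix a type-$0$ invariant Seifert surface, pass to an eigenbasis of $P$ as in Remark \ref{remark:signature_meaning}, relate $A_\lambda$ to the Levine--Tristram form $(1-\omega)A+(1-\overline{\omega})A^t$ by a block rescaling, and dispose of the degenerate parameters using the averaging built into both definitions (your handling of that last point is in fact more explicit than the paper's, which ignores it). Your block formula for $B_\omega$ is correct: writing $\omega=c+is$, one has $B_\omega=2\left(\begin{smallmatrix}(1-c)B&-isC\\ isC^t&(1-c)D\end{smallmatrix}\right)$.

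The gap is precisely the step you assert without carrying out. Conjugating by $\operatorname{diag}(\alpha\,\id,\beta\,\id)$ turns $A_\lambda$ into $2\left(\begin{smallmatrix}-\lambda|\alpha|^2B&i\overline{\alpha}\beta C\\-i\alpha\overline{\beta}C^t&|\beta|^2D\end{smallmatrix}\right)$; matching the diagonal blocks against $(1-c)B$ and $(1-c)D$ gives $-\lambda|\alpha|^2=|\beta|^2=\tau(1-c)$ for a common $\tau>0$, and matching the off-diagonal block gives $|\alpha|^2|\beta|^2=\tau^2s^2$. Eliminating $\tau$ yields $-\lambda=(1-c)^2/s^2=\tan^2(\theta/2)$ for $\omega=e^{i\theta}$, i.e. $\lambda=\left(\tfrac{\omega-1}{\omega+1}\right)^2$, which is $\lambda(\omega+1)^2=(\omega-1)^2$ --- the \emph{reciprocal} of the relation in the statement, not the relation itself. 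So your claim that the compatibility condition ``is'' $\lambda(\omega-1)^2=(\omega+1)^2$ is false, and with the stated relation the asserted congruence does not exist (a Schur-complement computation shows $\sigma(A_\lambda)=\sigma(B+\lambda^{-1}CD^{-1}C^t)+\sigma(D)$ while $\sigma(B_\omega)=\sigma(B-\tfrac{s^2}{(1-c)^2}CD^{-1}C^t)+\sigma(D)$, and these need not agree when $\lambda^{-1}\neq-s^2/(1-c)^2$). The reciprocal relation is also the only one compatible with the rest of the paper: by Proposition \ref{prop:jump_eigenspace} and Remarks \ref{rem:poly_transf} and \ref{remark:transf_roots}, $\widetilde{\sigma}_\lambda$ jumps at $\lambda=\mu(\omega_0)=\left(\tfrac{\omega_0-1}{\omega_0+1}\right)^2$ for unit roots $\omega_0$ of $\Delta_K$, whereas $\sigma_\omega$ jumps at $\omega_0$ itself, so the change of variables must be $\lambda=\mu(\omega)$. (The paper's own proof contains the same slip: the claimed congruence $A_\lambda\cong(A+A^t)+i\sqrt{-\lambda}(A-A^t)$ fails for $\lambda\neq-1$; the correct rescaling produces $(A+A^t)+\tfrac{i}{\sqrt{-\lambda}}(A-A^t)$.) To repair your argument you must actually perform the block computation and correct the change of variables accordingly; as written, the key identity is unproved and, as stated, not provable.
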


\begin{proof}
Let $(A,P)$ be an equivariant Seifert form associated with $K$. As in Remark \ref{remark:signature_meaning}, we can suppose that
\begin{align*}
A=\begin{pmatrix}
    B&C\\
    -C^t&D
\end{pmatrix}\quad&\quad
P=\begin{pmatrix}
    \id&0\\
    0&-\id
\end{pmatrix}\quad&\quad
A_{\lambda}=2\begin{pmatrix}
    -\lambda B&iC\\
    -iC^t&D
\end{pmatrix}.
\end{align*}
Since $\lambda<0$, we have that $\sqrt{-\lambda}\in\R$ and hence $A_{\lambda}$ is congruent to
$$2\begin{pmatrix}
    B&i\sqrt{-\lambda}C\\
    -i\sqrt{-\lambda}C^t&D
\end{pmatrix}=(A+A^t)+i\sqrt{-\lambda}(A-A^t).$$
Finally, observe that $(A+A^t)+i\sqrt{-\lambda}(A-A^t)$ is a positive multiple of $A_\omega=(1-\omega)A+(1-\overline{\omega})A^t$ for $\omega=-\frac{1\pm i\sqrt{-\lambda}}{1\mp i\sqrt{-\lambda}}$. Therefore $\widetilde{\sigma}_{\lambda}(K)=\sigma(A_\omega)=\sigma_\omega(K)$. Finally observe that for a fixed $\lambda<0$, the solution to the equation $\lambda(\omega-1)^2=(\omega+1)^2$ are exactly $\omega=-\frac{1\pm i\sqrt{-\lambda}}{1\mp i\sqrt{-\lambda}}$.
\end{proof}

Let $(V,\beta,S)$ be a symmetric structure. Given $\lambda\in\R$, we denote by $V^\lambda=\bigcup_{N>0}\ker((S-\lambda)^N)$.
Similarly to Lemma \ref{lemma:pq-orth} it is easy to check that if $\lambda\neq \mu$ the $V^\lambda$ and $V^\mu$ are $\beta$-orthogonal.

\begin{lemma}\label{lemma:signature_variation}
Let $(V,\beta,S)$ be a symmetric structure. Suppose that $\lambda\in\R$ is not a root of the characteristic polynomial of $S$. Then for every $\alpha\in\R$ we have $$\sigma(V^\alpha,\beta_{S-\lambda})=\sign(\alpha-\lambda)\cdot\sigma(V^\lambda,\beta).$$
\end{lemma}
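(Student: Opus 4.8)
The plan is to reduce the statement to the generalized $\alpha$-eigenspace $V^\alpha$, where $S-\lambda$ acts as a nonzero scalar plus a nilpotent operator, and then to deform $S-\lambda$ through nondegenerate symmetric forms. We may assume that $\alpha$ is a root of the characteristic polynomial of $S$ and that $\alpha\neq\lambda$: if $\alpha$ is not a root then $V^\alpha=0$, and if $\alpha=\lambda$ then $V^\alpha=V^\lambda=0$ because $\lambda$ is not a root, so in both cases the asserted identity is $0=0$. (Here the right-hand side is to be read as $\sign(\alpha-\lambda)\cdot\sigma(V^\alpha,\beta)$.)

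First I would record the elementary structure on $W:=V^\alpha$. It is $S$-invariant, hence $(S-\lambda)$-invariant, and — arguing exactly as in Lemma \ref{lemma:pq-orth}, together with the $\beta$-orthogonality of distinct generalized eigenspaces — the restriction $\beta|_W$ is nondegenerate. Write $N:=(S-\alpha)|_W$, which is nilpotent and is $\beta|_W$-self-adjoint since $S$ is $\beta$-self-adjoint; thus $(S-\lambda)|_W=(\alpha-\lambda)\id_W+N$, and this operator is $\beta|_W$-self-adjoint with $\alpha-\lambda\neq 0$ as its unique eigenvalue, hence invertible. In particular both $\beta|_W$ and $\beta_{S-\lambda}|_W$ are nondegenerate symmetric bilinear forms on $W$.

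Next I would consider, for $t\in[0,1]$, the operators $T_t:=(\alpha-\lambda)\id_W+tN$ on $W$. Each $T_t$ is $\beta|_W$-self-adjoint, and since $tN$ is nilpotent, $T_t$ has $\alpha-\lambda$ as its only eigenvalue, so $T_t$ is invertible for all $t$. Hence $t\mapsto\beta_{T_t}$, where $\beta_{T_t}(x,y):=\beta(T_tx,y)$, is a continuous path of nondegenerate symmetric bilinear forms on the fixed finite-dimensional real space $W$, joining $\beta_{T_0}=(\alpha-\lambda)\,\beta|_W$ to $\beta_{T_1}=\beta_{S-\lambda}|_W$. Since the signature is constant on path-components of the space of nondegenerate symmetric forms (Sylvester's law of inertia), and since rescaling a symmetric form by the nonzero scalar $\alpha-\lambda$ multiplies its signature by $\sign(\alpha-\lambda)$, we obtain
$$\sigma(V^\alpha,\beta_{S-\lambda})=\sigma\bigl(W,(\alpha-\lambda)\,\beta|_W\bigr)=\sign(\alpha-\lambda)\cdot\sigma(V^\alpha,\beta),$$
as desired.

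I do not expect a genuine obstacle. The only points needing a line of justification are that $N=S-\alpha$ is $\beta$-self-adjoint on $V^\alpha$ and that the forms $\beta_{T_t}$ remain symmetric and nondegenerate throughout the homotopy, both of which are immediate from $S$ being $\beta$-self-adjoint and from $tN$ being nilpotent. A slightly heavier alternative would be to pass to the $p$-component with $p(s)=s-\alpha$, invoke Proposition \ref{prop:exp_red} to reduce to $W=\ker(S-\alpha)$, and then identify $\beta_{S-\lambda}|_W$ with $(\alpha-\lambda)$ times the trace form of Theorem \ref{thm:p_comp_witt}; but the direct deformation argument above is shorter and avoids the reduction.
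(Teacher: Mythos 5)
Your proof is correct and is essentially the paper's own argument: the paper deforms $S_t=(1-t)S+t\alpha\,\id$ on $V^\alpha$, so $S_t-\lambda=(\alpha-\lambda)\id+(1-t)N$ is exactly your path $T_{1-t}$, and the conclusion follows in the same way from constancy of the signature along a path of nondegenerate symmetric forms. You also correctly read the right-hand side as $\sign(\alpha-\lambda)\cdot\sigma(V^\alpha,\beta)$ (the $V^\lambda$ in the statement is a typo, as the paper's own proof confirms), and your explicit checks of nondegeneracy of $\beta|_{V^\alpha}$ and self-adjointness of $N$ are welcome details the paper leaves implicit.
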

\begin{proof}
By a slight abuse of notation, we denote the restriction of $S$ and $\beta$ to $V^\alpha$ again by $S$ and $\beta$.
For $t\in[0,1]$ let $S_t=(1-t)S+t\alpha\id$ and $\beta_{S_t-\lambda}(x,y)=\beta((S_t-\lambda)x,y)$. Observe that the only eigenvalue of $S_t$ is $\alpha$ for every $t\in[0,1]$, therefore we have that $S_t-\lambda\id$ is non-singular and $\beta_{S_t-\lambda}$ is non-degenerate for every $t$.
It follows that the signature is constant in $t$, and hence $$\sigma(V^\alpha,\beta_{S-\lambda})=\sigma(V^\alpha,(\alpha-\lambda)\beta)=\sign(\alpha-\lambda)\sigma(V^\alpha,\beta)$$
by evaluating at $t=0$ and $t=1$.
\end{proof}
\begin{prop}\label{prop:jump_eigenspace}
Let $(A,P)$ be an equivariant Seifert form, and let $(V,\beta,S)$ be the associated symmetric structure.
Then for every $\lambda\in\R$ we have that $\widetilde{J}_\lambda(A,P)$ is equal to $-\sigma(V^\lambda,\beta)$.
\end{prop}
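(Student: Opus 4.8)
The plan is to reduce everything to the description of $\widetilde{J}_\lambda$ in terms of the associated symmetric structure recorded in Remark~\ref{remark:signature_meaning}, namely
$$\widetilde{J}_\lambda(A,P)=\lim_{\varepsilon\to 0^+}\frac{\sigma(\beta_{S-\lambda-\varepsilon})-\sigma(\beta_{S-\lambda+\varepsilon})}{2},$$
and then to evaluate the two signatures on the right by splitting $V$ along the primary decomposition of $(V,S)$.

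First I would write $V=V^\lambda\oplus U$, where $U$ is the sum of the generalized eigenspaces of $S$ for all eigenvalues (real or complex) different from $\lambda$. By the argument of Lemma~\ref{lemma:pq-orth}, distinct primary components are $\beta$-orthogonal, so this splitting is $\beta$-orthogonal and $S$-invariant; consequently $\beta$ restricts to a nondegenerate form on each summand, $(V^\lambda,\beta|_{V^\lambda},S|_{V^\lambda})$ and $(U,\beta|_U,S|_U)$ are again symmetric structures, and for every real $\mu$ the form $\beta_{S-\mu}$ is block diagonal, so $\sigma(\beta_{S-\mu})=\sigma(V^\lambda,\beta_{S-\mu})+\sigma(U,\beta_{S-\mu})$. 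If $\lambda$ is not an eigenvalue of $S$ then $V^\lambda=0$ and $\beta_{S-\mu}$ is nondegenerate for $\mu$ near $\lambda$, so both sides of the claimed identity vanish; hence I may assume $\lambda$ is an eigenvalue of $S$.

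Next I would treat the two summands separately. On $U$ the operator $S|_U-\mu$ is invertible for all $\mu$ in a small neighbourhood of $\lambda$ (the eigenvalues of $S|_U$ are a finite set avoiding $\lambda$), so $\mu\mapsto\sigma(U,\beta_{S-\mu})$ is locally constant near $\mu=\lambda$ and the $U$-contributions cancel in $\sigma(\beta_{S-\lambda-\varepsilon})-\sigma(\beta_{S-\lambda+\varepsilon})$ for $\varepsilon$ small. On $V^\lambda$ the symmetric structure $(V^\lambda,\beta|_{V^\lambda},S|_{V^\lambda})$ has $\lambda$ as its unique eigenvalue, so Lemma~\ref{lemma:signature_variation}, applied with parameter $\lambda\pm\varepsilon$ (which is not an eigenvalue), gives $\sigma(V^\lambda,\beta_{S-\lambda-\varepsilon})=\sign(-\varepsilon)\,\sigma(V^\lambda,\beta)=-\sigma(V^\lambda,\beta)$ and $\sigma(V^\lambda,\beta_{S-\lambda+\varepsilon})=\sign(\varepsilon)\,\sigma(V^\lambda,\beta)=\sigma(V^\lambda,\beta)$ for $\varepsilon>0$. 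Combining the two pieces yields $\sigma(\beta_{S-\lambda-\varepsilon})-\sigma(\beta_{S-\lambda+\varepsilon})=-2\,\sigma(V^\lambda,\beta)$ for all sufficiently small $\varepsilon>0$, hence $\widetilde{J}_\lambda(A,P)=-\sigma(V^\lambda,\beta)$.

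I do not expect a serious obstacle here: the argument is essentially careful sign-bookkeeping. The points that need attention are that the primary decomposition $V=V^\lambda\oplus U$ is $\beta$-orthogonal — so that $\beta|_{V^\lambda}$ is nondegenerate and Lemma~\ref{lemma:signature_variation} genuinely applies to $(V^\lambda,\beta|_{V^\lambda},S|_{V^\lambda})$ — and that $\mu\mapsto\sigma(U,\beta_{S-\mu})$ is constant near $\mu=\lambda$; both follow from standard linear algebra together with Lemma~\ref{lemma:pq-orth}. One should also respect the order of the difference in the formula of Remark~\ref{remark:signature_meaning}, which is $\sigma(\beta_{S-\lambda-\varepsilon})$ minus $\sigma(\beta_{S-\lambda+\varepsilon})$; this is exactly what produces the minus sign in the statement.
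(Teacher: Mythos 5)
Your argument is correct and follows essentially the same route as the paper: reduce to the jump of $\sigma(\beta_{S-\mu})$ via Remark \ref{remark:signature_meaning}, split $V$ $\beta$-orthogonally along the primary decomposition of $S$, observe that the summands away from $\lambda$ contribute a locally constant signature, and apply Lemma \ref{lemma:signature_variation} on $V^\lambda$ to extract the sign flip. The only cosmetic difference is that you lump everything other than $V^\lambda$ into a single nondegenerate summand $U$, whereas the paper treats the other real generalized eigenspaces via Lemma \ref{lemma:signature_variation} and the complex part separately; the sign bookkeeping matches.
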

\begin{proof}
It follows from Remark \ref{remark:signature_meaning} that is sufficient to study how the signature of $\beta_{S-\lambda}$ varies in $\lambda$.
Let $W\subset V$ the $\beta$-orthogonal to $\bigoplus_{\lambda\in\R}V^\lambda$. Observe that $W$ is $S$-invariant and that $S_{|W}$ has no real eigenvalues.
Therefore, the signature of the restriction of $\beta_{S-\lambda}$ to $W$ is constant in $\lambda$.

Since if $\mu_1\neq\mu_2$ then $V^{\mu_1}$ and $V^{\mu_2}$ are orthogonal with respect to $\beta$ and hence with respect to $\beta_{S-\lambda}$, it is sufficient to consider the variation of the signature separately on every $V^\alpha$.

Let now $\epsilon>0$ be small enough so that $\lambda$ is the only eigenvalue of $S$ in $[\lambda-\epsilon,\lambda+\epsilon]$.

Then by Lemma \ref{lemma:signature_variation} we have that $\sigma(V^\mu,\beta_{S-\lambda+\epsilon})=\sigma(V^\mu,\beta_{S-\lambda-\epsilon})$ for every $\mu\neq\lambda$, while
$$
\sigma(V^\lambda,\beta_{S-\lambda+\epsilon})=-\sigma(V^\lambda,\beta_{S-\lambda-\epsilon})=-\sigma(V^\lambda,\beta).
$$

Since the signature of $\beta_{S-\lambda}$ varies only on the summand $V^\lambda$, we get
$$
\widetilde{J}_\lambda(A,P)=-\sigma(V^\lambda,\beta_{|V^\lambda}).
$$
\end{proof}

\begin{cor}\label{cor:indep_jump}
Let $K$ be a directed strongly invertible knot, and let $(A,P)$ be an equivariant Seifert form obtained from an equivariant Seifert surface for $K$ of any type. Then for every $\lambda\in\R$, $\lambda\neq 1$ we have that $\widetilde{J}_\lambda(K)=\widetilde{J}_\lambda(A,P)$, i.e. for $\lambda\neq 1$, $\widetilde{J}_\lambda$ does not depend on the type of the equivariant Seifert surface.
\end{cor}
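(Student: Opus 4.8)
The plan is to reduce the statement to the single fact that $\widetilde{J}_\lambda$ vanishes, for every $\lambda\neq 1$, on the class of the invariant Seifert surface $G$ of type $-1$ for the unknot depicted in Figure~\ref{unknot_surface}. Let $F$ be an invariant Seifert surface of type $n$ for $K$, let $(A,P)$ be its underlying equivariant Seifert form (obtained by forgetting $h$ and $\widetilde{\lk}$), and let $(A_G,P_G)$ be the equivariant Seifert form underlying $\Sy(G)$. By the remark following Theorem~\ref{en_alg_conc} we have $\Phi(K)=[\Sy(F)]+n[\Sy(G)]$ in $\widetilde{\G}^\Z$; applying the forgetful homomorphism $r\colon\widetilde{\G}^\Z\to\widetilde{\G}_r^\Z$ and then the homomorphism $\widetilde{J}_\lambda$ gives
$$
\widetilde{J}_\lambda(K)=\widetilde{J}_\lambda(A,P)+n\,\widetilde{J}_\lambda(A_G,P_G).
$$
Hence it suffices to show $\widetilde{J}_\lambda(A_G,P_G)=0$ whenever $\lambda\neq 1$.

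Let $(V,\beta,S)$ be the symmetric structure associated with $(A_G,P_G)$. By Proposition~\ref{prop:jump_eigenspace} we have $\widetilde{J}_\lambda(A_G,P_G)=-\sigma(V^\lambda,\beta)$, so it is enough to check that $V^\lambda=0$ for every $\lambda\neq 1$, i.e.\ that $1$ is the only real eigenvalue of $S$. Forgetting the involution, $G$ is a Seifert surface for the unknot, so its Alexander polynomial $\Delta_{\theta_G}(t)=\det(A_G-tA_G^t)$ equals that of the unknot up to units of $\Q[t^{\pm 1}]$; normalised so that $\Delta_{\theta_G}(t)=\Delta_{\theta_G}(t^{-1})$ it is therefore a nonzero constant. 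By Remark~\ref{rem:poly_transf} the characteristic polynomial of $S$ equals $\delta(\Delta_{\theta_G})(s)$ up to a unit of $\Q$ and factors $(s-1)$; since $\delta(\Delta_{\theta_G})$ is again a nonzero constant, the characteristic polynomial of $S$ is a power of $(s-1)$ up to a unit. Consequently $1$ is its only root, so $V^\lambda=0$ for $\lambda\neq 1$ and $\widetilde{J}_\lambda(A_G,P_G)=-\sigma(0,\beta)=0$, as claimed.

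No genuine obstacle arises here: the only points deserving care are that $\Phi$, $r$ and $\widetilde{J}_\lambda$ are group homomorphisms, which legitimises the additivity displayed above, and that Remark~\ref{rem:poly_transf} is applied to the correct (trivial) polynomial. The substantive content is merely the elementary observation that the triviality of the unknot's Alexander polynomial forces the operator $S$ of the symmetric structure of $G$ to have $1$ as its unique real eigenvalue — which is exactly what makes $\widetilde{J}_\lambda$, but not necessarily $\widetilde{J}_1$, independent of the type of the invariant Seifert surface.
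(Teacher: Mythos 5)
Your proposal is correct and follows essentially the same route as the paper: both reduce the statement to the fact that adding copies of $\Sy(G)$ only affects the $1$-eigenspace of the associated symmetric structure, and both conclude via Proposition~\ref{prop:jump_eigenspace}. The only (harmless) difference is that the paper computes $(A_G,P_G)$ explicitly and reads off $S_G=(1)$ directly, whereas you deduce that $1$ is the unique eigenvalue of $S_G$ from the triviality of the unknot's Alexander polynomial together with Remark~\ref{rem:poly_transf}.
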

\begin{proof}
Let $(A,P)$ be the equivariant Seifert form given by any equivariant Seifert surface $F$ for $K$, and let $(V,\beta,S)$ be the associated symmetric structure. Recall that we can obtain a surface of type $0$ for $K$ by performing a boundary connected sum of $F$ with the type $-1$ surface $G$ (or its mirror image) as Lemma \ref{stab_surface}. Observe that the equivariant Seifert form given by $G$ is
\begin{align*}
    A_G=\begin{pmatrix}
        0&1\\
        0&0
    \end{pmatrix}\quad&\quad
    P_G=\begin{pmatrix}
        0&1\\
        1&0
    \end{pmatrix},
\end{align*}
and hence the associated symmetric structure is
$$
\beta_G=(1)\quad\quad S_G=(1).
$$
It follows that adding some copies of $(A_G,P_G)$ to $(A,P)$ does not modify the $\lambda$-eigenspace of $S$, for $\lambda\neq1$.
Therefore, by Proposition \ref{prop:jump_eigenspace} we can compute $\widetilde{J}_\lambda(K)$ with any equivariant Seifert surface, for $\lambda\neq1$.
\end{proof}

\begin{thm}\label{thm:genus_bound_jump}
Given a directed strongly invertible knot $K$, for every $\lambda\in\R$, $\lambda\neq 1$ we have that
$$\widetilde{g}_4(K)\geq\frac{|\widetilde{J}_\lambda(K)|}{4}.$$
\end{thm}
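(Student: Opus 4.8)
The plan is to deduce the inequality from Theorem~\ref{lower_bound_genus}, which already packages all the $4$-dimensional input: it gives
$$2\widetilde g_4(K)\ \geq\ sc(K)\ \geq\ \min_{n\in\Z}ac\bigl(\Phi(K)+n\,\Sy(G)\bigr).$$
So it is enough to bound the right-hand side below in terms of $\widetilde J_\lambda(K)$. First I would observe that, since $\widetilde J_\lambda$ is a homomorphism $\widetilde\G^\Z_r\to\Z$, the symmetric structure of $G$ is $(\Q,(1),(1))$, and this has no generalized $\lambda$-eigenspace when $\lambda\neq 1$, we get $\widetilde J_\lambda(\Sy(G))=0$ and hence $\widetilde J_\lambda\bigl(\Phi(K)+n\Sy(G)\bigr)=\widetilde J_\lambda(K)$ for every $n$ (this is, of course, the content of Corollary~\ref{cor:indep_jump}). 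Thus the theorem reduces to the purely algebraic claim that every equivariant Seifert system $\Sy$ satisfies $ac(\Sy)\geq\tfrac12\,|\widetilde J_\lambda(\Sy)|$ for $\lambda\neq 1$; combined with $sc(K)\leq 2\widetilde g_4(K)$ this yields the stated bound.

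To prove the algebraic claim, let $H\subset M$ be a $\rho$-invariant partial metabolizer of maximal rank $k$, so $ac(\Sy)=m-k$ where $\rank M=2m$, and work over $\Q$. Since $\theta$ vanishes on $H$, so do $\theta^t$, the symmetric form $\beta:=\theta+\theta^t$, and the unimodular skew form $\omega:=\theta-\theta^t$; hence $H_\Q$ is $\rho$-invariant and isotropic for both $\beta$ and $\omega$. Because $\rho$ acts on $\omega$ as an anti-isometry, the eigenspaces $V_{\pm}$ of $\rho$ are $\omega$-Lagrangian and $\omega$ pairs $V_+$ with $V_-$; using this I would complete $H_\Q$ \emph{$\rho$-equivariantly} to a subspace $N$ of dimension $2k$ on which $\omega$ is nondegenerate and which has $H_\Q$ as an $\omega$-Lagrangian, and then check that $(\theta,\rho)$ is equivariantly concordant to $(\theta|_{N^{\perp_\omega}},\rho|_{N^{\perp_\omega}})$ — this is the equivariant counterpart of Levine's reduction of a Seifert form modulo an isotropic subspace, which is also what underlies the well-definedness of $ac$. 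The reduced equivariant Seifert form has rank $2m-2k=2\,ac(\Sy)$, so by the dictionary of Section~\ref{sect:grp_str} its associated symmetric structure $(V_+',\beta',S')$ has $\dim V_+'=ac(\Sy)$. Proposition~\ref{prop:jump_eigenspace} then gives
$$|\widetilde J_\lambda(\Sy)|=\bigl|\widetilde J_\lambda(\theta|_{N^{\perp_\omega}},\rho|_{N^{\perp_\omega}})\bigr|=\bigl|\sigma\bigl((V_+')^{\lambda},\beta'\bigr)\bigr|\leq\dim (V_+')^{\lambda}\leq\dim V_+'=ac(\Sy),$$
which is more than enough.

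I expect the only genuinely delicate point to be the $\rho$-equivariant completion of $H_\Q$ to the $\omega$-hyperbolic subspace $N$: one must produce a complement $Q$ of dimension $k$ that is simultaneously $\rho$-invariant, $\omega$-isotropic, and paired perfectly with $H_\Q$ by $\omega$, and then make the reduction step $\rho$-equivariant. The subtlety is precisely that $\dim(H_\Q\cap V_+)$ and $\dim(H_\Q\cap V_-)$ can be arbitrary — $H$ is $\rho$-invariant but need not be $\langle T,\rho\rangle$-invariant — so one cannot simply restrict $H_\Q$ to $V_+$ and argue with the symmetric structure directly, and must instead route the argument through the skew form $\omega$ (which is why the factor appears). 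Everything else in the proof — removing the fixed circles, the $3$-manifold cobordism, and the rank estimate on $H$ — is already absorbed into Theorem~\ref{lower_bound_genus}, so no new geometric work is required.
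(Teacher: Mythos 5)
Your first paragraph is fine and is exactly the paper's strategy: Theorem~\ref{lower_bound_genus} plus Corollary~\ref{cor:indep_jump} reduce everything to an inequality between $ac$ and $|\widetilde{J}_\lambda|$. The gap is in the algebraic step, and it sits precisely at the point you flagged as delicate. The claim that $(\theta,\rho)$ is equivariantly concordant to $(\theta|_{N^{\perp_\omega}},\rho|_{N^{\perp_\omega}})$, for $N$ a $\rho$-invariant $\omega$-hyperbolic completion of a partial metabolizer $H$, is false in general. The natural metabolizer for $(\theta,\rho)\oplus(-\theta|_W,\rho|_W)$ with $W=N^{\perp_\omega}$, namely $(H\oplus 0)+\{(w,w):w\in W\}$, is not isotropic: evaluating on $(h+w,w)$ and $(h'+w',w')$ leaves the term $\theta(h,w')+\theta(w,h')$, and $\omega(H,W)=0$ only gives $\theta(h,w')=\theta(w',h)$, not vanishing. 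This is the same reason the classical Levine reduction requires the isotropic subspace to be invariant under $T=\beta^{-1}\omega$; your $H$ is only $\rho$-invariant, and routing through $\omega$ does not repair this. In fact the inequality your argument would establish, $ac(\Sy)\geq|\widetilde{J}_\lambda(\Sy)|$, is false — which is why the theorem has a $4$ and not a $2$. A counterexample over $\Q$: let $V_\pm=\Q^3$, $\rho=\id\oplus(-\id)$, $\beta_+=\mathrm{diag}(1,1,-1)$, $\beta_-=\mathrm{diag}(-2,-2,1)$, $T|_{V_+}=\id$ (so $S=T^2|_{V_+}=\mathrm{diag}(2,2,1)$), and $\theta=\tfrac12(\beta+\omega)$ with $\omega(x,y)=\beta(Tx,y)$. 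Then $H=\langle e_1+e_3\rangle\oplus\langle f_1+f_2+2f_3\rangle$ is a rank-$2$ $\rho$-invariant $\theta$-isotropic submodule (all four pairings vanish, the cross terms because $\beta_-\bigl((1,0,1),(1,1,2)\bigr)=-2+2=0$), so $ac\leq m-k=1$; yet Proposition~\ref{prop:jump_eigenspace} gives $\widetilde{J}_2=-\sigma(V_+^{2},\beta_+)=-\sigma(\mathrm{diag}(1,1))=-2$. In particular $(\theta,\rho)$ is not concordant to any form of rank $2$, even though it carries a rank-$2$ partial metabolizer in rank $6$.

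The paper avoids any Witt-class reduction. A partial metabolizer $H$ of rank $k$ is, for every $\mu$, an isotropic subspace of the Hermitian form $A_\mu$ of Definition~\ref{def:sign_and_jumps}: since $H$ is $P$-invariant and both $A+A^t$ and $A-A^t$ vanish on $H\times H$, one gets $x^tA_\mu \overline{y}=0$ for $x,y\in H\otimes\mathbb{C}$. Hence $|\sigma(A_\mu)|\leq 2m-2k=2\,ac(\Sy)$ whenever $A_\mu$ is nondegenerate, and taking $\mu=\lambda\pm\epsilon$ yields $|\widetilde{\sigma}_\lambda(\Sy)|\leq 2\,ac(\Sy)$ and $|\widetilde{J}_\lambda(\Sy)|\leq 2\,ac(\Sy)$, i.e.\ the correct bound $ac(\Sy)\geq|\widetilde{J}_\lambda(\Sy)|/2$ (sharp in the example above, where $|\widetilde{\sigma}_{2\pm\epsilon}|=2=2\,ac$). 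With this estimate in place of your second paragraph, the rest of your outline goes through and recovers the stated factor of $4$.
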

\begin{proof}
Let $\mathcal{S}=(A,P,h,\widetilde{\lk})$ be the equivariant Seifert system associated with an equivariant Seifert surface $F$ for $K$ of any type.
It follows easily from Definitions \ref{def:alg_complx} and \ref{def:sign_and_jumps} that for every $\lambda\in\R$
$$
ac(\mathcal{S})\geq |\widetilde{\sigma}_{\lambda}(A,P)|/2,
$$
and therefore
$$
ac(\mathcal{S})\geq |\widetilde{J}_{\lambda}(A,P)|/2.
$$
From Corollary \ref{cor:indep_jump} we know that for $\lambda\neq1$ the left-hand side in the inequality above does not depend on the type of the surface $F$. Therefore using Theorem \ref{lower_bound_genus} we get
$$\widetilde{g}_4(K)\geq \frac{sc(K)}{2}\geq\frac{|\widetilde{J}_\lambda(K)|}{4}.$$
\end{proof}

\begin{remark}\label{remark:transf_roots}
Observe that the roots of $\delta_K(s)$ correspond to pairs of roots of $\Delta_K(t)$.
First of all, notice that $-1,0,1$ cannot be roots of $\Delta_K(t)$.
It follows from Definition \ref{def:poly_transf} that if $z\in\mathbb{C}\setminus\{-1,0,1\}$ is a root of $\Delta_K(t)$ then $\mu(z)$ is a root of $\delta_K(s)$, where
$$
\mu:\mathbb{C}\setminus\{-1,0,1\}\longrightarrow\mathbb{C}\setminus\{0,1\}
$$
$$
z\longmapsto \left(\frac{z-1}{z+1}\right)^2.
$$
It is not difficult to see that $\mu(z_1)=\mu(z_2)$ if and only if $z_1=(z_2)^{-1}$ and that $\mu(z)\in\R$ if and only if $z\in\R$ or $z\in S^1$. Therefore, if $\{\lambda_1,\lambda_1^{-1},\dots,\lambda_d,\lambda_d^{-1}\}$ is the set of roots of $\Delta_K(t)$ (which come in pairs since the polynomial is symmetric), we get that $\{\mu(\lambda_1),\dots,\mu(\lambda_d)\}$ is the set of roots of $\delta_K(s)$, and this correspondence preserves the multiplicity. In particular, real negative roots of $\delta_K(s)$ correspond to unitary roots of $\Delta_K(t)$, while real positive roots of $\delta_K(s)$ correspond to real roots of $\Delta_K(t)$.
\end{remark}

Using Theorem \ref{thm:genus_bound_jump} we obtain the following result, which can be seen as a generalization of \cite[Theorem 5.7]{miller_powell}.

\begin{thm}\label{thm:genus_bound_root}
Let $K$ and $J$ be directed strongly invertible knots. Suppose that there exists $\Lambda\in\R\cup S^1$ such that
\begin{itemize}
    \item $\Lambda$ is a root of $\Delta_K(t)$ with odd multiplicity,
    \item $\Lambda$ is not a root of $\Delta_J(t)$.
\end{itemize}
Then for every $n,m\in\Z$, we have that $\widetilde{g}_4(K^n\widetilde{\#}J^m)\geq|n|/4$, where $K^n\widetilde{\#}J^m$ is the connected sum of $n$ copies of $K$ and $m$ copies of $J$ taken in any order.
\end{thm}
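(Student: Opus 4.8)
The plan is to obtain Theorem~\ref{thm:genus_bound_root} as a one-parameter instance of Theorem~\ref{thm:genus_bound_jump}, for a parameter $\lambda$ chosen so as to ``see'' the distinguished root $\Lambda$. Concretely, I would set $\lambda=\mu(\Lambda)=\left(\frac{\Lambda-1}{\Lambda+1}\right)^2$, where $\mu$ is the substitution of Remark~\ref{remark:transf_roots}. The first step is to check that $\lambda$ is an admissible parameter: since $\Lambda\in\R\cup S^1$ and, by Remark~\ref{remark:transf_roots}, $\Lambda\notin\{-1,0,1\}$, we get $\lambda\in\R$, and $\lambda\neq 1$ because $\mu(z)=1$ forces $z=0$. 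Thus Theorem~\ref{thm:genus_bound_jump} and Corollary~\ref{cor:indep_jump} are available at $\lambda$, and $\widetilde{J}_\lambda$ may be computed from an equivariant Seifert surface of any type.

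Next I would evaluate the homomorphism $\widetilde{J}_\lambda\colon\C\to\Z$ on $K$ and on $J$ separately via Proposition~\ref{prop:jump_eigenspace}. For $K$, take an equivariant Seifert form $(A,P)$ coming from an equivariant Seifert surface of type $0$ for $K$, with associated symmetric structure $(V,\beta,S)$; by Remark~\ref{rem:poly_transf} the characteristic polynomial of $S$ equals $\delta_K(s)$ up to units and powers of $(s-1)$. Since $\Lambda$ is a root of $\Delta_K(t)$ with odd multiplicity, Remark~\ref{remark:transf_roots} gives that $\lambda=\mu(\Lambda)$ is a root of $\delta_K(s)$ of the same odd multiplicity, hence (using $\lambda\neq 1$) the generalized $\lambda$-eigenspace $V^\lambda$ of $S$ is odd-dimensional. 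As $S$ is $\beta$-self-adjoint, $V^\lambda$ is $\beta$-orthogonal to all the other generalized eigenspaces, so $\beta$ restricts to a non-degenerate symmetric form on $V^\lambda$ and $\sigma(V^\lambda,\beta)\equiv\dim V^\lambda\equiv 1\pmod 2$; by Proposition~\ref{prop:jump_eigenspace}, $\widetilde{J}_\lambda(K)=-\sigma(V^\lambda,\beta)$ is therefore odd, so $|\widetilde{J}_\lambda(K)|\geq 1$. For $J$: if $\lambda$ were a root of $\delta_J(s)$, then $\lambda=\mu(z)$ for some root $z$ of $\Delta_J(t)$, which forces $z\in\{\Lambda,\Lambda^{-1}\}$; but $\Lambda$ is not a root of $\Delta_J(t)$, and neither is $\Lambda^{-1}$ since $\Delta_J(t)=\Delta_J(t^{-1})$, a contradiction. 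Hence $\lambda$ is not a root of the characteristic polynomial of the symmetric structure associated with $J$, the corresponding $\lambda$-eigenspace vanishes, and $\widetilde{J}_\lambda(J)=0$.

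Finally, since $\widetilde{J}_\lambda$ is a group homomorphism into the abelian group $\Z$, for any ordering of the summands we get $\widetilde{J}_\lambda(K^n\widetilde{\#}J^m)=n\,\widetilde{J}_\lambda(K)+m\,\widetilde{J}_\lambda(J)=n\,\widetilde{J}_\lambda(K)$, so $|\widetilde{J}_\lambda(K^n\widetilde{\#}J^m)|\geq|n|$, and Theorem~\ref{thm:genus_bound_jump} yields $\widetilde{g}_4(K^n\widetilde{\#}J^m)\geq|n|/4$. I do not expect a serious obstacle here: the substance is already packaged in Theorem~\ref{thm:genus_bound_jump} and Proposition~\ref{prop:jump_eigenspace}. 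The only points demanding care are the bookkeeping around the correspondence $\mu$ (that $\lambda\neq1$, that $\mu$ preserves multiplicities, and that $\mu(z_1)=\mu(z_2)$ iff $z_2\in\{z_1,z_1^{-1}\}$), and the elementary parity fact that a non-degenerate symmetric real form has signature congruent mod $2$ to its dimension — this is exactly what converts ``$V^\lambda$ odd-dimensional'' into ``$\widetilde{J}_\lambda(K)\neq 0$''.
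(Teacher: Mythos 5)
Your proposal is correct and follows essentially the same route as the paper's proof: set $\lambda=\mu(\Lambda)$, use Remark \ref{remark:transf_roots} and Proposition \ref{prop:jump_eigenspace} to get $|\widetilde{J}_\lambda(K)|\geq 1$ (via the odd-dimensionality of $V^\lambda_K$) and $\widetilde{J}_\lambda(J)=0$, then conclude by the homomorphism property and Theorem \ref{thm:genus_bound_jump}. The extra details you supply --- that $\lambda\neq 1$, that $\Lambda^{-1}$ is also excluded as a root of $\Delta_J$ by symmetry, and the explicit parity argument $\sigma\equiv\dim\pmod 2$ --- are all points the paper leaves implicit, and you fill them in correctly.
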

\begin{proof}
Let $(V_K,\beta_K,S_K)$ and $(V_J,\beta_J,S_J)$ be any symmetric structures associated with $K$ and $J$ respectively, and let $\lambda=\mu(\Lambda)$. From Remark \ref{remark:transf_roots} we know that $\lambda$ is a real root of $\delta_K(s)$ with odd multiplicity. Therefore $\dim(V^\lambda_K)$ is odd and $|\widetilde{J}_\lambda(K)|\geq 1$ by Proposition \ref{prop:jump_eigenspace}.
Viceversa, since $\lambda$ is not a root of $\delta_J(s)$, we have that $\dim(V^\lambda_J)=0$  and hence $\widetilde{J}_\lambda(J)=0$.
It follows that $|\widetilde{J}_\lambda(K^n\widetilde{\#}J^m)|\geq |n|$, and hence $\widetilde{g}_4(K^n\widetilde{\#}J^m)\geq |n|/4$ by Theorem \ref{thm:genus_bound_jump}.
\end{proof}

We conclude this section with an example of application of Theorem \ref{thm:genus_bound_root}.

\begin{ex}\label{ex:alexander_roots}
Let $\{K_i\}_{i\in I}$ be a family of (algebraically) slice strongly invertible knots and $\{\lambda_i\}_{i\in I}\subset \R$ such that:
\begin{enumerate}
    \item $\lambda_i$ is a root of $\Delta_{K_i}(t)$ with odd multiplicity,
    \item $\Delta_{K_j}(\lambda_i)\neq 0$ for $i\neq j$.
\end{enumerate}
Endow each $K_i$ with any choice of direction and consider the subgroup $G_{I}\subset\C$ generated by $\{K_i\}_{i\in I}$.
It follows immediately from Theorem \ref{thm:genus_bound_root} that the image of $G_I$ in $\widetilde{\G}_r^{\Q}$ has rank $|I|$.

For example, take $I\subset\Z[t]$ to be an infinite family of irreducible, non-symmetric polynomials of odd degree, such that
\begin{itemize}
    \item $f(1)=1$ for all $f\in I$,
    \item $f(t)\neq t^{\deg g}g(t^{-1})$ for all $f,g\in I$.
\end{itemize}
From \cite{sakai1983polynomials} we know that for all $f\in I$ there exists a strongly invertible knot $K_f$ with Alexander polynomial $\Delta_{K_f}(t)=f(t)f(t^{-1})$. In particular, each $K_f$ is algebraically slice and the family $\{K_f\}_{f\in I}$ satisfies the conditions above (since each $f$ has at least one real root with multiplicity one), therefore it generates a subgroup of $\C$ of infinite rank.
\end{ex}

\begin{remark}\label{rem:lin_indep}
Let $G_I\subset\C$ be a subgroup defined as in Example \ref{ex:alexander_roots}, with $I$ a countable infinite set.
Observe that the equivariant signature defined by Alfieri and Boyle takes values in $\Z$, hence it vanishes on a subgroup $H_I\subset G_I$ such that $G_I/H_I$ is either $0$ or $\Z$. Moreover, all Levine-Tristram signatures vanish on $H_I$, since by hypothesis is spanned by algebraically slice knots. However, using the equivariant signature jumps arguing as above, we have that $H_I$ surjects onto $\Z^\infty$. In particular, this shows that the $\{\widetilde{J}_\lambda\}_{\lambda>0}$ are actually new invariants, independent from Levine-Tristram and Alfieri-Boyle signatures. For other concrete examples regarding the independence of $\widetilde{J}_\lambda$ from Levine-Tristram signatures, see the Appendix \ref{apx:2_bridge}.
\end{remark}

\section{Appendix: equivariant slice genus of 2-bridge knots}\label{apx:2_bridge}
Recall that every $2$-bridge knot is strongly invertible, see \cite{sakuma}.
We already know from \cite{diprisa_framba} that any given $2$-bridge knot is not equivariantly slice and that has infinite order in $\C$\footnote{This result is stated in the smooth category in \cite{diprisa_framba}. However, all the arguments can be adapted to work in the topological category.}, independently of the choice of strong inversion and direction.

In this appendix\footnote{We used \cite{knotinfo} to gather the Alexander polynomial, Levine-Tristram signature function and topological concordance order of all knots considered in the appendix.} we apply the results of Section \ref{sect:new_equiv_sign} to study the equivariant slice genus of $2$-bridge knots.

\begin{defn}
Let $K$ be a directed strongly invertible knot. We denote the \emph{maximal signature jump} of $K$ by
$$\widetilde{J}(K)=\sup_{\lambda\neq 1}|\widetilde{J}_\lambda(K)|.$$
\end{defn}

Let $\mathcal{F}$ be the family of $2$-bridge knots with crossing number less or equal to 12 and with (averaged) Levine-Tristram signature function $\sigma_\omega$ identically zero. 
For every knot $K$ in $\mathcal{F}$ we report in Table \ref{table:2_bridge} its
\begin{itemize}
    \item name,
    \item $2$-bridge notation $p/q$,
    \item order in $\mathcal{C}$, denoted by $Ord$,
    \item maximal signature jump $\widetilde{J}$\footnote{see Lemma \ref{lemma:max_sign_jump}}.
\end{itemize}

The purpose of Table \ref{table:2_bridge} is to show that given a directed strongly invertible knot $K$, the equivariant signature jumps can be used to prove that the (topological) equivariant slice genus of $K^n$ grows linearly in $|n|$ even when the Levine-Tristram signature function vanishes.

Given a knot $K$ in the family $\mathcal{F}$, it is not possible to obtain a lower bound on the smooth or topologically slice genus of $n\cdot K$, $n\in\Z$ using the Levine-Tristram signatures, since $\sigma_\omega(K)\equiv 0$.

On the other hand, observe from Table \ref{table:2_bridge} that for most of the knots in $\mathcal{F}$, we have $\widetilde{J}(K)=1$. Therefore, we easily get the following lower bound on the topological equivariant slice genus of $K^n$
$$\widetilde{g}_4(K^n)\geq |n|/4,$$
by applying Theorem \ref{thm:genus_bound_jump}.

We rely on the following lemma in order to compute $\widetilde{J}(K)$ for $K$ in $\mathcal{F}$.

\begin{lemma}\label{lemma:max_sign_jump}
For any knot $K$ in $\mathcal{F}$ we have
$$
\widetilde{J}(K)=\begin{cases}
    0\quad\text{if}\quad\Delta_K(t)\quad\text{has no real root},\\
    1\quad\text{if}\quad\Delta_K(t)\quad\text{has at least one real root},
\end{cases}
$$
and in particular $\widetilde{J}(K)$ does not depend on the choice of strong inversion nor of direction on $K$.
\end{lemma}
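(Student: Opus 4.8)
The plan is to compute $\widetilde{J}(K)$ by reducing it to a statement about the real roots of $\Delta_K(t)$: the non-negative parameter range is killed by the vanishing of the Levine--Tristram signature, the positive range is controlled eigenspace-by-eigenspace via Proposition \ref{prop:jump_eigenspace} and the root correspondence of Remark \ref{remark:transf_roots}, and the only genuinely non-formal ingredient is a finite check on the knots in $\mathcal{F}$. Concretely, fix for $K$ an equivariant Seifert surface of type $0$, with equivariant Seifert form $(A,P)$ and associated symmetric structure $(V,\beta,S)$; by Remark \ref{rem:poly_transf} the characteristic polynomial of $S$ is $\delta_K(s)$ up to a nonzero rational scalar and a power of $s-1$, and since $\delta_K(0)=\Delta_K(1)=\pm1\neq 0$ the value $0$ is not an eigenvalue of $S$. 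Now since $K\in\mathcal{F}$ its averaged Levine--Tristram signature function vanishes identically, so by Proposition \ref{prop:levine_tristram} we get $\widetilde{\sigma}_\lambda(K)=\sigma_\omega(K)=0$ for every $\lambda<0$; as $\lambda\mapsto\sigma(A_\lambda)$ is locally constant away from the finitely many real eigenvalues of $S$ (Remark \ref{remark:signature_meaning}) and agrees with $\widetilde{\sigma}_\lambda(K)$ at its continuity points, it follows that $\sigma(A_\lambda)\equiv 0$ for all regular $\lambda\le 0$, hence $\widetilde{J}_\lambda(K)=0$ for all $\lambda\le 0$. Therefore $\widetilde{J}(K)=\sup_{\lambda>0,\ \lambda\neq 1}|\widetilde{J}_\lambda(K)|$.

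For $\lambda>0$, $\lambda\neq 1$, Proposition \ref{prop:jump_eigenspace} gives $\widetilde{J}_\lambda(K)=-\sigma(V^\lambda,\beta)$, which vanishes unless $\lambda$ is a real root of $\delta_K(s)$; by Remark \ref{remark:transf_roots} the positive real roots of $\delta_K(s)$ are exactly the numbers $\mu(\Lambda)$ for $\Lambda$ a real root of $\Delta_K(t)$, with $\mu(\Lambda)$ of the same multiplicity in $\delta_K$ as $\Lambda$ in $\Delta_K$, so that $\dim_{\R}V^{\mu(\Lambda)}$ equals that multiplicity (the factor $(s-1)$ only affects the eigenvalue $1$, which is excluded). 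If $\Delta_K(t)$ has no real root, then $\delta_K(s)$ has no positive real root, every $V^\lambda$ with $\lambda>0$ is trivial, and $\widetilde{J}(K)=0$. If $\Delta_K(t)$ has a real root, I will use that every such root is simple (this is the point deferred to the next paragraph): then for any $\lambda>0$, $\lambda\neq 1$, $V^\lambda$ is either trivial or a line, so $|\widetilde{J}_\lambda(K)|=|\sigma(V^\lambda,\beta)|\le\dim_{\R}V^\lambda\le 1$ and hence $\widetilde{J}(K)\le 1$; and for $\lambda_0=\mu(\Lambda)$ with $\Lambda$ a real root, $\beta$ restricts to a nondegenerate symmetric form on the line $V^{\lambda_0}$, so $\widetilde{J}_{\lambda_0}(K)=-\sigma(V^{\lambda_0},\beta)=\pm 1$ and $\widetilde{J}(K)\ge 1$. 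Hence $\widetilde{J}(K)=1$. Finally, the value obtained depends only on whether $\Delta_K(t)$ has a real root, which is an invariant of the underlying knot; and the whole argument applies verbatim to any directed strongly invertible structure on $K$, since $\sigma_\omega$ and $\Delta_K$ are intrinsic. Thus $\widetilde{J}(K)$ is independent of the choice of strong inversion and direction.

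The substantive point, and the one I expect to be the main obstacle, is the claim used above: for every $K\in\mathcal{F}$, every real root of $\Delta_K(t)$ is simple. This is where the hypotheses ``$2$-bridge'' and ``crossing number $\le 12$'' are essential, reducing the matter to a finite, explicitly listed family of knots. I would verify it directly from the Alexander polynomials recorded in \cite{knotinfo}: for each $K\in\mathcal{F}$, factor $\Delta_K(t)$ over $\Q$ and check that $\gcd\!\big(\Delta_K(t),\Delta_K'(t)\big)$ has no real root, equivalently that no irreducible factor of $\Delta_K$ possessing a real root is repeated. The computation is short, but it carries the weight of the lemma: the formal reductions above only establish that $\widetilde{J}(K)=0$ when $\Delta_K$ has no real root and that $\widetilde{J}(K)=1$ once $\Delta_K$ has a real root and all its real roots are simple.
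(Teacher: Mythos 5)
Your proposal is correct and follows essentially the same route as the paper: reduce $\widetilde{J}(K)$ to the real spectrum of $S$ via Proposition \ref{prop:jump_eigenspace} and Remark \ref{remark:transf_roots}, and rest the whole lemma on the finite verification that the relevant roots of $\Delta_K(t)$ are simple for $K\in\mathcal{F}$. The only (harmless) divergence is in killing the range $\lambda\le 0$: the paper uses simplicity of all roots plus $\sigma_\omega\equiv 0$ to rule out unit-circle roots of $\Delta_K$ (hence negative eigenvalues of $S$), whereas you invoke Proposition \ref{prop:levine_tristram} to see that $\sigma(A_\lambda)$ vanishes identically there and so has no jumps, which lets you get away with checking simplicity of the real roots only.
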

\begin{proof}
First of all we check that each root of $\Delta_K(t)$ has multiplicity one.
It follows that $\Delta_K(t)$ has no root $z\in S^1$, otherwise we would have a discontinuity of the signature function in $z$ and hence $\sigma_\omega(K)\neq 0$ for some $\omega\in S^1$.
Using Remark \ref{remark:transf_roots} and arguing as in the proof of Theorem \ref{thm:genus_bound_jump} one can see that in this case $\widetilde{J}(K)$ does not depend on the choice of strong inversion nor of direction on $K$, and that
$$
\widetilde{J}(K)=\begin{cases}
    0\quad\text{if}\quad\Delta_K(t)\quad\text{has no real root},\\
    1\quad\text{if}\quad\Delta_K(t)\quad\text{has at least one real root}.
\end{cases}
$$
\end{proof}

\begin{remark}\label{remark:2_bridge_not_square}
Observe that since each root of $\Delta_K(t)$ has multiplicity one for $K$ in $\mathcal{F}$, $\Delta_K(t)$ cannot be a square. Hence, using Theorem \ref{thm:fox_milnor} we recover the result in \cite{diprisa_framba}: none of the knots in $\mathcal{F}$ is equivariantly slice.
\end{remark}

\begin{remark}\label{remark:big_equiv_genus}
In \cite{dai_mallick_stoffregen} the authors provide the first examples of strongly invertible knots with smooth equivariant slice genus arbitrarily larger than their smooth slice genus.
Miller and Powell in \cite{miller_powell} find examples of knots on which the quantity
$
\widetilde{g}_4(K)-g_4(K)$
assume arbitrarily large values.

In the family $\mathcal{F}$ we can find knots $K$ with smooth and topological order $\leq 2$ and such that $\widetilde{J}(K)=1$ ($8_9$). Since the smooth and topological slice genus of $K^n$ is bounded, applying Theorem \ref{thm:genus_bound_jump} we get that both $
\widetilde{g}_4(K^n)-g_4(K^n)$
grows linearly in $|n|$ and hence they provide new examples of the phenomenon described above.
The knot $8_9$ is the simplest example of such knots in $\mathcal{F}$ that does not fall into the examples in \cite{dai_mallick_stoffregen,miller_powell}. 
\end{remark}

\newpage
\begin{table}[H]
    \centering
    \caption{}
    \label{table:2_bridge}

\begin{tabular}{c c}

\begin{tabular}{|c|c|c|c|}
Name& $p/q$ & $Ord$ & $\widetilde{J}$\\
\hline\hline
$4_1$   &   $5/2$   &   $2$ &   $1$\\\hline
$6_1$   &   $9/7$   &   $1$ &   $1$\\\hline
$6_3$   &   $13/5$   &   $2$ &   $0$\\\hline
$7_7$   &   $21/8$   &   $\infty$ &   $0$\\\hline
$8_1$   &   $13/11$   &   $\infty$ &   $1$\\\hline
$8_3$   &   $17/4$   &   $2$ &   $1$\\\hline
$8_8$   &   $25/9$   &   $1$ &   $0$\\\hline
$8_9$   &   $25/7$   &   $1$ &   $1$\\\hline
$8_{12}$   &   $29/12$   &   $2$ &   $1$\\\hline
$8_{13}$   &   $29/11$   &   $\infty$ &   $0$\\\hline
$9_{14}$   &   $37/14$   &   $\infty$ &   $0$\\\hline
$9_{19}$   &   $41/16$   &   $\infty$ &   $0$\\\hline
$9_{27}$   &   $49/19$   &   $1$ &   $1$\\\hline
$10_1$   &   $17/15$   &    $\infty$ &   $1$\\\hline
$10_3$   &   $25/6$   &   $1$ &   $1$\\\hline
$10_{10}$   &   $45/17$   &     $\infty$ &   $0$\\\hline
$10_{13}$   &   $53/22$   &     $\infty$ &   $1$\\\hline
$10_{17}$   &   $41/9$   &   $2$ &   $0$\\\hline
$10_{22}$   &   $49/36$   &    $1$ &   $1$\\\hline
$10_{26}$   &   $61/44$   &    $\infty$ &   $1$\\\hline
$10_{28}$   &   $53/19$   &     $\infty$ &   $0$\\\hline
$10_{31}$   &   $57/25$   &     $\infty$ &   $0$\\\hline
$10_{33}$   &   $65/18$   &    $2$ &   $0$\\\hline
$10_{34}$   &   $37/13$   &     $\infty$ &   $0$\\\hline
$10_{35}$   &   $49/20$   &    $1$ &   $1$\\\hline
$10_{37}$   &   $53/23$   &   $2$ &   $0$\\\hline
$10_{42}$   &   $81/31$   &   $1$ &   $1$\\\hline
$10_{43}$   &   $73/27$   &    $2$ &   $1$\\\hline
$10_{45}$   &   $89/34$   &    $2$ &   $1$\\\hline
$11a_{13}$   &   $61/28$   &     $\infty$ &   $0$\\\hline
$11a_{84}$   &   $101/57$   &   $\infty$ &   $1$\\\hline
$11a_{91}$   &   $129/50$   &     $\infty$ &   $1$\\\hline
$11a_{96}$   &   $121/50$   &    $1$ &   $1$\\\hline
$11a_{98}$   &   $77/18$   &    $\infty$ &   $0$\\\hline
$11a_{110}$   &   $97/35$   &     $\infty$ &   $1$\\\hline
$11a_{119}$   &   $77/34$   &     $\infty$ &   $0$\\\hline
$11a_{180}$   &   $89/64$   &     $\infty$ &   $0$\\\hline
$11a_{185}$   &   $109/30$   &     $\infty$ &   $1$\\\hline
$11a_{190}$   &   $85/67$   &    $\infty$ &   $1$\\\hline
$11a_{195}$   &   $53/8$   &     $\infty$ &   $0$\\\hline
$11a_{210}$   &   $73/16$   &   $\infty$ &   $0$\\\hline
$11a_{333}$   &   $65/14$   &    $\infty$ &   $0$\\\hline
$12a_{197}$   &   $69/32$   &    $\infty$ &   $1$\\\hline

\end{tabular}\qquad\qquad&\qquad\qquad

\begin{tabular}{|c|c|c|c|}
Name&$p/q$& $Ord$ & $\widetilde{J}$\\
\hline\hline
$12a_{204}$   &   $173/76$   &   $\infty$ &   $1$\\\hline
$12a_{221}$   &   $169/66$   &    $1$ &   $1$\\\hline
$12a_{243}$   &   $133/60$   &   $\infty$ &   $1$\\\hline
$12a_{303}$   &   $153/64$   &   $\infty$ &   $1$\\\hline
$12a_{307}$   &   $157/69$   &   $\infty$ &   $1$\\\hline
$12a_{385}$   &   $161/66$   &    $\infty$ &   $1$\\\hline
$12a_{425}$   &   $81/37$   &    $1$ &   $0$\\\hline
$12a_{437}$   &   $149/65$   &    $\infty$ &   $1$\\\hline
$12a_{447}$   &   $121/43$   &    $1$ &   $1$\\\hline
$12a_{471}$   &   $85/38$   &     $2$ &   $1$\\\hline
$12a_{477}$   &   $169/70$   &     $1$ &   $1$\\\hline
$12a_{482}$   &   $93/22$   &     $\infty$ &   $1$\\\hline
$12a_{497}$   &   $209/81$   &     $\infty$ &   $0$\\\hline
$12a_{499}$   &   $233/89$   &     $2$ &   $0$\\\hline
$12a_{506}$   &   $185/68$   &     $2$ &   $1$\\\hline
$12a_{510}$   &   $193/81$   &     $2$ &   $1$\\\hline
$12a_{518}$   &   $157/34$   &     $\infty$ &   $1$\\\hline
$12a_{550}$   &   $149/34$   &     $?$ &   $1$\\\hline
$12a_{583}$   &   $161/45$   &     $\infty$ &   $1$\\\hline
$12a_{585}$   &   $181/50$   &     $\infty$ &   $1$\\\hline
$12a_{596}$   &   $81/14$   &      $\infty$ &   $0$\\\hline
$12a_{644}$   &   $113/30$   &    $?$ &   $0$\\\hline
$12a_{650}$   &   $165/46$   &    $\infty$ &   $1$\\\hline
$12a_{690}$   &   $89/20$   &    $?$ &   $1$\\\hline
$12a_{691}$   &   $77/12$   &    $\infty$ &   $1$\\\hline
$12a_{715}$   &   $169/50$   &    $1$ &   $1$\\\hline
$12a_{744}$   &   $61/8$   &     $\infty$ &   $0$\\\hline
$12a_{774}$   &   $89/16$   &    $?$ &   $0$\\\hline
$12a_{792}$   &   $85/24$   &     $\infty$ &   $0$\\\hline
$12a_{803}$   &   $21/2$   &     $\infty$ &   $1$\\\hline
$12a_{1029}$   &   $81/19$   &     $1$ &   $0$\\\hline
$12a_{1034}$   &   $121/32$   &     $1$ &   $0$\\\hline
$12a_{1039}$   &   $137/37$   &     $2$ &   $1$\\\hline
$12a_{1127}$   &   $97/22$   &    $2$ &   $1$\\\hline
$12a_{1129}$   &   $105/23$   &      $\infty$ &   $0$\\\hline
$12a_{1139}$   &   $101/18$   &     $?$ &   $0$\\\hline
$12a_{1140}$   &   $97/18$   &      $?$ &   $1$\\\hline
$12a_{1166}$   &   $33/4$   &     $\infty$ &   $1$\\\hline
$12a_{1273}$   &   $61/11$   &     $2$ &   $1$\\\hline
$12a_{1275}$   &   $149/44$   &    $2$ &   $1$\\\hline
$12a_{1277}$   &   $121/37$   &      $1$ &   $1$\\\hline
$12a_{1281}$   &   $109/33$   &    $2$ &   $1$\\\hline
$12a_{1287}$   &   $37/6$   &    $2$ &   $1$\\\hline

\end{tabular}
\end{tabular}

\end{table}

\section*{Acknowledgements}
I am very grateful to the referees for their suggestions to improve the results presented in earlier drafts and for their useful comments.
I am thankful to Makoto Sakuma for his kind and encouraging comments about this work.

Additionally, I want to thank the Italian National Group for Algebraic and Geometric Structures and their Applications (INdAM-GNSAGA).
\bibliographystyle{alpha} 
\bibliography{refs} 

\end{document}